\author[I.~Kapovich]{Ilya Kapovich}
\address{\tt Department of Mathematics, University of Illinois at
  Urbana-Champaign, 1409 West Green Street, Urbana, IL 61801, USA
  \newline http://www.math.uiuc.edu/\~{}kapovich/} \email{\tt
  kapovich@math.uiuc.edu}
\title[Detecting fully irreducible automorphisms]{Detecting fully
  irreducible automorphisms: a polynomial time algorithm. \emph{With an appendix by Mark C. Bell.}}
\newtheorem{thm}{Theorem}[section] \newtheorem{lem}[thm]{Lemma}
\newtheorem{cor}[thm]{Corollary} 
\newtheorem{prop}[thm]{Proposition} \theoremstyle{definition}
\newtheorem{defn}[thm]{Definition}
\newtheorem{conv}[thm]{Convention} \newtheorem{rem}[thm]{Remark}
\newtheorem{propdfn}[thm]{Proposition-Definition}
\def\strutdepth{\dp\strutbox}
\def \ss{\strut\vadjust{\kern-\strutdepth \sss}}
\def \sss{\vtop to \strutdepth{
\baselineskip\strutdepth\vss\llap{$\diamondsuit\;\;$}\null}}
\def\strutdepth{\dp\strutbox}
\def \sst{\strut\vadjust{\kern-\strutdepth \ssss}}
\def \ssss{\vtop to \strutdepth{
\baselineskip\strutdepth\vss\llap{$\spadesuit\;\;$}\null}}
\def\strutdepth{\dp\strutbox}
\def \ssh{\strut\vadjust{\kern-\strutdepth \sssh}}
\def \sssh{\vtop to \strutdepth{
\baselineskip\strutdepth\vss\llap{$\heartsuit\;\;$}\null}}
\def\epsilon{\varepsilon}
\def\phi{\varphi}
\def\Pr{\mathbb P}
\def\hat{\widehat}
\newcommand{\Curr}{\mbox{Curr}}
\DeclareMathOperator{\Out}{Out}
\newcommand{\Aut}{\mbox{Aut}}
\newcommand{\FN}{F_N}   
\newcommand{\CVNbar}{\overline{\mbox{CV}}_N}
\newcommand{\PCurr}{\Pr\Curr(\FN)}
\newcommand{\inlineQED}{\pushQED{\qed} \qedhere \popQED}
\newcommand{\NN}{\mathbb{N}}
\newcommand{\defeq}{\colonequals}
\DeclareMathOperator{\poly}{poly}
\DeclareMathOperator{\PF}{PF}
\newcommand{\lamPF}{\lambda_{\textrm{PF}}}
\newenvironment{absolutelynopagebreak}
  {\par\nobreak\vfil\penalty0\vfilneg
     \vtop\bgroup}
  {\par\xdef\tpd{\the\prevdepth}\egroup
     \prevdepth=\tpd}
\begin{document}

\begin{abstract}
In \cite{Ka14} we produced an algorithm for deciding whether or not an
element $\phi\in \Out(F_N)$ is an iwip (``fully irreducible")
automorphism. At several points that algorithm was rather inefficient
as it involved some general enumeration procedures as well as running
several abstract processes in parallel. In this paper we refine the
algorithm from \cite{Ka14} by eliminating these inefficient features,
and also by eliminating any use of mapping class groups
algorithms. 

Our main result is to produce, for any fixed $N\ge 3$, an algorithm which, given a topological representative $f$ of an element $\phi$ of $\Out(F_N)$, decides in polynomial time  in terms of the ``size'' of $f$, whether or not $\phi$ is fully irreducible.

In addition, we provide a train track criterion of being fully
irreducible which covers all fully irreducible elements of
$\Out(F_N)$, including both atoroidal and non-atoroidal ones. 

We also give an algorithm, alternative to that of Turner, for finding all the
indivisible Nielsen paths in an expanding train track map, and
estimate the complexity of this algorithm. 

An appendix by Mark Bell
provides a polynomial upper bound, in terms of the size of the
topological representative, on the complexity of the Bestvina-Handel
algorithm\cite{BH92} for finding either an irreducible train track
representative or a topological reduction.

\end{abstract}

\thanks{The author was supported by the NSF
  grant DMS-1405146}

\subjclass[2010]{Primary 20F65, Secondary 57M, 37B, 37D}

\maketitle

\section{Introduction}

For an integer   $N\ge 2$ an outer automorphism $\phi\in\Out(F_N)$ is called \emph{fully irreducible} if there no positive power of $\phi$ preserves the conjugacy class of any proper free factor of $F_N$.  The notion of being fully irreducible serves as the main $\Out(F_N)$ counterpart of the notion of a pseudo-Anosov mapping class of finite type surface. Fully irreducible automorphisms are crucial in the study of of dynamics and geometry of $\Out(F_N)$, and understanding the structural properties of its elements and of its subgroups.  If $\phi\in\Out(F_N)$ is fully irreducible, then $\phi$ acts with with North-South dynamics on the compactified Outer space $\CVNbar$~\cite{LL}, and, under the extra assumption of $\phi$ being atoroidal, $\phi$ also acts with North-South dynamics on the projectivized space of geodesic currents $\PCurr$~\cite{Mar} (see also \cite{U}).  By a result of Bestvina and Feighn~\cite{BF14}, $\phi\in \Out(F_N)$ acts as a loxodromic isometry of the free factor complex of $F_N$ if and only if  $\phi$ is fully irreducible. The case where a subgroup $H\le \Out(F_N)$ contains some fully irreducible is the first case to be treated in the solution of the Tits Alternative for $\Out(F_N)$, see~\cite{BFH97,BFH00,BFH05}. The modern subgroup structure theory of $\Out(F_N)$ also involves this setting. Thus a result of Handel and Mosher~\cite{HM} shows that if $H\le \Out(F_N)$ is a finitely generated subgroup then either $H$ contains a fully irreducible element or $H$ contains a subgroup $H_0$ of finite index in $H$, such that $H_0$ leaves invariant a proper free factor of $F_N$. Subsequently, Horbez~\cite{Hor} proved, by a very different  argument, that this  result remains true with the assumption that $H$ be finitely generated dropped.  See \cite{CH12,Gui1,KL4} for other examples illustrating the important role that fully irreducibles play in the study of $\Out(F_N)$. 

In \cite{Ka14} we produced an algorithm that, given $\phi\in\Out(F_N)$, decided whether or not $\phi$ is fully irreducible. That algorithm did not include a complexity estimate and, as written, was computationally quite inefficient.  There were several places in the algorithm were two general enumeration procedures were run in parallel, and in one subcase the algorithm also invoked an algorithm from surface theory as a ``black box". 

In the present paper we remedy this situation and produce a refined algorithm for deciding whether or not an element $\phi$ of $\Out(F_N)$ is fully irreducible. This improved algorithm runs in polynomial time in terms of the ``size" of $\phi$ or of a topological representative of $\phi$. The algorithm also does not use any surface theory algorithms as subroutines.  The main result is stated in Theorem~\ref{thm:A} below. The term ``standard" for a topological representative $f'$ of $\phi$ is defined in Section~\ref{sec:b}, but basically it just means that we view $f'$ as a combinatorial object for algorithmic purposes, as is traditionally done in train track literature.  

Given a finite graph $\Delta$, and graph map $g:\Delta\to\Delta$, we
denote $||g||:=\max_{e\in E\Delta} |g(e)|$, where $|g(e)|$ is the combinatorial length of the edge-path $g(e)$. 
Let $N\ge 2$ and let $A=\{a_1,\dots, a_N\}$ be a free basis of $F_N$. Let $\Phi\in \Aut(F_N)$. Denote by $|\Phi|_A:=\max_{i=1}^N |\Phi(a_i)|_A$. 

The main result  of this paper is:

\begin{thm}\label{thm:A}
Let $N\ge 2$ be fixed.

\begin{enumerate}

\item There exists an algorithm that, given a standard topological
  representative $f':\Gamma'\to \Gamma'$ of some $\phi\in \Out(F_N)$,
  such that every vertex in $\Gamma$ has degree $\ge 3$, decides
  whether or not $\phi$ is fully irreducible.  The algorithm terminates in polynomial time in terms of $||f'||$. 

\item Let $A=\{a_1,\dots,a_N\}$ be a fixed free basis of $F_N$. There exists an algorithm, that given $\Phi\in \Aut(F_N)$ (where $\Phi$ is given as an $N$-tuple of freely reduced words over $A^{\pm 1}$, $(\Phi(a_1),\dots, \Phi(a_N))$), decides, in polynomial time in $|\Phi|_A$, whether or not $\Phi$ is fully irreducible.

\item Let $S$ be a finite generating set for $\Out(F_N)$. There exists
  an algorithm that, given a word $w$ of length
  $n$ over $S^{\pm 1}$, decides whether or not the element $\phi$ of
  $\Out(F_N)$ represented by $w$ is fully irreducible. This algorithm terminates in at most exponential time in terms of the length
of the word $w$.

\end{enumerate}

\end{thm}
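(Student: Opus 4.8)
The plan is to reduce parts (2) and (3) to part (1), and to establish part (1) by combining the polynomial-time Bestvina--Handel algorithm (the Appendix) with the train track criterion for full irreducibility and the polynomial-time procedure for locating indivisible Nielsen paths developed in the subsequent sections.

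\emph{Reductions.} Given $\Phi\in\Aut(F_N)$ presented as an $N$-tuple of freely reduced words over $A^{\pm1}$, I would form the standard topological representative $f\colon R_N\to R_N$ of $\phi=[\Phi]$ on the $N$-petal rose, sending the $i$-th petal to the reduced edge-path labelled $\Phi(a_i)$. Then $\|f\|=|\Phi|_A$, the unique vertex of $R_N$ has degree $2N\ge 4$, and part (1) applies verbatim; this gives part (2). For part (3), fix once and for all lifts of the elements of $S$ to $\Aut(F_N)$; given $w=s_1\cdots s_n$ over $S^{\pm1}$, I would compute the $N$-tuple of words representing $\Phi_w:=\tilde s_1\circ\cdots\circ\tilde s_n$ by iterated substitution. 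Each step multiplies word lengths by at most a fixed constant $C=C(S)$, so $|\Phi_w|_A\le C^n$ and the tuple is produced in time exponential in $n$; feeding it into the algorithm of part (2) then runs in time polynomial in $C^n$, i.e.\ at most exponential in $n$. Since full irreducibility depends only on the outer class, the choice of lifts is irrelevant.

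\emph{Proof of part (1).} I would first run the Bestvina--Handel algorithm on $f'$; by the Appendix this terminates in time polynomial in $\|f'\|$ and returns either a certificate that $\phi$ is reducible, or an irreducible train track representative $f\colon\Gamma\to\Gamma$ of $\phi$. In the first case $\phi$ preserves a proper free factor system, so $\phi=\phi^1$ is not fully irreducible and the algorithm answers ``no''. In the second case I would compute the Perron--Frobenius eigenvalue $\lamPF$ of the transition matrix of $f$; if $\lamPF=1$ then $\phi$ is not exponentially growing, hence not fully irreducible (fully irreducible automorphisms grow exponentially), and again the answer is ``no''. Otherwise $f$ is an expanding irreducible train track representative of $\phi$; I would stabilize it if the criterion requires it (a further polynomial-time procedure) and replace $\phi$ by $\phi^k$, where $k=k(N)$ is the uniform rotationless exponent. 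Since $N$ is fixed, $k$ is a constant, so $\|f^k\|\le\|f\|^k$ remains polynomial in $\|f'\|$, while $\phi$ is fully irreducible iff $\phi^k$ is.

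Finally I would apply the train track criterion for full irreducibility proved in this paper (refining the one in \cite{Ka14}): it characterizes full irreducibility of the rotationless $\phi^k$ in terms of (i) connectivity of the stable Whitehead graphs of $f^k$ --- equivalently, that the attracting lamination of $\phi$ is filling --- and (ii) a condition on the indivisible Nielsen paths of $f^k$ that treats the atoroidal and the non-atoroidal (geometric) cases uniformly, so that no mapping class group algorithm is invoked. The Whitehead graphs have size polynomial in $\|f^k\|$ and their connectivity is decided in linear time; all indivisible Nielsen paths of $f^k$ are produced by the algorithm of this paper in time polynomial in $\|f^k\|$, after which the structural checks in (ii) are routine. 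Running these finitely many polynomial-time steps in sequence proves part (1), and hence the theorem. I expect the main difficulty to be concentrated in this last stage: proving that the stated criterion is correct and, above all, \emph{complete} --- that it detects the loss of full irreducibility hidden in a reducible power of an irreducible automorphism without any surface-theoretic input --- together with the a priori polynomial bound on the number and lengths of the indivisible Nielsen paths that underlies the efficiency of step (ii).
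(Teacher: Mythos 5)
Your proposal follows the paper's proof in essentially every step it makes explicit: the reductions of (2) and (3) to (1) via the rose representative are exactly the ones used in the paper, and part (1) is proved there by running Bestvina--Handel (polynomial by the appendix, with $||f||$ polynomially bounded in $||f'||$ by Remark~\ref{rem:poly}), discarding the reducible and non-expanding cases, and then combining the Whitehead-graph check with a pINP-based test. The one piece you leave as a black box --- your condition (ii) on the indivisible Nielsen paths --- is precisely where the paper's content sits: the criterion (Theorem~\ref{thm:clean}) states that $\phi$ is fully irreducible iff $\phi$ is \emph{primitively atoroidal} and some (equivalently, every) train track representative is weakly clean, i.e.\ has all Whitehead graphs connected; and primitive atoroidality is decided (Proposition~\ref{prop:pa}) by assembling the pINPs into the graph $S(f)$ of periodic Nielsen paths, computing the subgroups $U_i=\mu_{*}(\pi_1(Q_i))$ carried by its non-contractible components via Stallings folding, and testing whether their images in $H_1(F_N;\mathbb{Z}/2\mathbb{Z})$ vanish (Corollary~13.3 of Feighn--Handel); this is how the non-atoroidal (surface) case is handled with no mapping class group input. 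Two minor points of divergence: the paper never passes to a rotationless power of $\phi$ in the main loop (the uniform Feighn--Handel exponent enters only inside the pINP-finding step, reducing periodic INPs of $f$ to INPs of a fixed power $f^t$), and the non-expanding case is detected by checking whether $M(f)$ is a permutation matrix rather than by computing $\lambda_{\mathrm{PF}}$, though for an irreducible integer matrix these tests are equivalent.
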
 

After our paper \cite{Ka14}, Clay, Mangahas and Pettet~\cite{CMP} produced a different algorithm for deciding whether an element of $\phi\in \Out(F_N)$ is fully irreducible. Their algorithm was more efficient than that in \cite{Ka14}, but did not include an explicit complexity estimate. A key step in their approach was to reduce to a subcase where one can bound the size of the Stallings subgroup graph of a periodic free factor $B$ of $\Phi$ by a polynomial in terms of $|\Phi|_A$. They then enumerate \emph{all} Stallings subgroup graphs of possible free factors with that size bound and check if any of them are $\phi$-periodic. This approach produces, at best an exponential time algorithm in terms of the size of $\phi$ or of a topological representative of $\phi$ (in the setting of parts (1) and (2) of Theorem~\ref{thm:A} above), and at  best a double exponential algorithm in terms of the word length of $\phi$ in the generators of $\Out(F_N)$ (in the setting of part (3) Theorem~\ref{thm:A}).
Feighn and Handel also gave an alternate algorithm in \cite{FH} for deciding if an element of $\Out(F_N)$ is fully irreducible. They use their general extremely powerful machinery of CT train tracks and algorithmic computability of CT train tracks for rotationless elements of $\Out(F_N)$.  Their proof does not come with a complexity estimate and it remains unclear if a reasonable complexity estimate can be extracted from that proof. 

A key fact that goes into the proof of Theorem~\ref{thm:A} is Theorem~\ref{thm:clean} providing a train track characterization of being fully irreducible that covers all elements of $\Out(F_N)$  (both atoroidal and non-atoroidal ones).  See \cite[Lemma~9.9]{Pf}, \cite[Proposition~4.1]{Pf15}, \cite[Proposition~5.1]{JL},  \cite[Proposition~4.4]{Ka14} for related earlier train track criteria of being fully irreducible, in more restricted settings.  We include the statement of Theorem~\ref{thm:clean} here because of its potential further utility:

\begin{thm}\label{thm:clean'}
Let $N\ge 2$ and let $\phi\in \Out(F_N)$ be arbitrary.
Then the following are equivalent:

\begin{enumerate}
\item The automorphism $\phi$ is fully irreducible.
\item The automorphism $\phi$ is primitively atoroidal and there exists a weakly clean train track representative $f:\Gamma\to\Gamma$ of $\phi$.
\item The automorphism $\phi$ is primitively atoroidal, there exists a weakly clean train track representative of $\phi$, and every train track representative of $\phi$ is weakly clean.
\end{enumerate}
 
\end{thm}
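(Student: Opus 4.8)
The plan is to prove Theorem~\ref{thm:clean'} by establishing the cyclic chain of implications (1)$\Rightarrow$(3)$\Rightarrow$(2)$\Rightarrow$(1), with the bulk of the work going into (1)$\Rightarrow$(3). First I would recall the definitions of \emph{primitively atoroidal} and \emph{weakly clean} (presumably: $\phi$ has no periodic primitive conjugacy class, and a train track representative $f:\Gamma\to\Gamma$ admits no stable/unstable lamination leaf segment which forms a closed indivisible Nielsen path in a way that reveals an invariant free factor; the precise formulation from Section~\ref{sec:b} will be used). The implication (3)$\Rightarrow$(2) is immediate since (3) asserts existence of a weakly clean train track representative as one of its clauses, and (2)$\Rightarrow$(1) should follow from the train track criteria already in the literature cited in the paragraph above (\cite[Lemma~9.9]{Pf}, \cite[Proposition~4.1]{Pf15}, \cite[Proposition~5.1]{JL}, \cite[Proposition~4.4]{Ka14}), suitably combined: weak cleanness of a train track representative rules out invariant free factors visible through proper invariant subgraphs and through Nielsen-path-induced splittings, and primitive atoroidality handles the toroidal obstruction, so together they force full irreducibility.

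The heart of the argument is (1)$\Rightarrow$(3). Assume $\phi$ is fully irreducible. Then $\phi$ is in particular irreducible with irreducible powers, so by Bestvina--Handel $\phi$ admits a train track representative; moreover every irreducible representative is a train track representative after the standard argument (no cancellation can persist under iteration for an irreducible map on a minimal graph). Primitive atoroidality of $\phi$ follows because a periodic primitive conjugacy class would be carried by a cyclic, hence rank-one, hence proper free factor that is periodic, contradicting full irreducibility; so that clause is free. The real content is showing that \emph{every} train track representative $f:\Gamma\to\Gamma$ of a fully irreducible $\phi$ is weakly clean. Here I would argue by contradiction: if some train track representative $f$ fails to be weakly clean, the failure manifests as a specific combinatorial configuration — either a proper $f$-invariant subgraph carrying a free factor, or an indivisible Nielsen path whose associated fold/blow-up produces a proper invariant free factor of $F_N$. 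In either case one extracts a proper free factor of $F_N$ whose conjugacy class is fixed by a positive power of $\phi$, directly contradicting full irreducibility. The key technical tool will be the analysis of indivisible Nielsen paths (INPs) and the induced free splittings, connecting to the INP-detection results discussed elsewhere in the paper.

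The main obstacle I anticipate is the INP/Nielsen-path side of weak cleanness: translating the existence of a ``bad'' Nielsen path in an arbitrary train track representative into an honest proper $\phi$-periodic free factor. This requires care because not every Nielsen path yields an invariant free factor — one needs the precise ``weakly clean'' hypothesis to be engineered so that its negation does produce such a factor, typically via the theory of relative train tracks, the associated $\mathbb{Z}$-splittings or free splittings dual to the Nielsen path, and a finiteness/periodicity argument showing the resulting free factor (or its conjugacy class) is genuinely $\phi$-periodic. Controlling the non-atoroidal case is the subtle part: here $\phi$ may fix a conjugacy class of an element of $F_N$, and one must ensure this does not spuriously manufacture an invariant proper free factor while still ruling out the genuinely reducible configurations. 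I expect the bookkeeping around which Nielsen paths are ``legal'' to leave intact versus which force reducibility — and verifying this is independent of the chosen train track representative — to be where the proof spends most of its effort.
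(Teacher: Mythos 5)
Your cyclic scheme (1)$\Rightarrow$(3)$\Rightarrow$(2)$\Rightarrow$(1) matches the paper's, but you have located the work in the wrong place, and the direction that carries all the content, (2)$\Rightarrow$(1), is left with a genuine gap. You propose to get (2)$\Rightarrow$(1) by ``suitably combining'' the earlier criteria of \cite{Pf,Pf15,JL} and \cite[Proposition~4.4]{Ka14}; but those criteria hold only in more restricted settings (essentially atoroidal $\phi$, or extra hypotheses on the representative), and the entire point of Theorem~\ref{thm:clean} is to cover non-atoroidal fully irreducibles as well, so in the case where $\phi$ has a periodic conjugacy class there is nothing available to combine. The paper's actual argument is concrete and different: weakly clean implies clean by Proposition~\ref{prop:DKL}; assuming $\phi$ is not fully irreducible, one takes a $\phi$-periodic proper free factor $H$ of \emph{minimal rank}; primitive atoroidality rules out $rank(H)=1$, and minimality gives a power of $\phi$ restricting to a fully irreducible automorphism of $H$, hence an element $h\in H$ whose conjugacy class is not $\phi$-periodic; the cyclically tightened iterates $f^n(\gamma)$ of a circuit representing $[h]$ grow, they all lift to the Stallings core $\Delta_H$, and splitting them into boundedly many subsegments of paths $f^n(e)$ plus a compactness argument shows $\Delta_H$ carries a leaf of $\Lambda(f)$, contradicting Proposition~\ref{prop:carry}. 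None of these steps (the minimal-rank trick, the use of primitive atoroidality to exclude the rank-one factor, the lamination-carrying contradiction) appears in your sketch, so the implication remains unproved as written.

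A secondary but consequential issue: your working guess for ``weakly clean'' is incorrect. It means that $M(f)$ is irreducible with $\lambda(f)>1$ and that every Whitehead graph $Wh_\Gamma(v,f)$ is connected; it involves no Nielsen-path condition. As a result, the ``main obstacle'' you anticipate in (1)$\Rightarrow$(3) --- converting a bad Nielsen path in an arbitrary representative into a periodic free factor --- does not arise: in the paper (1)$\Rightarrow$(3) is the quick direction, handled by Remark~\ref{rem:BH92} (fully irreducible implies primitively atoroidal) together with the Whitehead-graph argument of \cite[Proposition~4.4]{Ka14} applied verbatim to any train track representative, while (3)$\Rightarrow$(2) is formal. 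You should redirect the effort you allotted to Nielsen-path bookkeeping into the lamination argument for (2)$\Rightarrow$(1) sketched above.
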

The term \emph{weakly clean train track representative} is formally defined in Section~\ref{Sec:clean}. An element $\phi\in \Out(F_N)$ is called \emph{primitively atoroidal} if there do not exist $n\ne 0$ and a primitive element $g\in F_N$ such that $\phi^n[g]=[g]$.  This notion, or rather its negation, first appeared in \cite{CMP}, where non primitively atoroidal elements of $\Out(F_N)$ are called \emph{cyclically reducible}. Primitively atoroidal elements of $\Out(F_N)$ are also specifically studied in \cite{FH}. 

As a step in the proof of Theorem~\ref{thm:A} we produce an algorithm, with a polynomial time complexity estimate, that given an expanding standard train track map $f$, finds all the INPs (if any)  for $f$; see Theorem~\ref{thm:alg-inp}.  We then refine this algorithm to produce a corresponding statement for finding all periodic INPs, see Theorem~\ref{thm:turner} below.
The first (and until now the only) known algorithm for finding INPs in an expanding train track map was due to Turner~\cite{Turner}, and did not contain a complexity estimate.\footnote{There are some issues with Turner's result requiring further clarification. The ``expanding" assumption is missing from the statement of his theorem, and the category of graph maps in which he works is not sufficiently precisely defined, including the precise meaning of a ``train track map" of what exactly it means to ``compute" and INP and its endpoints algorithmically.} 
Our algorithm provided here is based on fairly different considerations from those of Turner. The main idea is to use precise quantitative estimates of bounded cancellation constants to get efficient control of the length of the two ``legs" of an INP.  This approach turns out to be sufficient to obtain a polynomial time algorithm for finding INPs and periodic INPs.

The algorithm given in Theorem~\ref{thm:A} needs another component, namely the classic Bestvina-Handel algorithm~\cite{BH92} for trying to find a train track representative of a free group automorphism. The Bestvina-Handel algorithm is one of the most useful results in the study of $\Out(F_N)$, both as a theoretical and as an experimental tool. Yet, amazingly enough, until now the computational complexity of this algorithm has not been analyzed. The appendix to the present paper, written by Mark Bell, fills that gap.  Theorem~\ref{thm:BH92} of the appendix provides a polynomial time estimate, in terms of the size $||f||$ of a given topological representative of an element of $\Out(F_N)$ (where $N\ge 2$ is fixed), on the running time of the Bestvina-Handel algorithm.

The most complicated part of the algorithm given in Theorem~\ref{thm:A} consists in finding all the pINPs in a expanding irreducible train-track representative $f:\Gamma\to\Gamma$ for $\phi$ produced by the Bestvina-Handel algorithm, and then processing this data to decide whether a) $\phi$ is not primitively atoroidal (in which case we conclude that $\phi$ is not fully irreducible and the algorithm stops), or b) $\phi$ is not fully irreducible for some other reason, or c) $\phi$ is primitively atoroidal (in which case we then use Theorem~\ref{thm:clean'} above). 
It is now known, for geometric reasons, that under fairly mild assumptions on the support of a measure defining a random walk, long random walks on $\Out(F_N)$ produce elements that are atoroidal and fully irreducible, with probability tending to $1$ as the length of the walk tends to $\infty$. See \cite{MT} as the main general result about random walks on groups acting on hyperbolic spaces, together with \cite{DT} for the relevant action of $\Out(F_N)$ on the ``co-surface graph". It is also known~\cite{KP} that a special type of a random walk on $\Out(F_N)$ (where $N\ge 3$), with asymptotically positive probability produces an element of $\Out(F_N)$  which is atoroidal, fully irreducible and is represented by an expanding irreducible train track map with no pINPs. Moreover, experimental data using Thierry Coulbois' train track package~\cite{Col} suggests that a simple random walk on $\Out(F_N)$  generically produces an automorphisms that is represented by an expanded irreducible train track map with no pINPs. 
Thus it is likely that for processing ``random" inputs, the more complicated parts of the algorithm in Theorem~\ref{thm:A} dealing with manipulating pINPs are rarely needed. It would be extremely interesting to obtain an actual  proof of the above suggested absence of pINPs for automorphisms generically produced by  a simple random walk on  $\Out(F_N)$. In particular, such a result would shed light on the index properties of random elements of $\Out(F_N)$ (see \cite{CH12} for a survey of the $\Out(F_N)$ index theory). 

We thank the referees of this paper for helpful comments. We are particularly grateful to one of the referees for pointing out a simpler and more efficient proof of Proposition~\ref{prop:pa} below than our original argument. Moreover, our original proof of that proposition did not include a complexity estimate, while the argument suggested by the referee produced a polynomial time complexity estimate in the conclusion of Proposition~\ref{prop:pa}. 
This change also resulted in a simplification of the proof of our main result, Theorem~\ref{thm:A}.

\section{Background}\label{sec:b}

We adopt the same definitions, terminology, and notations regarding graph maps and
train track maps as in \cite{Ka14} and \cite{DKL}. Although this point
is not made explicit in \cite{Ka14} (because it is not directly
relevant there), we now explicitly assume that all the graphs under
considerations are equipped with a PL structure (that is, in the
terminology of \cite{DKL} they are \emph{linear graphs}) and that all graph maps and
train track maps under considerations are PL-maps, or, more precisely,
that in the terminology of \cite{DKL} they are \emph{linear graph
  maps}. Section~2 of \cite{DKL} discusses in detail the notion of a
linear graph map and a more general notion of a ``topological graph
map'', where for the the latter one does not impose any restrictions
on the nature of homeomorphisms by which subdivision intervals in an
edge get mapped to the edges of the graph. For many combinatorial
considerations in train track theory the distinction between
topological graph maps and linear graph maps is immaterial. This
distinction is almost never discussed in the train track literature. The
various papers on this subject almost never explain what they actually
mean by saying
that a graph map maps an edge to an edge-path (although they typically
implicitly assume that they are working in the PL category, that is,
they are working with linear graph maps). The distinction between
topological graph maps and linear graph maps does matter when Nielsen
paths and periodic points of train track maps are discussed. That
is why we adopt the convention of working only with linear graph maps
here. (Indeed, in the category of topological graph maps an expanding
irreducible train track map may have uncountably many periodic and
fixed points and may exhibit other pathologies). We refer the reader
to Section~2 of \cite{DKL} for an extended discussion on the topic.

\begin{defn}[Standard graph map]
We say that a graph map $f:\Gamma\to\Gamma$ is \emph{standard} if
$\Gamma$ is equipped with a linear structure identifying every open
edge $e\in E\Gamma$ with an interval $(0,1)$ via a map $\xi_e: e\to (0,1)$ with the following property. If $f(e)=e_1\dots e_n$,
then for each open subinterval $J_i$ of $e$ that $f$ maps to $e_i$
(where $i=1,\dots, n$) we have
$\xi_e(J_i)=(\frac{i-1}{n},\frac{i}{n})$ and, moreover, $\xi_{e_i}\circ
f\circ \xi_e^{-1}$ maps $(\frac{i-1}{n},\frac{i}{n})$ by an affine
orientation-preserving bijection to the interval $(0,1)$. 
\end{defn}

\begin{conv}
We will always assume (unless explicitly stated otherwise) that various graph
maps are linear graph maps. Most abstract definitions and results will be stated in that context. 

For algorithmic results we will assume that our graph maps are
standard graph maps, and this assumption will always be mentioned explicitly.
This assumption is not burdensome since every
graph map $f:\Gamma\to\Gamma$ is isotopic relative the vertices
of $\Gamma$ to a standard graph map $g:\Gamma\to\Gamma$. Note however, that in this situation it need not
be the case that $f^n:\Gamma\to\Gamma$ is standard (where $n\ge
1$). Therefore some care will need to be taken when working with
powers of standard graph maps, but for most combinatorial considerations these
issues will be immaterial.
\end{conv}

We also adopt the definitions of \cite{DKL} regarding
the notion of a \emph{path} in a graph $\Gamma$. We assume that all
paths under considerations are PL-paths, in the terminology of Section
2 of \cite{DKL}. In particular, every path $\gamma$ in a graph $\Gamma$ has an
initial point $o(\gamma)$ and a terminal point $t(\gamma)$. The points
$o(\gamma)$, $t(\gamma)$ are not required to be vertices of
$\Gamma$. For a path $\gamma$ in $\Gamma$ we denote by $\gamma^{-1}$
the inverse path of $\gamma$, so that $o(\gamma^{-1})=t(\gamma)$ and $t(\gamma^{-1})=o(\gamma)$.
A path $\gamma$ is \emph{nontrivial} if $o(\gamma)\ne t(\gamma)$.  Note that an essential closed path in $\Gamma$ is necessarily nontrivial.

For an edge-path $\gamma$ in a graph $\Gamma$ we denote by $|\gamma|$
the combinatorial length, that is the number of edges, in $\gamma$.

For any path $\alpha$ in a graph $\Gamma$ there exists a
unique smallest edge-path $\hat\alpha$ containing $\alpha$ as a
subpath. We call $\hat\alpha$ the \emph{simplicial support} of
$\alpha$ and denote $|\alpha|:=|\hat\alpha|$.

All of the train track  maps  in this paper are assumed to be homotopy
equivalences (although it turns out that dropping this requirement
sometimes does lead to useful results, see~\cite{AR,DKL1,DKL2,Rey}.

Recall that a train track map $f:\Gamma\to\Gamma$ is called
\emph{irreducible} if the transition matrix $M(f)$ is irreducible,
that is, if for every position $i,j$ there exists $n\ge 1$
such that $(M(f)^n)_{ij}>0$. Equivalently, a train track map
$f:\Gamma\to\Gamma$ is irreducible if and only if for every (oriented)
edges $e,e'$ of $\Gamma$ there exists $n\ge 1$ such that the path
$f^n(e)$ contains an occurrence of $e'$ or of $(e')^{-1}$.  Recall
also that a graph map $f:\Gamma\to\Gamma$ is called
\emph{expanding} if for every edge $e$ of $\Gamma$ the combinatorial
length $|f^n(e)|$ of the path $f^n(e)$ goes to infinity as
$n\to\infty$. For an irreducible train-track map $f:\Gamma\to\Gamma$
being expanding is equivalent to the condition $\lambda(f)>1$, where
$\lambda(f)$ is the Perron-Frobenius eigenvalue of the matrix $M(f)$.

We recall the following key notions from train track theory:

\begin{defn}[Nielsen paths]

Let $f:\Gamma\to\Gamma$ be an expanding train map.

\begin{enumerate}
\item A point $x\in \Gamma$ is \emph{$f$-periodic} or just
  \emph{periodic} if there exists $n\ge 1$ such that $f^n(x)=x$. For a
  periodic point $x\in \Gamma$, the
  smallest $n\ge 1$ such that $f^n(x)=x$ is called the \emph{period}
  of $x$.
\item A nontrivial tight path $\gamma$ in $\Gamma$ is called a \emph{Nielsen path} for
  $f$ if $f(\gamma)$ is homotopic to $\gamma$ relative the endpoints
  of $\gamma$ (which implies that the endpoints of $\gamma$ are fixed
  by $f$). 
\item A nontrivial tight path $\gamma$ in $\Gamma$ is called a \emph{periodic Nielsen
    path} for $f$ if there exists $n\ge 1$ such that $\gamma$ is a
  Nielsen path for $f^n$; in this case the smallest such $n\ge 1$ is
  called the \emph{period} of $\gamma$. (Thus if $\gamma$ is a
  periodic Nielsen path of period $n$ then each of the endpoints of
  $\gamma$ is periodic of period $\le n$ and is fixed by $f^n$).
\item A \emph{periodic indivisible Nielsen path} (or pINP) is a
  periodic Nielsen path which cannot be written as a concatenation of two
  nontrivial periodic Nielsen paths.
\item An \emph{indivisible Nielsen path} (or INP) is a
  Nielsen path which cannot be written as a concatenation of two
  nontrivial Nielsen paths.
\end{enumerate}

\end{defn}

We recall the following key facts (see \cite{BH92,BFH97} for background info):

\begin{prop}\label{prop:key}
Let $f:\Gamma\to\Gamma$ be an expanding train track map. Then the following holds:

\begin{enumerate}
\item There are only finitely many (possibly none) periodic
  indivisible Nielsen paths in $\Gamma$ for $f$.
\item If $\eta$ is a pINP, then $\eta$ has the form
  $\eta=\alpha\beta^{-1}$, where $\alpha,\beta$ are nontrivial legal
  paths with $v=t(\alpha)=t(\beta)\in V\Gamma$ (but where
  $o(\alpha),o(\beta)$ need not be vertices of $\gamma$) such that the turn at
  $v$ between $\alpha$ and $\beta$ is illegal.
\item A path $\eta$ is a pINP of period $1$ for $f$ if and only if
  $\eta$ is an INP for $f$.
\item For two pINPs $\eta_1,\eta_2$ with $t(\eta_2)=o(\eta_1)$ the
  following conditions are equivalent:
\begin{enumerate}
\item[(a)] the path $\eta_1\eta_2$ is tight;
\item[(b)] the path $\eta_1\eta_2$ is legal.
\end{enumerate}
\item If $\eta_1=\alpha_1\beta_1^{-1}$ and
  $\eta_2=\alpha_2\beta_2^{-1}$ are two pINPs written in the form
  given by part (2) of this lemma and such that
  $o(\alpha_1)=o(\alpha_2)=v\in V\Gamma$ and that directions at $v$
  given by $\alpha_1$ and $\alpha_2$ are the same, then $\alpha_1$ is
  an initial segment of $\alpha_2$ or $\alpha_2$ is an initial segment
  of $\alpha_1$. 
\item For a nontrivial cyclically tight circuit $\gamma$ in $\Gamma$, $[\gamma]$
  represents an $f$-periodic conjugacy class in $\pi_1(\Gamma)$ if and
  only if $\gamma$ has the form $\gamma=\eta_1\dots \dots \eta_k$,
  where $k\ge 1$, where each $\eta_i$ is a pINP and where the
  concatenation $\eta_i\eta_{i+1}$ is tight for $i=1,\dots, k$ (with
  $\eta_{k+1}$ interpreted as $\eta_1$).
\end{enumerate}

\end{prop}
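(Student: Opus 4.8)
The plan is to draw all six assertions from the Bestvina--Handel theory of (indivisible) Nielsen paths \cite{BH92, BFH97}, organized around two elementary observations and one preliminary reduction. For any expanding train track map $h$: (i) a nontrivial \emph{legal} path $\gamma$ cannot be a periodic Nielsen path, since $h^j(\gamma)$ is then legal, hence already tight, so $h^j(\gamma)\simeq\gamma$ rel endpoints would force $h^{jk}(\gamma)=\gamma$ for all $k$, contradicting $|h^{jk}(\gamma)|\to\infty$; and (ii) the Bounded Cancellation Lemma supplies a constant $C(h)$ bounding the cancellation at a splitting point when an image path is tightened. For the reduction: since $\Gamma$ is a linear graph and $f$ a linear graph map, $f$ has finitely many periodic points and periodic directions, of uniformly bounded period (see \cite{DKL}), and by the standard theory \cite{BH92, BFH97} the pINPs of $f$ are finitely many (this is assertion (1)), so they have bounded period. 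Fix $m\ge1$ divisible by all these periods and large enough that $|f^m(e)|\ge2$ for every edge $e$ (possible since $f$ is expanding), and put $g:=f^m$. Then $g$ is an expanding train track map, it fixes every relevant periodic point and direction of $f$, it has the same legal turns as $f$, it satisfies $|g(P)|\ge2|P|$ for every legal edge-path $P$ (the column of $M(g)=M(f)^m$ indexed by $e$ has entries summing to $|f^m(e)|\ge2$, and there is no cancellation in $g(P)$), and every pINP of $f$ is an INP of $g$ (having period dividing $m$, it is a Nielsen path for $g$, and no nontrivial Nielsen paths for $g$ concatenate to it, since those would be periodic Nielsen paths for $f$). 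It therefore suffices to prove (1), (2), (5) for the INPs of the expanding train track map $g$, together with the finer bookkeeping that (3), (4), (6) require.

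\emph{Assertions (2) and (1).} Let $\eta$ be an INP of $g$. By (i) it is not legal; and since a non-degenerate turn at an interior point of an edge is the legal turn taken by that edge, every illegal turn of $\eta$ sits at a vertex. The Bestvina--Handel analysis \cite{BH92, BFH97} — follow the illegal turns of $\eta$ under iteration of $g$, use that tightening never increases the number of illegal turns, bound the cancellation at each by $C(g)$, and use $[g^n(\eta)]=\eta$ — shows $\eta$ has exactly one illegal turn, at a vertex $v$, so $\eta=\alpha\beta^{-1}$ with $\alpha,\beta$ nontrivial legal paths, $t(\alpha)=t(\beta)=v$, and that turn illegal. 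Writing $g(\alpha)=\alpha'\rho$ and $g(\beta)=\beta'\rho$ with $\rho$ the maximal common terminal segment of the legal paths $g(\alpha),g(\beta)$, we get $[g(\eta)]=\alpha'(\beta')^{-1}$; since this equals $\eta=\alpha\beta^{-1}$ and the decomposition of $\eta$ at its unique illegal turn is unique, $\alpha'=\alpha$ and $\beta'=\beta$, i.e.\ $g(\alpha)=\alpha\rho$, $g(\beta)=\beta\rho$ with a common $\rho$, a legal loop at $v$ that is nontrivial because $g(\alpha)=\alpha$ would contradict expansion. This is (2). For (1): the cancellation in passing from $g(\eta)$ to $[g(\eta)]=\eta$ equals $|\rho|$ on each leg, so $|\rho|\le C(g)$; as $\alpha$ ends at a vertex, $|g(\alpha)|\ge2|\alpha|-2$, while $|g(\alpha)|=|\alpha|+|\rho|\le|\alpha|+C(g)$, whence $|\alpha|\le C(g)+2$, and likewise for $\beta$. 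So every INP of $g$ has length at most $2C(g)+4$; since its endpoints lie among the finitely many $g$-fixed points and its simplicial support is one of finitely many edge-paths, $g$ has finitely many INPs, hence $f$ has finitely many pINPs.

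\emph{The remaining assertions.} These follow from (2) together with one lemma: a $g$-invariant legal ray starting at a vertex is determined by that vertex and its initial direction. Indeed, if $R,R'$ are two such with maximal common initial segment $\delta$, then $\delta$ is an edge-path (at its terminal point $R$ and $R'$ diverge with distinct directions, so that point is a vertex), and $g(\delta)$, being a common initial segment of $g(R)=R$ and $g(R')=R'$, satisfies $g(\delta)\preceq\delta$, so $|g(\delta)|\le|\delta|$; but $|g(\delta)|\ge2|\delta|$, forcing $\delta$ trivial, which contradicts the shared initial direction — hence $R=R'$. For an INP $\eta=\alpha\beta^{-1}$ of $g$, $\alpha$ is a prefix of $g(\alpha)=\alpha\rho$, hence of every $g^n(\alpha)$, so $R_\alpha:=\bigcup_n g^n(\alpha)$ is a $g$-invariant legal ray from $o(\alpha)$. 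Assertion (5) is now immediate: in its hypotheses $\alpha_1,\alpha_2$ are both initial segments of the common ray $R_{\alpha_1}=R_{\alpha_2}$, so one is an initial segment of the other. For (3): ``pINP of period $1\Rightarrow$ INP for $f$'' holds because Nielsen paths are periodic Nielsen paths; conversely, if $\eta$ is an INP for $f$, apply (2) with $g=f$ and note that a splitting $\eta=\gamma_1\gamma_2$ into nontrivial periodic Nielsen paths would make $\gamma_1$ or $\gamma_2$ a nontrivial sub-path of a single leg, or a whole leg — hence a nontrivial legal periodic Nielsen path, contradicting (i). For (4): a legal junction turn is non-degenerate, giving (b)$\Rightarrow$(a); conversely, at a junction point $p$ where two consecutively placed pINPs of periods $m_1,m_2$ meet, the two abutting legs are, by (2) applied to $f^{m_1}$ and $f^{m_2}$, prefixes of their own images, so their initial directions $d,d'$ at $p$ satisfy $Df^{m_1}(d)=d$ and $Df^{m_2}(d')=d'$; were the junction turn $\{d,d'\}$ illegal we would have $Df^n(d)=Df^n(d')$ for some $n$, and taking $n$ a common multiple of $m_1,m_2$ forces $d=d'$, so tight implies legal. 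For (6): if $\gamma=\eta_1\cdots\eta_k$ with the $\eta_i$ pINPs and all cyclic junctions tight, then with $m$ a common multiple of the periods, $f^m(\gamma)=\prod_i\alpha_i\rho_i\rho_i^{-1}\beta_i^{-1}$ cyclically tightens — the $\rho_i\rho_i^{-1}$ cancel, and the apex and junction turns, being non-degenerate, produce no further cancellation — to $\prod_i\alpha_i\beta_i^{-1}=\gamma$, whence $\phi^m$ fixes $[\gamma]$; conversely, if $[\gamma]$ is $\phi^m$-periodic then $[g(\gamma)]=\gamma$ with $g=f^m$, $\gamma$ is not legal by (i), its finitely many illegal turns (all at vertices) are permuted by $g$ and fixed after a further power, and the Bestvina--Handel analysis localized around each illegal turn (exactly as for (2)) extracts a pINP with that apex, exhibiting $\gamma=\eta_1\cdots\eta_k$ with the junctions lying in the interiors of the legal arcs of $\gamma$ — hence legal, in particular tight.

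\emph{The main obstacle.} The eigenray-uniqueness lemma is the one genuinely new ingredient, and it is short; once (2) is available the rest is essentially formal. The real work is (2) itself, together with the ``only if'' direction of (6): proving that an INP (respectively, a cyclically tight Nielsen circuit) has exactly one illegal turn per corner, and that the legal arcs around each illegal turn stabilize, under iteration of the map, to legs $\alpha,\beta$ with $g(\alpha)=\alpha\rho$ and $g(\beta)=\beta\rho$. This is the Bestvina--Handel combinatorial bookkeeping of how illegal turns propagate under $g$ and of exactly how much cancellation the Bounded Cancellation Lemma permits at each of them; rather than redo it, I would cite \cite{BH92, BFH97}.
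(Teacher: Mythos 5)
The paper offers no proof of Proposition~\ref{prop:key} at all: it is recorded as a collection of standard facts with a citation to \cite{BH92,BFH97}, so your reconstruction cannot be compared against an argument in the text. Your sketch follows the standard route and is sound in outline, and since you too defer the two genuinely hard steps (the one-illegal-turn structure in (2) and the ``only if'' half of (6)) to \cite{BH92,BFH97}, it is in effect the same approach as the paper, fleshed out; your bounded-cancellation bound on the legs is the same computation the paper performs later in Corollary~\ref{cor:INP-len}, and your eigenray-uniqueness lemma is a correct and genuinely useful addition for (5). One small point to tidy: your choice of the power $m$ requires a uniform bound on the periods of pINPs, which you extract from their finiteness --- i.e.\ from assertion (1), which you then re-derive for $g=f^m$; to avoid this mild circularity, invoke instead the uniform bound $n=n(b_1(\Gamma))$ of Feighn--Handel \cite[Corollary 3.14]{FH}, as the paper does in Remark~\ref{rem:FH}, which gives the exponent directly without presupposing (1).
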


We will also need the following key algorithmic result essentially due
to Turner~\cite{Turner}:

\begin{prop}\label{prop:turner}
There exists an algorithm that, given an expanding standard train
track map $f:\Gamma\to\Gamma$, finds all the periodic indivisible
Nielsen paths for $f$. 
\end{prop}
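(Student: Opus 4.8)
The plan is to reduce the problem to a finite, effectively bounded search, using the structural facts in Proposition~\ref{prop:key} together with explicit bounded cancellation estimates. By Proposition~\ref{prop:key}(2), any pINP $\eta$ has the form $\eta = \alpha\beta^{-1}$ with $\alpha,\beta$ nontrivial legal paths, $v = t(\alpha) = t(\beta) \in V\Gamma$, and the turn between $\alpha$ and $\beta$ at $v$ illegal. Moreover, being a pINP of some period $n$ means $\gamma$ is an INP for $f^n$; by Proposition~\ref{prop:key}(3) an INP is precisely a pINP of period $1$, so it suffices, for each relevant $n$, to find the INPs of $f^n$. The number of periodic points of bounded period is controlled: the vertices of $\Gamma$ together with the illegal turns at those vertices form a finite set, and any pINP is determined (via the two legal legs emanating backwards from an illegal turn at a periodic vertex) by finitely many pieces of data. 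So the key point is to bound, uniformly and effectively, the combinatorial lengths $|\alpha|$ and $|\beta|$ of the two legs of any pINP.

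The main step, and the main obstacle, is exactly this length bound. Here is how I would get it. Write $\eta = \alpha\beta^{-1}$ as above with period $n$, so $f^n(\eta) \simeq \eta$ rel endpoints. Since $\alpha,\beta$ are legal, $f^n(\alpha)$ and $f^n(\beta)$ are legal paths (no cancellation inside each), and the reduced form of $f^n(\eta) = f^n(\alpha) f^n(\beta)^{-1}$ is obtained by cancelling a common terminal segment of $f^n(\alpha)$ with a common terminal segment of $f^n(\beta)$ at the (now possibly shifted) image of $v$. Let $C$ be a bounded cancellation constant for $f^n$ (computable from the explicit form of $f$, e.g.\ via the standard $C \le$ (a polynomial in $||f^n||$ and the rank) estimate). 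Since $f^n$ is expanding with Perron--Frobenius eigenvalue $\lambda^n > 1$, comparing lengths before and after reduction forces $|f^n(\alpha)| - |\alpha|$ and $|f^n(\beta)| - |\beta|$ to be bounded in terms of $C$ — indeed the amount cancelled on each side is at most $C$, while $|f^n(\alpha)|$ grows at least linearly with slope bounded below by a PF lower bound. Solving the resulting inequality $|f^n(\alpha)| \le |\alpha| + (\text{small error}) $ against the expansion estimate $|f^n(\alpha)| \ge (\text{linear growth in } |\alpha|)$ yields an explicit upper bound $|\alpha| \le M$ (and similarly $|\beta| \le M$), where $M$ depends only on $\Gamma$, $f$, and $n$. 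One also needs an a priori bound on the period $n$: since each endpoint of a pINP is a periodic point, and periodic points of an expanding standard train track map live at vertices or at finitely many explicitly-locatable points inside edges with period bounded by a function of the combinatorial data (this is where the linear-graph-map convention of Section~\ref{sec:b} is essential — it rules out the pathological uncountable periodic-point behaviour mentioned there), one gets $n \le N_0$ for an effective $N_0$.

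Given these bounds, the algorithm is then straightforward. For each $n$ from $1$ to $N_0$, compute $f^n$ and its bounded cancellation constant, hence the bound $M = M(n)$; enumerate all pairs $(\alpha,\beta)$ of legal paths with $|\alpha|,|\beta| \le M$, sharing a common terminal vertex $v$ at which the turn between them is illegal, and with $\alpha,\beta$ ending at $f^n$-periodic points realized consistently (so that the endpoints of $\eta = \alpha\beta^{-1}$ are fixed by $f^n$); for each such candidate $\eta$, tighten $f^n(\eta)$ and check whether it equals $\eta$ rel endpoints. This is a finite check for each candidate, and the candidate set is finite. Finally, among the Nielsen paths of $f^n$ so found, discard those that decompose as concatenations of two nontrivial periodic Nielsen paths (again a finite check, using Proposition~\ref{prop:key}(6) or direct inspection), retaining exactly the pINPs; taking the union over $n \le N_0$ and removing duplicates gives all pINPs for $f$. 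Correctness follows from Proposition~\ref{prop:key}: part (1) guarantees the output is finite, parts (2)--(3) guarantee that every pINP arises as one of the enumerated candidates, and the defining property together with the indivisibility check guarantees we output neither too much nor too little.
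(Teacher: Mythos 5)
Your overall strategy (bound the legs of a pINP via bounded cancellation, then enumerate candidate pairs of legal legs meeting at an illegal turn and test them by tightening) is sound and is essentially how the paper later finds INPs in Theorem~\ref{thm:alg-inp} and Theorem~\ref{thm:turner}; the paper's actual proof of Proposition~\ref{prop:turner} is much shorter, citing Turner~\cite{Turner} for the INP search and Feighn--Handel for the reduction from pINPs to INPs of a fixed power. The genuine gap in your argument is the a priori bound $N_0$ on the period $n$ of a pINP. You justify it by claiming that periodic points of an expanding standard (linear) train track map ``live at vertices or at finitely many explicitly-locatable points inside edges with period bounded by a function of the combinatorial data.'' That is false: the linear-graph-map convention only guarantees that $Fix(f^n)$ is finite for each \emph{fixed} $n$; an expanding train track map has infinitely many periodic points overall, of arbitrarily large period (already visible in the symbolic dynamics of the edge subshift). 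So bounding the period of the endpoints of a pINP cannot be done by appealing to a bounded-period property of periodic points, and without a valid bound on $n$ your enumeration over $n\le N_0$ does not cover all pINPs, so the algorithm as written is incomplete. The paper closes exactly this gap by invoking \cite[Corollary~3.14]{FH} (see Remark~\ref{rem:FH}): there is a computable bound $n=n(b_1(\Gamma))$, depending only on the rank, such that $\eta$ is a pINP for $f$ if and only if $\eta$ is an INP for $f^n$; this is a nontrivial result whose proof bounds the period of the finitely many pINPs (Proposition~\ref{prop:key}(1)), not of arbitrary periodic points. If you want to avoid citing \cite{FH}, you would need an argument of that flavor, e.g.\ that each pINP contains exactly one illegal turn at a vertex, that distinct pINPs determine distinct illegal turns (via Proposition~\ref{prop:key}(2),(5)), and that $f_{\#}$ permutes the resulting finite set, so periods are bounded by the maximal order of a permutation of a set of size at most the number of illegal turns; but some such step must be supplied.

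A secondary, more minor point: your length bound for the legs of an INP of $f^n$ needs the expansion made quantitative (a single application of $f^n$ need not stretch every edge by a definite factor), which the paper handles by passing to $f^{m}$ with $m$ the number of edges (Lemma~\ref{lem:2} and Corollary~\ref{cor:INP-len}); also $f^n$ of a standard map need not be standard, an issue dealt with in the proof of Theorem~\ref{thm:turner} by a reparameterization. These are fixable technicalities, unlike the missing period bound.
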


Later we will show (see Theorem~\ref{thm:turner} below) that one can provide a reasonably satisfactory complexity
estimate for the above statement.

\begin{proof}
By a result of Feighn and Handel~\cite{FH}, there exists a computable universal bound
$n=n(b_1(\Gamma))\ge 1$ such that $\eta$ is a periodic indivisible
Nielsen path for $f$ if and only if $\eta$ is an INP for $f^n$. 

We then apply the result of Turner~\cite{Turner} to algorithmically
find all the INPs for $f^n$.
\end{proof}

\begin{rem}\label{rem:FH}
In fact, Feighn and Handel show in \cite[Corollary 3.14]{FH} that
one can take $n(r)=3^{r^2-1}\left(g(15r-15)\right)!$, where $r=b_1(\Gamma)$. Here
$g(k)$ is Landau's function, which computes the maximal order of an
element in the symmetric group $S_k$. It is known that $g(k)\le
e^{k/e}$, so that $g(k)$ grows at most exponentially in $k$. 
\end{rem}

\section{Graph of periodic Nielsen paths}

We need the following useful definition, which is a slightly adapted version
of the graph $S(f)$ defined in Section~8.2 of \cite{FH} in the context
of CT maps. 

In this section we will assume that $\Gamma$ is a finite connected graph with all vertices of degree $\ge 3$ and with $b_1(\Gamma)=N\ge2$, and equipped  with a fixed marking identifying $\pi_1(\Gamma)$ with $F_N$.

\begin{defn}[Graph of periodic Nielsen paths]
Let $f:\Gamma\to\Gamma$ be an expanding irreducible train track map.

We define the \emph{graph of periodic Nielsen paths} $S(f)$ for $f$ as
follows. 

If there are no pINPs for $f$, define the graph $S(f)$ to be empty.

Otherwise, enumerate all the distinct periodic indivisible Nielsen paths
$\eta_1,\dots, \eta_k$ for $f$, where we eliminate duplication due to
inversion (that is, for a pINP $\eta$, we include only one of the
pINPs $\eta,\eta^{-1}$ in the list). Let $Y_f$ be the set of all $x\in
\Gamma$ such that $x$ occurs as an endpoint of some $\eta_i$. Put $V(S(f)):=Y_f$. 

For each $i=1,\dots, k$ we put a topological edge (i.e. a 1-cell) in
$S(f)$ joining vertices $o(\eta_i)$ and $t(\eta_i)$ of $S(f)$. We
orient this edge from $o(\eta_i)$ to $t(\eta_i)$ and label it by
$\eta_i$.

Thus $S(f)$ is a 1-complex as well as an oriented labelled graph in
the sense of \cite{Ka14}.

The edge-labels in $S(f)$ naturally define, by using concatenation, the \emph{labelling map}
$\mu$, associating to every edge-path (respectively, every circuit) in
$S(f)$ an edge-path  (respectively a circuit) in $\Gamma$.
\end{defn}

If $x\in Y_f$, then the labelling map $\mu$ defines a natural
homomorphism $\mu_{*,x}:\pi_1(S_f,x)\to \pi_1(\Gamma,x)$. Note that
this homomorphism, a priori, need not be injective, since it is
possible that two distinct pINPs start with the same vertex and the
same direction in $\Gamma$. So the image of a tight circuit at $x$ in
$S_f$ under the labelling map $\mu$ may not be tight as a circuit in $\Gamma$.

Note that any cyclic concatenation of pINPs that tightens to a
homotopically nontrivial closed path in $\Gamma$ defines a nontrivial
$f_\#$-periodic conjugacy class in $\pi_1(\Gamma)$. 
Proposition~\ref{prop:key} says that every $f_\#$-periodic conjugacy
class arises in this way. Therefore Proposition~\ref{prop:key} implies:

\begin{prop}\label{prop:mu}
Let $f:\Gamma\to\Gamma$ be an expanding irreducible train track map. 

Then a conjugacy class $[g]$ in $\pi_1(\Gamma)$ is $f_\#$periodic if and only if there is $x\in Y_f$ such that $[g]$ is represented by an element of $\mu_{*,x}(\pi_1(S(f),x))$.

In particular, $f_\#$ is atoroidal if and only if for every $x\in Y_f$ we have
$\mu_{*,x}(\pi_1(S(f),x))=\{1\}$.

\end{prop}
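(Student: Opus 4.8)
The plan is to deduce Proposition~\ref{prop:mu} directly from Proposition~\ref{prop:key}(6), which already characterizes the $f_\#$-periodic conjugacy classes as exactly those represented by cyclically tight circuits of the form $\gamma = \eta_1 \cdots \eta_k$ where each $\eta_i$ is a pINP and each concatenation $\eta_i \eta_{i+1}$ is tight. The whole point of the graph $S(f)$ is that such cyclic concatenations of pINPs are precisely the images under the labelling map $\mu$ of circuits in $S(f)$. So the argument is essentially a translation between the ``concatenation of pINPs'' language and the ``$\pi_1(S(f))$'' language.

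First I would prove the forward direction. Suppose $[g]$ is $f_\#$-periodic. By Proposition~\ref{prop:key}(6), $[g]$ is represented by a cyclically tight circuit $\gamma = \eta_1 \cdots \eta_k$ with each $\eta_i$ a pINP and each $\eta_i \eta_{i+1}$ tight. Each $\eta_i$ (up to inversion) is one of the chosen pINPs $\eta_1, \dots, \eta_k$ labelling an edge of $S(f)$, so the sequence of these edges (traversed with appropriate orientations) forms a closed edge-path $\delta$ in $S(f)$ based at the vertex $x := o(\eta_1) \in Y_f$. By construction $\mu(\delta) = \gamma$ as a path in $\Gamma$, hence $[g]$ is represented by an element of $\mu_{*,x}(\pi_1(S(f),x))$ (note that one may need to tighten $\delta$ in $S(f)$ first, but $\mu$ sends the reduced representative to a path freely homotopic to $\gamma$, which still represents $[g]$).

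For the converse, suppose $x \in Y_f$ and $[g]$ is represented by some $w \in \mu_{*,x}(\pi_1(S(f),x))$. Choose a tight closed edge-path $\delta$ in $S(f)$ at $x$ representing the relevant element of $\pi_1(S(f),x)$; then $\delta = \zeta_1 \cdots \zeta_k$ where each $\zeta_j$ is (an edge of $S(f)$ and hence) a pINP $\eta_{i_j}^{\pm 1}$. Thus $\mu(\delta) = \zeta_1 \cdots \zeta_k$ is a cyclic concatenation of pINPs in $\Gamma$; tightening it cyclically (as a circuit) gives a cyclically tight circuit $\gamma$. I would then invoke the observation already made in the excerpt — that any cyclic concatenation of pINPs that tightens to a homotopically nontrivial closed path defines a nontrivial $f_\#$-periodic conjugacy class — together with the trivial case: if $\gamma$ tightens to a trivial path then $[g] = 1$, which is certainly $f_\#$-periodic. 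In either case $[g]$ is $f_\#$-periodic. The ``in particular'' statement about atoroidality is then immediate: $f_\#$ is atoroidal iff it has no nontrivial periodic conjugacy class iff, by the equivalence just proved, for every $x \in Y_f$ the image $\mu_{*,x}(\pi_1(S(f),x))$ contains no nontrivial conjugacy class, i.e. equals $\{1\}$ (here one uses that if $\pi_1(S(f),x)$ maps to a nontrivial subgroup then some element maps to a nontrivial, hence nontrivially-periodic, conjugacy class).

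The main obstacle, and the only place that needs real care rather than bookkeeping, is the interaction between tightness in $S(f)$ and tightness in $\Gamma$: a reduced edge-path in $S(f)$ need not have tight $\mu$-image in $\Gamma$, as the paragraph preceding the proposition explicitly warns (two distinct pINPs can share an initial vertex and direction). One must check that this mismatch does no harm — it does not, because Proposition~\ref{prop:key}(6) is stated in terms of cyclic concatenations of pINPs with the concatenations merely required to be \emph{tight}, not in terms of a reduced word in some generating set, and tightening a circuit in $\Gamma$ only changes it within its free homotopy class, preserving the conjugacy class $[g]$. So the translation goes through once one is careful to tighten on the $\Gamma$ side at the end rather than assuming the $\mu$-image of a reduced $S(f)$-path is already a circuit.
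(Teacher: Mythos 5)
Your proposal is correct and follows essentially the same route as the paper: the paper derives Proposition~\ref{prop:mu} directly from Proposition~\ref{prop:key}(6) via the remark that cyclic concatenations of pINPs tightening to nontrivial closed paths give exactly the $f_\#$-periodic conjugacy classes, which is precisely your translation between circuits in $S(f)$ and concatenations of pINPs. Your extra care about the mismatch between reducedness in $S(f)$ and tightness in $\Gamma$ is a welcome elaboration of a point the paper only flags implicitly, but it does not change the argument.
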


We also record the following useful property of the graph $S(f)$ which follows directly from Proposition~\ref{prop:key}.

\begin{prop}\label{prop:comp}
Let $f$ be as in Proposition~\ref{prop:key}.

Let $Q$ be a connected component of $S(f)$, let $x$ be a vertex of $Q$ and let $U\le F_N$ be the image of $\pi_1(Q,x)$ in $F_N$ under $\mu_{*,x}$. Then there is some $n\ge 1$ such that for the element $\phi\in \Out(F_N)$ represented by $f$ we have $\phi^n[U]=[U]$, and, moreover, there is a representative of $\phi^n$ in $Aut(F_N)$ which restricts to the identity map on $U$. 
\end{prop}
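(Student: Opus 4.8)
The plan is to analyze the structure of a connected component $Q$ of $S(f)$ using Proposition~\ref{prop:key}(6) and a pigeonhole/periodicity argument on the pINPs that make up $Q$. First I would recall that $Q$ is a finite connected graph whose edges are labelled by pINPs $\eta_{i_1},\dots,\eta_{i_m}$ (and their inverses), and whose vertices are periodic points of $f$. Since there are only finitely many pINPs (Proposition~\ref{prop:key}(1)), and since each endpoint of a pINP of period $p$ is fixed by $f^p$, there is a common $n_0\ge 1$ such that $f^{n_0}$ fixes every vertex of $Q$ and such that every pINP labelling an edge of $Q$ is an actual Nielsen path (not just periodic Nielsen path) for $f^{n_0}$. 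Passing to $g:=f^{n_0}$, every edge-label $\eta$ of $Q$ satisfies $g(\eta)\simeq \eta$ rel endpoints.

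Next, the labelling map $\mu$ sends any edge-path $\delta$ in $Q$ to the concatenation $\mu(\delta)$ of the corresponding pINPs in $\Gamma$; this need not be tight, but it is a well-defined path with fixed endpoints, and since $g$ maps each $\eta_{i_j}$ to a path homotopic rel endpoints to $\eta_{i_j}$, it follows that $g_\#$ (the induced map on homotopy classes of paths rel endpoints, i.e. on $\mu(\delta)$ after tightening) fixes $[\mu(\delta)]$ for every edge-path $\delta$ in $Q$. Applying this with closed paths $\delta$ based at the chosen vertex $x$ of $Q$, we get that $g_\#$ fixes every element of $U=\mu_{*,x}(\pi_1(Q,x))\le F_N=\pi_1(\Gamma,x)$. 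In other words, if $\Phi\in\Aut(F_N)$ is the representative of $\phi^{n_0}$ determined by $g$ and the basepoint $x$ (via the standard correspondence between train track maps fixing a point and automorphisms), then $\Phi$ restricts to the identity on $U$. In particular $\Phi([U])=[U]$, so $\phi^{n_0}[U]=[U]$, and we may take $n=n_0$.

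The point requiring care is the passage from ``$g$ fixes each generating loop of $Q$ up to homotopy rel endpoints in $\Gamma$'' to ``$\Phi$ restricts to the identity on the subgroup $U$''. The subtlety is that $\pi_1(Q,x)$ is generated by loops read off from spanning-tree arguments in $Q$, each of which maps under $\mu$ to a product of pINPs $\eta_{i_j}^{\pm1}$; applying $g$ edge-by-edge and using that $g(\eta_{i_j})\simeq\eta_{i_j}$ rel endpoints (and that the shared endpoints are fixed, so the homotopies concatenate) shows $g_\#\mu(\delta)\simeq\mu(\delta)$ rel the basepoint $x$. Under the dictionary $\pi_1(\Gamma,x)\cong F_N$, ``$g_\#$ fixes the based homotopy class of $\mu(\delta)$'' is exactly ``$\Phi$ fixes the corresponding element of $U$'', and since these elements generate $U$ and $\Phi$ is a homomorphism, $\Phi|_U=\mathrm{id}_U$. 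The main obstacle, then, is purely bookkeeping: setting up the correct automorphism representative attached to $g$ and the basepoint, and checking that the edgewise homotopies of pINPs glue to a based homotopy; once that is in place, the conclusion is immediate and there is no further quantitative content, so no complexity estimate is needed here.
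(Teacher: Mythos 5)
Your argument is correct, and it is essentially the argument the paper has in mind: the paper gives no written proof, stating only that the proposition ``follows directly from Proposition~\ref{prop:key}'', and your write-up is precisely that direct argument --- take $n$ to be a common multiple of the periods of the finitely many pINPs labelling edges of $Q$, so that $f^n$ fixes all vertices of $Q$ and fixes each labelling pINP up to homotopy rel endpoints, glue these homotopies along the fixed endpoints to see that $(f^n)_\ast$ fixes every element of $U\le\pi_1(\Gamma,x)$, and note that $(f^n)_\ast$ at the fixed basepoint $x$ is a representative in $\Aut(F_N)$ of $\phi^n$. The bookkeeping points you flag (basepoint/marking identification, concatenation of the rel-endpoint homotopies) are handled correctly, so nothing is missing.
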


\section{Clean train tracks and full irreducibility}\label{Sec:clean}

\begin{defn}[Whitehead graphs and stable laminations]\label{defn:wh-lam}
Let $f:\Gamma\to\Gamma$ be an expanding
irreducible train track map.
Recall for a vertex $v\in V\Gamma$, the  \emph{Whitehead
  graph} $Wh_\Gamma(v,f)$ is a simple graph that has as its
vertices the set of all $e\in E\Gamma$ with $o(e)=v$, and where two
distinct such edges $e,e'\in E\Gamma$ are adjacent in
$Wh_\Gamma(v,f)$ if and only if there exist $e''\in E\Gamma$ and $n\ge
1$ such that $e^{-1}e'$ is a subpath of $f^n(e'')$ (note that in this
case $(e')^{-1}e$ is a subpath of $f^n((e'')^{-1})$, so that this
adjacency condition is symmetric). Recall also that the \emph{stable
  lamination} $\Lambda(f)$ of $f$ consists of all bi-infinite tight paths (called \emph{leaves} of $\Lambda(f)$)
$\gamma= \dots, e_{-1}, e_0,e_1,e_2,\dots $ in $\Gamma$ such that for
all $-\infty<i<j<\infty$ the path $e_i\dots e_j$ occurs as a subpath
of $f^n(e)$ for some $e\in E\Gamma$ and some $n\ge 1$. See \cite{Ka14}
for more details.
\end{defn}

We also recall the following key notion from \cite{Ka14}:

\begin{defn}[Clean train track map]
Let $f:\Gamma\to\Gamma$ be a train track map.

\begin{enumerate}
\item We say that $f$ is \emph{clean} if the matrix $M(f)$ is
  primitive (that is, there exists $t\ge 1$ such that every entry of
  $M(f^t)=(M(f))^t$ is positive) and if for every vertex $v\in
  V\Gamma$ the Whitehead graph $Wh_\Gamma(v,f)$ is connected.
\item We say that $f$ is \emph{weakly clean} if $f$ is expanding and
  irreducible (that is,  the matrix $M(f)$ is irreducible and has $\lambda(f)>1$) and if for every vertex $v\in
  V\Gamma$ the Whitehead graph $Wh_\Gamma(v,f)$ is connected.
\end{enumerate}
\end{defn}

Although this fact is not obvious from the definition, \cite{DKL}
showed:

\begin{prop}\label{prop:DKL}
Let $f:\Gamma\to\Gamma$ be an expanding irreducible train track map. Then $f$ is clean if and only if $f$ is weakly clean.
\end{prop}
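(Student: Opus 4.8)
The plan is to prove Proposition~\ref{prop:DKL}, which asserts that for an expanding irreducible train track map $f:\Gamma\to\Gamma$, cleanliness and weak cleanliness coincide. The nontrivial direction is that weakly clean implies clean: we must upgrade irreducibility of $M(f)$ to primitivity of $M(f)$, since the Whitehead graph condition is identical in both definitions. So the whole content reduces to showing: if $f$ is expanding, $M(f)$ is irreducible, and every Whitehead graph $Wh_\Gamma(v,f)$ is connected, then $M(f)$ is primitive.

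First I would recall that an irreducible nonnegative integer matrix is primitive if and only if it is aperiodic, i.e. $\gcd$ of the lengths of its return loops (the period of the matrix) equals $1$. Equivalently, $f$ fails to be clean precisely when $\Gamma$ admits a ``cyclic'' decomposition: the edge set $E\Gamma$ partitions into $p\ge 2$ blocks $E_0,\dots,E_{p-1}$ cyclically permuted by $f$ in the sense that every edge in the $f$-image of an edge of $E_i$ lies in $E_{i+1 \bmod p}$. The strategy is to show that connectedness of all the Whitehead graphs forces $p=1$. The key geometric input is that leaves of the stable lamination $\Lambda(f)$, and more precisely the local structure of $f^n(e)$ near vertices, interact with such a block decomposition in an incompatible way: if edges incident to a vertex $v$ fall into different blocks, then the Whitehead graph $Wh_\Gamma(v,f)$ can have no edge between a direction in block $E_i$ and a direction in block $E_j$ unless $i$ and $j$ are ``compatible'' in a way that quickly yields $p \mid$ (something forcing $p=1$); so connectedness of $Wh_\Gamma(v,f)$ would be violated.

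More precisely, the main steps I would carry out are: (1) Assume $M(f)$ is imprimitive with period $p\ge 2$ and fix the block decomposition $E\Gamma = \bigsqcup_{i\in\Z/p} E_i$ with $f$ shifting blocks by $1$. Note each $f^p$ preserves each block, and $f^p$ restricted to (the subgraph spanned by) each block is an irreducible expanding train track map whose transition matrix is primitive. (2) Observe that every bi-infinite leaf $\gamma = \dots e_{-1}e_0 e_1 \dots$ of $\Lambda(f)$ — and indeed every sufficiently long subpath of any $f^n(e)$ — has consecutive edges lying in consecutive blocks: if $e_j \in E_i$ then $e_{j+1}\in E_{i+1}$. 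This is because such a subpath is a subpath of some $f^n(e)$, and $f$ shifts blocks uniformly. (3) Fix a vertex $v$ and consider the Whitehead graph $Wh_\Gamma(v,f)$, whose vertices are the directions $e\in E\Gamma$ with $o(e)=v$. An edge $e^{-1}e'$ appears in $Wh_\Gamma(v,f)$ iff $e^{-1}e'$ is a subpath of some $f^n(e'')$; by step (2), if $e\in E_i$ (as an oriented edge out of $v$), then $e^{-1}$ is the terminal letter of a block-$E_?$ sequence and $e'$ is the next, which pins down the block of $e'$ in terms of the block of $e$ and $p$. Hence all directions $e$ at $v$ that can be Whitehead-adjacent to a given $e_0$ lie in a single coset of a proper subgroup of $\Z/p$; since $Wh_\Gamma(v,f)$ is connected on all directions at $v$, and $v$ has degree $\ge 3$ with its directions realized among the blocks, we get a contradiction with $p\ge 2$. (This is where I would lean on \cite{DKL}, possibly invoking their explicit bounded-cancellation/lamination lemmas to make step (2)–(3) rigorous.)

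The main obstacle I anticipate is step (3): making precise the claim that the block indices of Whitehead-adjacent directions at a vertex are rigidly constrained, and then deducing a genuine disconnection of $Wh_\Gamma(v,f)$. The subtlety is that a direction $e$ at $v$ appears in $Wh_\Gamma(v,f)$ as $e^{-1}$ being the \emph{reverse} of an edge, so one must track how the block decomposition behaves under edge-inversion — which it need not do in an obvious way unless one is careful about orientations. The cleanest route may be to bypass vertex-local combinatorics and instead argue at the level of laminations: if $M(f)$ has period $p\ge 2$, the lamination $\Lambda(f)$ decomposes accordingly, and one shows that the ``full'' Whitehead graph (the union over all vertices, or the relevant local one) of such a periodic lamination is disconnected, contradicting weak cleanliness. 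In any case, the forward direction (clean $\Rightarrow$ weakly clean) is immediate since primitive matrices are irreducible and $\lambda(f)>1$ follows from primitivity plus $\Gamma$ not being a single edge (which is forced by $b_1(\Gamma)=N\ge 2$), so essentially no work is needed there.
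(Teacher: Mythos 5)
Your reduction of the problem is correct: under the hypotheses, everything comes down to showing that irreducibility of $M(f)$ plus connectedness of all Whitehead graphs forces primitivity, and the right move is to assume the period is $p\ge 2$ and use the cyclic block decomposition $E\Gamma=\bigsqcup_{i\in\Z/p}E_i$ with $f(E_i)$ crossing only edges of $E_{i+1}$. But your step (2) is false, and it is the pivot of your whole argument. If $e\in E_i$, then \emph{every} edge occurring in $f^n(e)$ lies in $E_{i+n}$; the blocks are shifted by applying $f$, not by moving along the image path. Hence consecutive edges of a subpath of $f^n(e)$ (and of a leaf of $\Lambda(f)$) lie in the \emph{same} block, not in consecutive blocks. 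Your step (3) then fails twice over: first, the ``block-shifting'' adjacency rule you extract from (2) is wrong; second, even if it were right it would only make $Wh_\Gamma(v,f)$ a $p$-partite graph, and a $p$-partite graph can perfectly well be connected (already for $p=2$), so no contradiction with connectedness follows. You correctly flag step (3) as the weak point, but the coset/disconnection claim is never established, and the orientation-reversal worry you raise is a symptom of the underlying sign error in (2).

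The repair is short and goes in the opposite direction from your (2): since any Whitehead adjacency between directions $e,e'$ at $v$ comes from a two-edge subpath $e^{-1}e'$ of some $f^n(e'')$, the underlying topological edges of $e$ and $e'$ both occur in $f^n(e'')$ and hence lie in the \emph{same} block. So every edge of $Wh_\Gamma(v,f)$ joins directions from a single block. If $p\ge 2$, there must be a vertex $v$ incident to edges from two distinct blocks (otherwise the subgraphs spanned by the blocks are pairwise vertex-disjoint with union $\Gamma$, contradicting connectedness of $\Gamma$ since all blocks are nonempty); since the vertex set of $Wh_\Gamma(v,f)$ consists of \emph{all} directions at $v$, that Whitehead graph is disconnected, contradicting weak cleanliness. (Your remarks about the easy direction are fine but unnecessary, as $\lambda(f)>1$ is part of the standing hypothesis.) Note also that the paper itself gives no proof of this proposition; it is quoted from \cite{DKL}, so the comparison here is with the argument given there, which is along the lines just sketched rather than yours.
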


We recall the following key result from \cite{Ka14}:
\begin{prop}\label{prop:carry}
Let $f:\Gamma\to\Gamma$ be a clean expanding irreducible train track map.

Let $1\ne H\le \pi_1(\Gamma)$ be a finitely generated subgroup of infinite index in $\pi_1(\Gamma)$ and let $\Delta_H$ be the Stallings core of the cover of $\Gamma$ corresponding to $H$. Then $\Delta_H$ does not carry any leaf of $\Lambda(f)$, that is, no leaf of $\Lambda(f)$ lifts to a path in $\Delta_H$.
\end{prop}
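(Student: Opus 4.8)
The plan is to exploit the cleanness of $f$ in two ways. Since $f$ is clean, $M(f)$ is primitive, and this forces the stable lamination $\Lambda(f)$ to be minimal, which lets us upgrade the hypothesis ``$\Delta_H$ carries some leaf of $\Lambda(f)$'' to ``$\Delta_H$ carries all of $\Lambda(f)$''. Also, since $f$ is clean, the Whitehead graphs $Wh_\Gamma(v,f)$ are all connected, and for an expanding train track map this forces $\Lambda(f)$ to be \emph{filling}; but a filling lamination is not carried by any finitely generated subgroup of $\FN$ of infinite index, contradicting the output of the first step. The first ingredient is routine, while the second is the substantive one.

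For the first step, note that $\Lambda(f)$ is the subshift associated to the primitive substitution $e\mapsto f(e)$ on the alphabet $E\Gamma$, and since $\lambda(f)>1$ this subshift is infinite, so by the standard theory of primitive substitutions it is minimal; in particular all leaves of $\Lambda(f)$ have the same ``language'', namely the set of finite edge-paths occurring in the iterates $f^n(e)$ ($e\in E\Gamma$, $n\ge 1$). A finite immersed graph carries a bi-infinite reduced edge-path if and only if it carries every finite subpath of that path (a routine compactness argument, each finite subpath having at most $|V\Delta_H|$ lifts). Hence, if $\Delta_H$ carries one leaf of $\Lambda(f)$, then every finite path in the common language lifts to $\Delta_H$, so $\Delta_H$ carries every leaf. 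Fixing a leaf $\gamma$ with a lift $\tilde\gamma$ to $\Delta_H$, I would pass to the subgraph $\Delta'\subseteq\Delta_H$ spanned by the edges crossed by $\tilde\gamma$: it is a connected core graph (every vertex has degree $\ge 2$, as $\tilde\gamma$ is bi-infinite and reduced), $\pi_1(\Delta')$ is a finitely generated subgroup of $H$, hence of infinite index in $\FN$, and $\Delta'$ still carries all of $\Lambda(f)$ (since $\tilde\gamma$, and so every finite path in the language, lies in $\Delta'$). Since $M(f)$ is irreducible, every edge of $\Gamma$ occurs in some leaf, so the immersion $p\colon\Delta'\to\Gamma$ is onto on vertices and edges. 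Replacing $\Delta_H$ by $\Delta'$, we may assume that $p$ is an onto immersion carrying all of $\Lambda(f)$ with $\pi_1(\Delta_H)$ of infinite index; in particular $p$ is not a covering map, so there exist a vertex $w_0$ of $\Delta_H$ and an edge $e_0$ of $\Gamma$ originating at $v_0:=p(w_0)$ that does not lift at $w_0$.

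The main step --- the one I expect to be the genuine obstacle --- is to show that no such $\Delta_H$ can exist; equivalently, that for an expanding train track map $f$ all of whose Whitehead graphs $Wh_\Gamma(v,f)$ are connected, $\Lambda(f)$ is not carried by any finitely generated subgroup of $\FN$ of infinite index. This is exactly where the connectivity of the Whitehead graphs enters. Concretely: with the notation above, for each vertex $w$ of $\Delta_H$ over $v_0$ let $S_w$ denote the set of edges of $\Gamma$ at $v_0$ that lift at $w$; surjectivity of $p$ gives $\bigcup_w S_w=\{e\in E\Gamma: o(e)=v_0\}$ while $e_0\notin S_{w_0}$, and, because $\Delta_H$ carries $\Lambda(f)$, every edge $\{e,e'\}$ of $Wh_\Gamma(v_0,f)$ satisfies $e,e'\in S_w$ for some $w$ (the turn $e^{-1}e'$ occurs in a leaf, which lifts to $\Delta_H$). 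Thus the connected graph $Wh_\Gamma(v_0,f)$, whose vertex set is all of the edges at $v_0$, is edge-covered by the proper subsets $S_w$, while a single bi-infinite lifted leaf realizes all of these turns as it travels among the finitely many preimages of $v_0$; the claim is that such a configuration cannot occur, and making this rigorous is the crux of the matter. This is carried out in \cite{Ka14} via the theory of Whitehead graphs and the Whitehead algorithm for free groups (compare the role of Whitehead-graph connectivity in \cite{BH92,BFH97}), and with it Proposition~\ref{prop:carry} follows.
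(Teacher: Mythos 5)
The paper does not actually prove Proposition~\ref{prop:carry}: it is recalled verbatim from \cite{Ka14}, so there is no internal argument to compare against, and a complete proof would have to reproduce the argument of \cite{Ka14}. Judged on its own terms, your proposal has a genuine gap exactly where you flag it. The preliminary reductions are fine: primitivity of $M(f)$ gives minimality/quasi-periodicity of $\Lambda(f)$, so carrying one leaf is equivalent to carrying every finite path of the common language and hence all leaves; passing to the subgraph $\Delta'$ spanned by the lifted leaf, using irreducibility to get a surjective immersion $p\colon\Delta'\to\Gamma$, and noting that infinite index forces $p$ not to be a covering are all correct and standard. But the step in which cleanness is actually used --- that a surjective, non-covering immersion of a finite core graph cannot carry $\Lambda(f)$ once all Whitehead graphs are connected --- is essentially the whole content of the proposition, and you do not prove it; you defer it to \cite{Ka14}, which is the very source being quoted. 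So none of the essential argument is supplied.

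Moreover, the combinatorial configuration you isolate is not by itself contradictory. A connected graph $Wh_\Gamma(v_0,f)$ can perfectly well have its edge set covered by subsets $S_w$ of its vertex set with one of them ($S_{w_0}$) proper: nothing in your setup forces any single $S_w$ to consist of all directions at $v_0$, nor must the turns taken by the lifted leaf at a fixed vertex $w$ span a connected subgraph, since a turn of $Wh_\Gamma(v_0,f)$ only needs to be realized at \emph{some} lift of $v_0$. Hence connectivity of the Whitehead graph at one vertex together with surjectivity of $p$ does not yield a contradiction; one needs further dynamical input (for instance iterating $f$ and exploiting the train track property and bounded cancellation, or the uniform recurrence constants of the leaf, as in the actual argument of \cite{Ka14}) to rule out such a carrying graph. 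That missing input is the crux of the proposition, so the proposal should be regarded as an outline of the easy reductions rather than a proof.
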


Recall that an automorphism $\phi\in\Out(F_N)$ is called
\emph{atoroidal} if there do not exist $n\ge 1$ and $1\ne g\in F_N$
such that $\phi^n([g])=[g]$, where $[g]$ is the conjugacy class of $g$
in $F_N$. We need the following slightly more general notion (see
Definition~13.1 of Feighn and Handel~\cite{FH}):

\begin{defn}
We say that $\phi\in\Out(F_N)$ is \emph{primitively atoroidal} if there do not exist $n\ge 1$ and a primitive element $g\in F_N$ such that $\phi^n([g])=[g]$.
\end{defn}

Recall the following well-known result of Bestvina-Handel~\cite{BH92}:
\begin{prop}\label{prop:BH92}
Let $\phi\in \Out(F_N)$ be non-atoroidal, where $N\ge 2$.

Then $\phi$ is fully irreducible if and only if there exist a compact connected (possible non-orientable) surface $\Sigma$ with one boundary component and with $\pi_1(\Sigma)=F_N$ and a pseudo-Anosov homeomorphism $g:\Sigma\to\Sigma$ such that $\phi=g_\#$.  In this case $\phi$ is primitively atoroidal, and $[w]$ is a nontrivial $\phi$-periodic conjugacy class in $F_N$ if and only if $w=[u^k]$ for some $k\ne 0$, where 
$u$ is the peripheral curve of $\Sigma$. 
\end{prop}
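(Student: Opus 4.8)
The plan is to reduce this to the classical surface-theoretic picture of non-atoroidal fully irreducible automorphisms, which goes back to Bestvina--Handel~\cite{BH92}. First I would recall that a non-atoroidal $\phi\in\Out(F_N)$ has, by definition, a nontrivial $\phi$-periodic conjugacy class $[g]$ with $g\ne 1$. If $\phi$ is moreover fully irreducible, then $[g]$ cannot lie inside any proper $\phi^n$-periodic free factor; combined with the indivisible train track technology (in particular Proposition~\ref{prop:key}), the $\phi$-periodic conjugacy classes are exactly the cyclic concatenations of pINPs in an expanding irreducible train track representative $f:\Gamma\to\Gamma$. The crux is to show that such a representative necessarily has a \emph{geometric} structure: there is a compact surface $\Sigma$ with one boundary component, $\pi_1(\Sigma)\cong F_N$, and a homeomorphism $g:\Sigma\to\Sigma$ with $\phi=g_\#$, with the peripheral element $u$ representing (a root of) the $\phi$-periodic class. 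For the forward direction one invokes the Bestvina--Handel analysis: the pINPs of $f$ assemble into a single closed curve $\gamma$ (the Nielsen path data cannot split $F_N$ as a free product respected by $\phi$-power, else $\phi$ would fix a proper free factor's conjugacy class), and thickening $\Gamma$ along this Nielsen loop produces the surface $\Sigma$ whose mapping class is $\phi$. One then checks, via the Nielsen--Thurston classification, that the induced homeomorphism must be pseudo-Anosov: if it had a reduction curve or a periodic (non-peripheral) piece, one could extract a proper $\phi$-invariant free factor, contradicting full irreducibility; and if it were finite order, $\phi$ would have finite order in $\Out(F_N)$, which no fully irreducible of infinite order can be. The converse direction is the easier one: if $\phi=g_\#$ for a pseudo-Anosov $g$ on such a $\Sigma$, then full irreducibility of $\phi$ follows because an invariant proper free factor would be carried by an essential subsurface or would be a cyclic peripheral factor, both incompatible with $g$ being pseudo-Anosov on a one-holed surface.

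Concretely, the steps I would carry out, in order, are: \emph{(i)} Observe that a fully irreducible $\phi$ has infinite order in $\Out(F_N)$ and admits an expanding irreducible train track representative $f:\Gamma\to\Gamma$ (Bestvina--Handel). \emph{(ii)} Using non-atoroidality and Proposition~\ref{prop:key}, produce a $\phi$-periodic conjugacy class as a cyclic concatenation of pINPs; argue that, after replacing $\phi$ by a power and passing through the graph $S(f)$ of Section~3 together with Proposition~\ref{prop:comp}, full irreducibility forces the image subgroup structure to be as restrictive as possible — there is essentially one Nielsen loop up to powers. \emph{(iii)} Invoke the Bestvina--Handel ``geometric'' criterion (a train track map whose only structure obstructing cleanness is a closed Nielsen path is geometric) to get the surface $\Sigma$ with one boundary component, $\pi_1(\Sigma)=F_N$, a homeomorphism $g$ with $g_\#=\phi$, and the peripheral curve $u$ corresponding to the Nielsen loop. \emph{(iv)} Run the Nielsen--Thurston trichotomy on $g$: rule out finite order (by infinite order of $\phi$) and rule out the reducible case (a reduction multicurve on a one-holed surface cuts off an essential subsurface whose $\pi_1$ is a proper free factor whose conjugacy class some power of $\phi$ preserves, contradiction), leaving $g$ pseudo-Anosov. \emph{(v)} Read off the periodic conjugacy classes: on a one-holed surface the only isotopy classes of essential simple closed curves fixed (up to isotopy and powers) by a pseudo-Anosov map are powers of the boundary, so $[w]$ is $\phi$-periodic iff $w$ is conjugate to $u^k$ for some $k\ne 0$; in particular the only periodic primitive class would have to be $u^{\pm 1}$, and (taking $N\ge 2$ so $u$ is not primitive in $F_N$) we conclude $\phi$ is primitively atoroidal. \emph{(vi)} Finally do the converse: from a pseudo-Anosov $g$ on such a $\Sigma$, deduce full irreducibility of $\phi=g_\#$, using that an invariant proper free factor of $F_N=\pi_1(\Sigma)$ would, after passing to a power, be either peripheral-cyclic (impossible, as free factors of rank $\geq 1$ in $F_N$ with $N\geq 2$ are not the peripheral subgroup) or carried by a proper essential subsurface (impossible for pseudo-Anosov on a one-holed surface, which has no proper invariant essential subsurface).

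I expect the main obstacle to be step~\emph{(iii)}, extracting the surface and the homeomorphism cleanly — i.e., verifying the hypotheses of the Bestvina--Handel geometric criterion from the pINP data and the full irreducibility hypothesis, and controlling the passage to powers of $\phi$ so that the resulting surface homeomorphism genuinely represents $\phi$ and not merely a power of it. The Nielsen--Thurston step~\emph{(iv)} is standard once the surface is in hand, and the bookkeeping of periodic conjugacy classes in step~\emph{(v)} is routine for one-holed surfaces; but one must be careful that ``peripheral'' on the $\pi_1$ level matches the Nielsen loop produced in step~\emph{(ii)}, and that the statement ``$\phi$ is primitively atoroidal'' correctly uses that $u$, being the product of commutators (orientable case) or a product of squares (non-orientable case) that equals the peripheral word, is never a primitive element of $F_N$ when $N\ge 2$.
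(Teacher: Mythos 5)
The paper does not actually prove Proposition~\ref{prop:BH92}: it is introduced with ``Recall the following well-known result of Bestvina--Handel'' and is simply cited to \cite{BH92}, so there is no in-paper argument to compare yours against. What you have written is an outline of the classical Bestvina--Handel proof itself (expanding irreducible/stable train tracks, the periodic conjugacy class forcing a closed indivisible Nielsen path, thickening to a one-holed surface carrying $\phi$, then the Nielsen--Thurston trichotomy to see the homeomorphism is pseudo-Anosov, and the easy converse). That is the standard and correct route; since your step~(iii) is precisely the hard content of \cite{BH92} (their stable train track and geometricity analysis), your proposal is in effect a reduction to \cite{BH92} rather than an independent proof, which is consistent with -- indeed more detailed than -- how the paper treats the statement.

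Two points would need tightening if you wanted this to stand as a proof rather than a citation. First, in step~(v) the claim about $\phi$-periodic conjugacy classes concerns \emph{all} nontrivial conjugacy classes, not only those represented by essential simple closed curves; you need the standard fact that a pseudo-Anosov homeomorphism of a compact surface has no periodic non-peripheral free homotopy classes at all (for instance because lengths of non-peripheral geodesics grow like $\lambda^n$ under iteration in the invariant flat or hyperbolic metric). Granting that, your primitive-atoroidality argument is fine: $u$, hence every $u^k$ with $k\ne 0$, vanishes in $H_1(F_N;\mathbb{Z}/2\mathbb{Z})$, while primitive elements do not. Second, in steps~(iv) and~(vi) the assertions that an invariant curve system or an invariant proper free factor is realized by a proper essential subsurface (or is peripheral-cyclic) are not automatic and need justification; for the converse direction a clean alternative is to argue, in the spirit of Proposition~\ref{prop:carry}, that a finitely generated infinite-index periodic subgroup would have to carry leaves of the filling invariant lamination of the pseudo-Anosov, which is impossible. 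Neither issue is a wrong turn, but as written they are gaps relative to a self-contained argument.
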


\begin{rem}\label{rem:BH92}
Thus all fully irreducible elements of $\Out(F_N)$ are primitively
atoroidal. Moreover, if $\phi\in\Out(F_N)$ admits periodic conjugacy
classes $[w], [z]$ such that for all nonzero $t,s$ we have $[w^t]\ne
[z^s]$, then $\phi$ is not fully irreducible. 
\end{rem}

The following result provides a unified train track characterization
of all fully irreducibles, both atoroidal and non-atoroidal ones,
following our approach in \cite{Ka14}.

\begin{thm}\label{thm:clean}
Let $N\ge 2$ and let $\phi\in \Out(F_N)$ be arbitrary.
Then the following are equivalent:

\begin{enumerate}
\item The automorphism $\phi$ is fully irreducible.
\item The automorphism $\phi$ is primitively atoroidal and there exists a weakly clean train track representative $f:\Gamma\to\Gamma$ of $\phi$.
\item The automorphism $\phi$ is primitively atoroidal, there exists a weakly clean train track representative of $\phi$, and every train track representative of $\phi$ is weakly clean.
\end{enumerate}
 
\end{thm}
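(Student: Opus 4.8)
The plan is to prove Theorem~\ref{thm:clean} by establishing the cycle of implications $(1)\Rightarrow(3)\Rightarrow(2)\Rightarrow(1)$, with the implication $(3)\Rightarrow(2)$ being a triviality once one observes that a fully irreducible (hence atoroidal-or-pseudo-Anosov) automorphism always admits at least one train track representative, e.g.\ via Bestvina--Handel~\cite{BH92}. So the real content is $(1)\Rightarrow(3)$ and $(2)\Rightarrow(1)$.

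For $(1)\Rightarrow(3)$: assume $\phi$ is fully irreducible. By Remark~\ref{rem:BH92} (which packages Proposition~\ref{prop:BH92}) $\phi$ is primitively atoroidal. Since $\phi$ is fully irreducible, so is $\phi^k$ for every $k\ge1$, so $\phi$ has no periodic proper free factor and hence admits an expanding irreducible train track representative $f:\Gamma\to\Gamma$ (again Bestvina--Handel, plus the standard fact that full irreducibility forces expansion, $\lambda(f)>1$). I claim any such $f$ is weakly clean, i.e.\ every Whitehead graph $Wh_\Gamma(v,f)$ is connected. This is the classical argument (cf.\ \cite[Prop.~4.4]{Ka14}, \cite[Lemma~9.9]{Pf}): if some $Wh_\Gamma(v,f)$ were disconnected, one uses a disconnection of the Whitehead graph at $v$ to produce a proper free factor system invariant under a power of $\phi$ — concretely, a disconnected Whitehead graph lets one build a free splitting (or a proper free factor) carried by a sub-lamination, contradicting full irreducibility. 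I would cite/adapt the relevant statement from \cite{Ka14} rather than reprove it, since Theorem~\ref{thm:clean'} is billed as following our approach in \cite{Ka14}. This gives ``every train track representative is weakly clean,'' and existence of one is immediate, so $(3)$ holds.

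For $(2)\Rightarrow(1)$: assume $\phi$ is primitively atoroidal and $f:\Gamma\to\Gamma$ is a weakly clean train track representative; by Proposition~\ref{prop:DKL}, $f$ is in fact clean, so Proposition~\ref{prop:carry} is available. Suppose toward a contradiction that $\phi$ is not fully irreducible: then some $\phi^n$ fixes the conjugacy class of a proper free factor $[B]$ of $F_N$, with $1\ne B$ and $B$ of infinite index in $F_N$. Since $B$ is finitely generated, let $\Delta_B$ be its Stallings core in the appropriate cover of $\Gamma$. On one hand, by Proposition~\ref{prop:carry}, $\Delta_B$ carries no leaf of $\Lambda(f)$. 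On the other hand, I want to argue that a $\phi^n$-invariant conjugacy class of free factor $[B]$ must ``carry'' a leaf of the stable lamination $\Lambda(f)$, yielding the contradiction. The mechanism: $\Lambda(f)$ is $\phi$-invariant (indeed $f_\#$-invariant) and is the attracting lamination; if $\phi^n[B]=[B]$ with $B$ a proper free factor, then $B$ (after passing to the right representative in $\Aut(F_N)$, as in Proposition~\ref{prop:comp}) is left invariant and the restriction dynamics, combined with the fact that $B$ is \emph{not} of finite index, force some leaf of $\Lambda(f)$ to lie in $B$ — this is where primitive atoroidality enters, to handle the degenerate case where $B$ is cyclic (rank one): a rank-one invariant free factor is exactly a primitive periodic conjugacy class, excluded by hypothesis, so $B$ has rank $\ge2$, and then the standard attracting-lamination argument (a proper free factor invariant under a power must contain a leaf of $\Lambda$, because otherwise $\phi$ restricted to the complementary structure would be reducible in a way that contradicts the expanding irreducible dynamics on $\Gamma$) applies.

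The main obstacle I anticipate is making the last step — ``an invariant proper free factor of rank $\ge 2$ carries a leaf of $\Lambda(f)$'' — fully rigorous and properly cited, since this is precisely the delicate interface between the subgroup structure of $F_N$ and lamination dynamics, and the rank-one exceptional case is exactly what forced the introduction of the \emph{primitively atoroidal} hypothesis (rather than plain atoroidal) in the first place. I would handle it by invoking the relevant result from \cite{Ka14} (our Proposition~4.4 there, or its proof) together with Proposition~\ref{prop:BH92}/Remark~\ref{rem:BH92} to dispatch the non-atoroidal situation, so that the only new ingredient here is the bookkeeping that upgrades ``atoroidal'' to ``primitively atoroidal'' uniformly across both cases. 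The other steps (existence of train track representatives, $\lambda(f)>1$, invariance of $\Lambda(f)$) are standard and I would state them with references rather than proofs.
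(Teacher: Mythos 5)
Your implication $(1)\Rightarrow(3)$ (and the trivial $(3)\Rightarrow(2)$) matches the paper: primitive atoroidality from Remark~\ref{rem:BH92}, existence of an expanding irreducible representative from Bestvina--Handel, and weak cleanness of \emph{every} train track representative by the argument of Proposition~4.4 of \cite{Ka14}. The problem is in $(2)\Rightarrow(1)$, where the step you yourself flag as the ``main obstacle'' is exactly the content of the implication, and neither your sketched mechanism nor your proposed citations close it. The assertion ``an invariant proper free factor of rank $\ge 2$ carries a leaf of $\Lambda(f)$'' is not automatic: if every conjugacy class inside the invariant factor $H$ were $\phi$-periodic, then $H$ would carry no leaf at all and there would be no conflict with Proposition~\ref{prop:carry}. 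So the real work is to produce an element $1\ne h\in H$ whose conjugacy class is \emph{not} $\phi$-periodic, and then to convert the growth of $h$ into a carried leaf. The paper does this by taking $H$ of \emph{minimal rank} among $\phi$-periodic proper free factors (rank $\ge 2$ by primitive atoroidality), using minimality to get a representative $\Psi$ of some $\phi^k$ with $\Psi(H)=H$ and $\Psi|_H$ fully irreducible, hence some $h\in H$ with $[h]$ non-periodic; then the cyclically tightened circuits $f^n(\gamma)$ representing $\phi^n[h]$ lift to the Stallings core $\Delta_H$, decompose into boundedly many subsegments of paths $f^{t}(e)$, at least one of which has length tending to infinity, and a compactness argument produces a leaf of $\Lambda(f)$ carried by $\Delta_H$, contradicting Proposition~\ref{prop:carry}. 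Your gesture at ``reducibility of the complementary structure contradicting the expanding irreducible dynamics on $\Gamma$'' is not the mechanism and does not substitute for these two steps.

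Your fallback plan for the delicate case is also circular or inapplicable. Proposition~\ref{prop:BH92} describes \emph{non-atoroidal fully irreducible} automorphisms; in the direction $(2)\Rightarrow(1)$ full irreducibility is precisely the conclusion, so you cannot invoke the surface/pseudo-Anosov picture to ``dispatch the non-atoroidal situation.'' Likewise, the criterion of \cite[Proposition~4.4]{Ka14} assumes atoroidality (hyperbolicity), which is strictly stronger than the hypothesis ``primitively atoroidal'' available here --- handling exactly the non-atoroidal fully irreducibles is the reason this theorem exists, so that citation cannot carry the weight. (A small additional remark: one can in fact avoid the minimal-rank device by noting that a basis element of a proper free factor $H$ is primitive in $F_N$, hence has non-periodic conjugacy class by primitive atoroidality; but one still needs the subsequent iteration-and-compactness argument showing $\Delta_H$ carries a leaf, which is absent from your proposal.)
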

\begin{proof}

We first show that (1) implies (3). Thus suppose that $\phi$ is fully
irreducible. Note that by Remark~\ref{rem:BH92} $\phi$ is primitively
atoroidal. Also, by a result of Bestvina-Handel~\cite{BH92}, there exists an expanding irreducible train track representative of $\phi$.
Let $f:\Gamma\to\Gamma$ be any train track representative of
$\phi$. Since $\phi$ is fully irreducible, exactly the same argument
as in the proof of implication (1)$\Rightarrow$(2) of Proposition~4.4 of \cite{Ka14} shows that $f$ is weakly clean.  Thus (1) indeed implies (3).

Part (3) directly implies part (2). 

Thus suppose that (2) holds and that $f:\Gamma\to\Gamma$ is a weakly
clean train track representative of a primitively atoroidal
automorphism $\phi\in \Out(F_N)$.  Then $f$ is clean by Proposition~\ref{prop:DKL}. Suppose that $\phi$ is not fully irreducible. Let $H\le F_N$ be a proper free factor of minimal rank such that the conjugacy class $[H]$ of $H$ is $\phi$-periodic. Then $rank(H)\ge 2$ since the case $rank(H)=1$ is ruled out  by the assumption that $\phi$ is primitively atoroidal. The minimality assumption on $H$ implies that there exists a representative $\Psi\in \Aut(F_N)$ of $\phi^k$ for some $k\ge 1$ such that $\Psi(H)=H$ and such that $\Psi|_H$  is a fully irreducible automorphism of $H$.  Hence there exists $1\ne h\in H$ such that the conjugacy class $[h]$ of $h$ is not $\phi$-periodic.
Let $\Delta_H$ be the $\Gamma$-Stallings core for $H$.  Let $\gamma$ be an immersed circuit in $\Gamma$ representing the conjugacy class of $h$. Since $[h]$ is not $\phi$-periodic, the cyclically tightened length of $f^n(\gamma)$ tends to $\infty$ as $n\to\infty$. Let $s$ be the simplicial length of $\gamma$, so that $\gamma=e_1\dots e_s$. 
 Let $\gamma_n$ be the immersed circuit in $\Gamma$ given by the cyclically tightened form of $f^{n}(\gamma)$. We can obtain $\gamma_n$ by cyclic tightening of the path $f^{n}(e_1)\dots f^{n}(e_s)$. Thus $\gamma_n$ is a concatenation of $\le s$ segments, each of which is a subsegment of $f^{n}(e)$ for some $e\in E\Gamma$. Since the simplicial length of $\gamma_n$ goes to infinity as $n\to\infty$, the length of at least one of these segments tends to infinity as $n\to\infty$. Since $\phi[H]=[H]$, the circuit $\gamma_n$ lifts to a circuit in $\Delta_H$. Hence there exists a sequence of segments $\alpha_n$ in $\Gamma$ such that each $\alpha_n$ lifts to a path in $\Delta_H$, such that the simplicial length of $\alpha_n$ goes to infinity as $n\to\infty$ and such that there are $e_n\in E\Gamma$ and  $t_n\ge 1$ with the property that $\alpha_n$ is a subpath of $f^{t_n}(e_n)$. Moreover, since $E\Gamma$ is finite, after passing to a subsequence we can even assume that $e_n=e\in E\Gamma$ for all $n\ge 1$. By a standard compactness argument, it follows that $H$ carries a leaf of $\Lambda(f)$, contrary to the conclusion of Proposition~\ref{prop:carry}. 
\end{proof}

\section{Finding INPs}

In this section we give an alternative algorithm (to that of Turner~\cite{Turner})
for finding all INPs in an expanding train track map. Our algorithm
is based on more direct and elementary considerations than that of
Turner, although it is possible that Turner's algorithm is
computationally more efficient. 

The algorithm presented here is known in the folklore. Essential ideas for this algorithm, particularly regarding bounding the length of the ``legs'' of an INP using the bounded cancellation constant, are already present in the 1992 paper of Bestvina and Handel~\cite{BH92}.  The structure of the algorithm is similar to that of the algorithm for finding INPs in a CT train track obtained by Feighn and Handel in \cite{FH}. However, the computational complexity analysis that we provide here, is new, and this analysis plays an essential role in the proof of our main result, Theorem~\ref{thm:A}.

For a graph map $g:\Delta\to\Delta$ (where $\Delta$ is a finite graph), we denote $||g||:=\max_{e\in
  E\Delta} |g(e)|$. Also, for a finite graph $\Delta$ we denote by
$m(\Delta)$ the number of topological edges of $\Gamma$.

Note that if $\Delta$ is a finite connected graph with all vertices of
degree $\ge 3$ and if $r$ is the rank
of the free group $\pi_1(\Delta)$ then $r\le m(\Delta)\le 3r-3$.

\begin{thm}\label{thm:alg-inp}
The exists a deterministic algorithm $\mathfrak A$ with the following property.

Given an expanding standard train track map $f:\Gamma\to\Gamma$ (where $\Gamma$
is a finite connected graph with all vertices of degree $\ge 3$), the
algorithm $\mathfrak A$ determines whether or not $f$ has any INPs and if yes, finds all the INPs for $f$. The algorithm terminates in
time $O(m^5||f||^{3m+6}\log m\log ||f||)$, where $m=m(\Gamma)$. In particular, if $m(\Gamma)$ is fixed, the algorithm runs in polynomial time in $||f||$. 
\end{thm}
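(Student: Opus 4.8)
The plan is to reduce the problem of finding all INPs of an expanding standard train track map $f:\Gamma\to\Gamma$ to a finite, explicitly bounded search, and then account for the cost of that search. First I would recall the structural fact (Proposition~\ref{prop:key}(2), here in the period-one case, cf.\ Proposition~\ref{prop:key}(3)) that any INP $\eta$ has the form $\eta=\alpha\beta^{-1}$ where $\alpha,\beta$ are nontrivial legal paths, $t(\alpha)=t(\beta)=v\in V\Gamma$, the turn between $\alpha$ and $\beta$ at $v$ is illegal, and $o(\alpha),o(\beta)$ are fixed points of $f$ lying in the interiors (or at vertices) of edges of $\Gamma$. The crucial quantitative input is a bounded cancellation constant: there is a constant $C=C(f)$, bounded explicitly in terms of $m=m(\Gamma)$ and $\|f\|$, such that for any legal concatenation $\mu\nu$ the amount of cancellation in $f(\mu)f(\nu)$ is at most $C$. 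Applying the train track property $f_\#(\eta)=\eta$ together with the fact that $f$ expands legal paths by at least the factor $\lambda=\lambda(f)>1$, one gets an inequality of the shape $|\alpha|\le \frac{C}{\lambda-1}$ (and similarly for $\beta$): if a leg were much longer than $C/(\lambda-1)$, iterating $f$ would force its $f$-image to be strictly longer than the original $\eta$ after cancellation, contradicting $f_\#(\eta)=\eta$. Converting $\lambda$-estimates into combinatorial ones via $\|f\|$ (since $\lambda\le\|f\|$ and $\lambda-1$ can be bounded below in terms of $m$ and $\|f\|$), this bounds the simplicial lengths $|\alpha|,|\beta|$ by an explicit polynomial-degree-in-exponent expression, which is where the $\|f\|^{3m+6}$-type factor will ultimately come from.

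Next I would describe the actual search. Having a bound $L=L(m,\|f\|)$ on the length of each leg, one enumerates all candidate pairs $(\alpha,\beta)$ of legal edge-paths (more precisely, legal paths whose endpoints $o(\alpha),o(\beta)$ are among the finitely many $f$-periodic points — here fixed points — of $\Gamma$, which are themselves algorithmically locatable as solutions of affine equations coming from the standard structure on each edge) of simplicial length $\le L$, sharing a common terminal vertex $v$, and meeting in an illegal turn at $v$. For each such candidate $\eta=\alpha\beta^{-1}$ one checks directly whether $f_\#(\eta)=\eta$: compute $f(\alpha)$ and $f(\beta)$ (edge-paths of length $\le L\|f\|$), tighten $f(\alpha)f(\beta)^{-1}$, and compare with $\eta$; all of this is elementary manipulation of edge-paths of controlled length. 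The candidates that survive are exactly the INPs. Fixed points in edge interiors are handled using the standard structure: on an edge $e$ with $f(e)=e_1\cdots e_n$ and $e=e_i$ for some $i$, the subinterval $J_i$ maps affinely onto $e$, so $f$ restricted there is an affine contraction or expansion of $(0,1)$ with a unique fixed point, computable exactly; one records these finitely many candidate leg-origins.

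Finally I would assemble the complexity estimate. The number of legal edge-paths of length $\le L$ starting at a given periodic point is at most (roughly) $(2m)^L$, but $L$ is logarithmic in $\|f\|$ up to factors depending on $m$ — indeed $L = O(\log\|f\| / \log\lambda)$ refined to a bound of the form $L=O(m\log\|f\|)$ after bounding $\lambda-1$ below — so $(2m)^L$ becomes polynomial in $\|f\|$ of degree $O(m)$, giving the $\|f\|^{3m+6}$ order of magnitude once all the edge-count and cancellation-constant factors are folded in. For each candidate the verification $f_\#(\eta)\overset{?}{=}\eta$ costs $O(L\|f\|\cdot \text{(length arithmetic)})$, and multiplying the count of candidates by the per-candidate cost, and being careful with the $\log m$ and $\log\|f\|$ factors coming from arithmetic on integers of size $\|f\|^{O(m)}$ and from bookkeeping over the $O(m)$ edges and periodic points, yields the stated bound $O(m^5\|f\|^{3m+6}\log m\log\|f\|)$.

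The main obstacle, and the step deserving the most care, is making the bound on the leg-lengths $|\alpha|,|\beta|$ genuinely \emph{explicit and polynomial-in-exponent}: this requires (i) an effective bounded cancellation constant $C(f)$ with a clean dependence on $m$ and $\|f\|$, and (ii) an effective lower bound on $\lambda(f)-1$ in terms of $m$ and $\|f\|$, since the naive bound $C/(\lambda-1)$ blows up when $\lambda$ is close to $1$. The standard way around (ii) is that for an expanding irreducible Perron–Frobenius matrix of size $m$ with entries at most $\|f\|$, the gap $\lambda-1$ is bounded below by something like $m^{-O(m)}$ (or one works instead with $\lambda^k$ for a controlled power $k$ where the expansion is already bounded away from $1$); turning this into the combinatorial leg-length bound, and then propagating it honestly through the counting argument, is the technical heart of the proof.
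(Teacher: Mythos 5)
There is a genuine gap, and it sits exactly at the enumeration step that carries the whole complexity claim. Your candidate set consists of all legal paths of combinatorial length at most $L$ emanating from fixed points, of which there may be on the order of $(2m)^L$, and you make this polynomial in $||f||$ by asserting that $L=O(m\log ||f||)$. That assertion is unjustified, and it is not what the bounded cancellation argument yields. From $C_f=m||f||$ and the inequality $|\alpha|\le C/(\lambda-1)$ one gets a length bound in the Perron--Frobenius eigenmetric; after converting to combinatorial length (the conversion costs a factor controlled by the ratio of extreme eigenvector entries, which can itself be of size $||f||^{O(m)}$), or, as the paper does, after replacing $f$ by $f^m$ (Lemma~\ref{lem:2}) and using $C_{f^m}\le m||f||^m$, the resulting bound on the legs is of the form $m||f||^m+4$ (Corollary~\ref{cor:INP-len}): polynomial in $||f||$ of degree $m$, not logarithmic. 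Even in the most favorable case $\lambda\ge 2$ the bound is still linear in $||f||$. With any such $L$, the count $(2m)^L$ is exponential (or worse) in $||f||$, so the brute-force enumeration over all legal paths of length $\le L$ cannot terminate in the stated time $O(m^5||f||^{3m+6}\log m\log ||f||)$; the flaw is not in the per-candidate verification but in the size of the candidate set.

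The missing idea, which is how the paper obtains a polynomially sized candidate set, is the eigenray structure of the legs. For an INP $\eta=\alpha\beta^{-1}$ one has $f(\alpha)=\alpha\gamma$ and $f(\beta)=\beta\gamma$, so each leg starts at a fixed point in a direction fixed by the derivative map and is an \emph{initial segment of the eigenray} determined by that fixed direction. After subdividing $\Gamma$ at $Fix(f)$ there are only $O(m||f||)$ fixed directions; for each one a sufficiently long initial segment of its eigenray (of length about $m||f||^{m+1}$, dictated by Corollary~\ref{cor:INP-len}) is computed by iterating $f$, and the candidate legs are then just the initial segments of these finitely many computed rays. This replaces the $(2m)^{\Theta(L)}$ count by $O(m^4||f||^{2m+4})$ candidate pairs, and from there your verification step (tighten $f(\alpha)f(\beta)^{-1}$ and compare with $\alpha\beta^{-1}$), your treatment of fixed points in edge interiors via the affine structure, and your bookkeeping of $\log m$ and $\log ||f||$ factors all go through essentially as you describe. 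So the proposal needs to be repaired by inserting the fixed-direction/eigenray lemma and re-doing the count; as written, the complexity analysis does not establish the theorem.
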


\begin{conv}\label{conv:Gamma}
For the remainder of this section we will assume that
$f:\Gamma\to\Gamma$ is an expanding standard train track map, where $\Gamma$ is
a finite graph where every vertex has degree $\ge 3$. We denote
$m:=m(\Gamma)$.

\end{conv}

\begin{rem}
In the setting of Theorem~\ref{thm:alg-inp} the number $m$ of
topological edges of $\Gamma$ is not fixed, but rather is a variable
parameter of the problem. Therefore, in various algorithmic
computations below, we first need
  to index the edges of $\Gamma$ by binary integers, which introduces
  a factor of $\log m$ in various computations. In fact, describing
  the graph map $f:\Gamma\to\Gamma$ itself requires time $O(||f||m\log
  m)$ for that reason. Similar considerations apply to the subdivision
  $\Gamma'$ of $\Gamma$ discussed below.
\end{rem}

Recall that for a path (not necessarily an edge-path) $\gamma$ in some graph,
we denote by $|\gamma|$ the combinatorial length of $\gamma$, that is
the combinatorial length of the minimal edge-path $\hat\gamma$
containing $\gamma$ as a subpath.

We need the following quantitative version of the statement known as
``Bounded Cancellation Lemma'' that was originally proved in ~\cite{Cooper}.

\begin{propdfn}[Bounded Cancellation Constant]\cite[Lemma 3.1]{BFH97}\label{prop:bc}

Let $f:\Gamma\to\Gamma$ and $m$ be as Convention~\ref{conv:Gamma}. Put $C_f:=m||f||$. 

Consider  any tight
path $\gamma=\alpha\beta$ in $\Gamma$ (where the endpoints of
$\alpha,\beta$ are not assumed to be vertices). Let $\alpha'$ be the
tightened form of $f(\alpha)$, and let $\beta'$ be the tightened form
of $f(\beta)$. 

Then for the maximal terminal segment $\gamma$
of $\alpha'$, that cancels with an initial segment of $\beta'$ when the
path $\alpha'\beta'$ is tightened, we have $|\gamma|\le C_f$.
\end{propdfn}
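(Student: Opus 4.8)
This is Cooper's Bounded Cancellation Lemma in the sharpened form of \cite[Lemma~3.1]{BFH97}, so the plan is to run the standard universal-cover argument while keeping explicit track of the constant. First I would pass to the universal cover $\pi\colon\widetilde\Gamma\to\Gamma$, a simplicial tree in which every edge has length $1$, and fix a lift $\widetilde f\colon\widetilde\Gamma\to\widetilde\Gamma$ of $f$. Two facts set the stage: $\widetilde f$ sends every edge of $\widetilde\Gamma$ to a legal, hence tight, hence (being in a tree) embedded edge-path of at most $\|f\|$ edges; and in a tree any path joining two points contains the geodesic between them. Lift $\gamma=\alpha\beta$ to a geodesic $\widetilde\gamma=\widetilde\alpha\,\widetilde\beta$ with $\widetilde\alpha$ from $\widetilde x$ to $\widetilde p$ and $\widetilde\beta$ from $\widetilde p$ to $\widetilde y$. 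Then $\alpha'$ lifts to $[\widetilde f\widetilde x,\widetilde f\widetilde p]$, $\beta'$ lifts to $[\widetilde f\widetilde p,\widetilde f\widetilde y]$, and the amount of cancellation equals $d(\widetilde f\widetilde p,[\widetilde f\widetilde x,\widetilde f\widetilde y])=d(\widetilde f\widetilde p,\widetilde z)$, where $\widetilde z$ is the median in $\widetilde\Gamma$ of $\widetilde f\widetilde x,\widetilde f\widetilde p,\widetilde f\widetilde y$; one may assume $\widetilde z$ is a vertex, since otherwise $\widetilde f\widetilde p\in[\widetilde f\widetilde x,\widetilde f\widetilde y]$ and there is no cancellation. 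The combinatorial length of the cancelled segment $\delta$ exceeds $d(\widetilde f\widetilde p,\widetilde z)$ only because of partial-edge effects at the ends, treated at the very end, so it suffices to bound $L:=d(\widetilde f\widetilde p,\widetilde z)$ by $m\|f\|$.

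The main step is a ``peeling'' of the geodesic $[\widetilde z,\widetilde f\widetilde p]=\widetilde\epsilon_1\cdots\widetilde\epsilon_L$, with consecutive vertices $\widetilde z=\widetilde q_0,\widetilde q_1,\dots,\widetilde q_L=\widetilde f\widetilde p$. Since $\widetilde z\in[\widetilde f\widetilde x,\widetilde f\widetilde p]\subseteq\widetilde f(\widetilde\alpha)$, some point of $\widetilde\alpha$ maps to $\widetilde z$; taking the one closest to $\widetilde p$ and restricting $\widetilde\alpha$ to the subpath from that point to $\widetilde p$, the $\widetilde f$-image of this subpath meets $\widetilde z$ only at its start, hence by connectedness lies in the single branch of $\widetilde\Gamma\smsm\{\widetilde z\}$ containing $\widetilde f\widetilde p$, so its first edge is $\widetilde\epsilon_1$. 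Iterating this (at stage $k$, pass to the subpath beginning at the point closest to $\widetilde p$ that maps to $\widetilde q_{k-1}$) produces, for every $k\le L$, an edge $\widetilde g_k$ of $\widetilde\alpha$ whose image $\widetilde f(\widetilde g_k)$ traverses $\widetilde\epsilon_k$. The extraction points move monotonically toward $\widetilde p$, so the $\widetilde g_k$ occur in weakly monotone order along $\widetilde\alpha$; and since $\widetilde f(\widetilde g_k)$ is an embedded edge-path of at most $\|f\|$ edges which meets the geodesic $[\widetilde z,\widetilde f\widetilde p]$ in a sub-interval, at most $\|f\|$ consecutive indices $k$ can share the same $\widetilde g_k$. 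Thus $\widetilde g_1,\dots,\widetilde g_L$ contain at least $L/\|f\|$ distinct edges of $\widetilde\alpha$.

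The hard part — the reason this is more than bookkeeping — is that distinct edges of the lifted geodesic $\widetilde\alpha$ need not project to distinct edges of $\Gamma$, since $\widetilde\alpha$ may traverse many $\pi$-lifts of a single edge; so one cannot immediately conclude $L/\|f\|\le m$. This is exactly where the argument of \cite{BFH97} does its work: using the $f_*$-equivariance of $\widetilde f$ together with a length-reduction/minimality argument (when two of the selected edges $\widetilde g_k$ are $\pi$-lifts of the same edge of $\Gamma$, one splices out the intervening sub-loop to pass to a strictly shorter configuration — the details of this reduction are the technical core there), one arranges that the selected edges project injectively into $E\Gamma$, whence $L/\|f\|\le m$, i.e.\ $L\le m\|f\|=C_f$. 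Finally I would dispose of the cases where $o(\alpha)$, $o(\beta)$, or the junction point $p$ is interior to an edge: since $f$ is a standard graph map it is affine on subdivision intervals, so the image of a partial edge is an initial or terminal segment of some $f(e)$ with at most $\|f\|$ edges, and in the only subcase that could add cancellation — where such a partial image survives uncancelled — the relevant seam is a subpath of a single $f(e)$, which is legal and hence contributes no cancellation there. Combining the pieces gives $|\delta|\le C_f$.
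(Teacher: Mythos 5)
First, note that the paper does not prove this statement at all: it is quoted, with the explicit constant $C_f=m\|f\|$, as \cite[Lemma 3.1]{BFH97}, so there is no in-paper argument to compare yours against; your proposal has to stand on its own as a proof.

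As it stands it does not. The tree set-up, the identification of the cancellation with $d(\widetilde f\widetilde p,\widetilde z)$, and the peeling argument producing at least $L/\|f\|$ distinct edges of the lifted path $\widetilde\alpha$ are fine. But the decisive step --- passing from ``$\ge L/\|f\|$ distinct edges of $\widetilde\alpha$ in the tree'' to the bound $L\le m\|f\|$, i.e.\ to the claim that one may take the selected edges to project \emph{injectively} to $E\Gamma$ --- is essentially the whole content of the lemma, and you do not prove it: you defer it to an unspecified ``splicing/minimality argument as in \cite{BFH97}.'' That is a citation of the result you are supposed to be proving, not a proof. Moreover, the fix you sketch is doubtful on its face: if two selected edges of $\widetilde\alpha$ are lifts of the same edge of $\Gamma$, excising the subpath of $\widetilde\alpha$ between them changes the endpoint $\widetilde x$, hence $\widetilde f\widetilde x$, hence the median $\widetilde z$, and there is no evident monotonicity guaranteeing that the new configuration exhibits at least as much cancellation; nor is the excised path's projection obviously still tight. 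So the core inequality is unestablished. A cleaner route to the explicit constant, which avoids the injectivity issue entirely, is the folding factorization: equivariantly subdivide the domain tree so that the lift $\widetilde f$ becomes simplicial (the subdivided quotient graph has $\sum_{e\in E\Gamma}|f(e)|\le m\|f\|$ edges), factor $\widetilde f$ as a composition of at most $m\|f\|-m$ equivariant Stallings folds followed by a map sending edges to edges, and use that each fold is $1$-Lipschitz with bounded cancellation at most $1$ and that bounded cancellation is subadditive under composition with $1$-Lipschitz maps; this yields the bound $C_f\le m\|f\|$ directly. Either supply such an argument (or the actual argument of \cite{BFH97} in quantitative form), or present the statement as a quoted result as the paper does; the intermediate option you chose --- a proof whose key step is ``see \cite{BFH97}'' --- leaves a genuine gap. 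The concluding treatment of partial edges is also only asserted, but that is minor by comparison.
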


\begin{lem}\label{lem:2}
Let $g:\Delta\to\Delta$ be an expanding graph map, where $\Delta$ is a
finite graph with $s\ge 1$ topological edges.Then for every
$e\in E\Delta$ we have $|g^s(e)|\ge 2$. 
\end{lem}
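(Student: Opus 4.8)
\textbf{Proof plan for Lemma~\ref{lem:2}.}
The plan is to argue by contradiction, exploiting the fact that an expanding graph map cannot have ``too many'' edges with eventually bounded image length without creating an invariant subgraph on which $g$ is not expanding. Suppose that there is some $e_0 \in E\Delta$ with $|g^s(e_0)| = 1$; note $|g^s(e_0)|\ge 1$ automatically since each $g^n$ is a homotopy equivalence (or at least $|g^s(e_0)|\ne 0$ because $g$ is expanding, so images of edges are nondegenerate paths). I would then track, for $n = 0, 1, \dots, s$, the ``one-edge orbit'': if $|g^n(e_0)| = 1$, then $g^n(e_0)$ is a single (oriented) edge $e_n$, and I want to show the sequence $e_0, e_1, \dots, e_s$ would have to consist of edges each of whose $g$-image is again a single edge, forcing a cycle among at most $s$ topological edges.

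The key step is the following monotonicity observation: for any edge $e$, the sequence $|g^n(e)|$ is non-decreasing in $n$ once we tighten, because $g$ is a train track map... but wait, Lemma~\ref{lem:2} is stated for an arbitrary expanding graph map, not necessarily a train track map, so I cannot assume $|g^n(e)| = |g^{n-1}(e)|$-type multiplicativity. Instead I would use the transition matrix $M = M(g)$, whose $(i,j)$ entry counts (with multiplicity, after tightening is irrelevant here since we only need a lower bound from the un-tightened image) the number of occurrences of $e_j^{\pm 1}$ in $g(e_i)$; then $|g^n(e_i)| \ge \sum_j (M^n)_{ij} = (M^n \mathbf{1})_i$ where $\mathbf{1}$ is the all-ones vector. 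So it suffices to show: if $M$ is a nonnegative integer $s \times s$ matrix such that $(M^n \mathbf 1)_i \to \infty$ for every $i$ (which is exactly the expanding hypothesis, since $|g^n(e_i)|\to\infty$), then $(M^s \mathbf 1)_i \ge 2$ for all $i$. Equivalently, letting $v_n = M^n \mathbf 1$, the sequence of vectors $v_0 = \mathbf 1 \le v_1 \le v_2 \le \cdots$ (entrywise, since $M$ has nonnegative integer entries and each row of $M$ is nonzero because $g$ is expanding, hence $M\mathbf 1 \ge \mathbf 1$, and then $M^{n+1}\mathbf 1 \ge M^n \mathbf 1$ by induction using $M v_n \ge M v_{n-1}$). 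So $(v_n)_i$ is a non-decreasing sequence of positive integers tending to infinity; the only way $(v_s)_i$ could equal $1$ is if $(v_0)_i = (v_1)_i = \cdots = (v_s)_i = 1$.

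So the main obstacle, and the heart of the argument, is to rule out the possibility that $(v_n)_i = 1$ for all $n = 0, \dots, s$. Define $B = \{ i : (v_n)_i = 1 \text{ for } n = 0,\dots,s\}$; I claim that if $i \in B$ then the unique $j$ with $M_{ij} = 1$ (and all other entries of row $i$ zero) again lies in $B$. Indeed for $i \in B$ and $1 \le n \le s$ we have $1 = (v_n)_i = (M v_{n-1})_i = \sum_k M_{ik}(v_{n-1})_k = (v_{n-1})_j$ (using that row $i$ is $\mathbf 1$ in position $j$ and $0$ elsewhere), so $(v_{n-1})_j = 1$ for $n - 1 = 0,\dots,s-1$; combined with $(v_s)_j \ge (v_{s-1})_j = 1$ we would need to also check $(v_s)_j = 1$, and this is where I iterate: the map $i \mapsto j$ on $B$ is well-defined from the constraint at levels $0,\dots,s-1$, and since $|B| \le s$ this map has a periodic point, giving a cycle $i_0 \to i_1 \to \cdots \to i_{p-1} \to i_0$ of edges along which $g$ acts by permutation (each $g(e_{i_t})$ being a single edge $e_{i_{t+1}}$, up to orientation). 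Then $g^p$ fixes the conjugacy class / fixes $e_{i_0}$ up to orientation and $|g^{np}(e_{i_0})| = 1$ for all $n$, contradicting that $g$ is expanding (which requires $|g^n(e_{i_0})| \to \infty$). This contradiction shows $B = \emptyset$, i.e.\ $(v_s)_i \ge 2$, hence $|g^s(e)| \ge 2$ for every $e \in E\Delta$, as claimed. I expect the only delicate point to be bookkeeping the indices to confirm that the constraints at levels $0,\dots,s-1$ (rather than $0,\dots,s$) suffice to propagate along the cycle far enough to reach a contradiction; padding the argument by noting $|B|\le s$ so a cycle appears within $s$ steps handles this cleanly.
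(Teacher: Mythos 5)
Your argument is correct and is essentially the paper's proof recast in transition-matrix language: both rest on the monotonicity of $|g^n(e)|$ under iteration and on the observation that an orbit staying a single edge for $s$ steps must revisit a topological edge, which forces $|g^n(e)|=1$ for all $n$ and contradicts expansion. The paper phrases this directly (taking the maximal $k$ with $|g^k(e)|=1$ and noting the $k+1$ single edges $e, g(e),\dots,g^k(e)$ have distinct underlying topological edges, so $k+1\le s$), and the bookkeeping point you flag is handled exactly as you suggest, by tracking the chain of indices lying in successively shorter constancy sets and applying pigeonhole among the $s$ topological edges.
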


\begin{proof}
Let $e\in E\Delta$. If $|g(e)|\ge 2$ then $|g^s(e)|\ge |g(e)|\ge 2$,
and we are done. Thus assume that $|g(e)|=1$. Since by assumption $g$
is expanding, $|g^n(e)|\to\infty$ as $n\to\infty$.
Let $k\ge 1$ be the
maximal integer such that $|g^k(e)|=1$. Thus for $i=1,\dots, k$
$e_i:=g^i(e)$ is a single edge, and $|g^{k+1}(e)|\ge 2$. Also put $e_0:=e$.
The underlying topological edges of the edges $e_0,e_1,\dots, e_k$
must be distinct since otherwise  $|g^n(e)|=1$ for all $n\ge 1$,
yielding a contradiction. Therefore $k+1\le s$. Hence $|g^s(e)|\ge
|g^{k+1}(e)|\ge 2$, as required.

\end{proof}

\begin{cor}\label{cor:INP-len}
Let $\eta$ be an INP for $f$, so that by Proposition~\ref{prop:key},
$\eta$ has the form  $\eta=\alpha\beta^{-1}$, where $\alpha,\beta$ are
nontrivial legal paths with $v=t(\alpha)=t(\beta)\in V\Gamma$ such that
the turn at $v$ between $\alpha$ and $\beta$ is illegal.

Then $|\alpha|, |\beta|\le m||f||^m+4$.

\end{cor}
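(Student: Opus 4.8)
The plan is to bound $|\alpha|$ (and symmetrically $|\beta|$) using the Bounded Cancellation Constant together with Lemma~\ref{lem:2}. Recall that $\eta = \alpha\beta^{-1}$ is an INP, so $f(\eta)$ tightens to $\eta$; equivalently, if $\alpha'$ and $\beta'$ are the tightened forms of $f(\alpha)$ and $f(\beta)$, then since $\alpha,\beta$ are legal the images $\alpha'=f(\alpha)$, $\beta'=f(\beta)$ are already tight, $t(\alpha')=t(\beta')=f(v)$, and when we tighten $\alpha'(\beta')^{-1}$ the result is again $\alpha\beta^{-1}$. In particular $\alpha$ is an initial segment of $\alpha'=f(\alpha)$, i.e. the terminal segment of $f(\alpha)$ that cancels against the terminal segment of $f(\beta)$ has length $|f(\alpha)| - |\alpha|$. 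By Proposition-Definition~\ref{prop:bc} (Bounded Cancellation), applied to the tight path $\alpha\beta^{-1}$, this cancelled terminal segment of $f(\alpha)$ has length at most $C_f = m\|f\|$. Hence
$$|f(\alpha)| - |\alpha| \le m\|f\| .$$

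Next I would iterate this. The same reasoning applies to $f^n(\eta)$ for every $n\ge 1$: since $\eta$ is an INP, $f^n(\eta)$ also tightens to $\eta$, and $\alpha$ is an initial segment of the tight path $f^n(\alpha)$; the cancelled terminal segment of $f^n(\alpha)$ when forming $f^n(\alpha)\bigl(f^n(\beta)\bigr)^{-1}$ has length at most $C_f$ by applying Bounded Cancellation once to the tight path $f^{n-1}(\alpha)f^{n-1}(\beta)^{-1}=\,$(tightened)$\,\eta$ — more precisely, write $f^n(\alpha) = f(f^{n-1}(\alpha))$ and use that $f^{n-1}(\alpha)$ already has $\alpha$ as an initial segment with a cancelled tail of length $\le C_f$; one short induction shows the cancelled tail of $f^n(\alpha)$ is also $\le C_f$ for all $n$. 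Therefore $|f^n(\alpha)| \le |\alpha| + C_f$ for all $n\ge 1$.

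Now I would turn this into an upper bound on $|\alpha|$ itself. Consider the first edge $e$ of $\alpha$. The path $\alpha$ begins with $e$, but more usefully: $\alpha$ (being legal and nontrivial) contains at least one full edge, say its initial edge $e$, and $f^n(\alpha)$ contains $f^n(e)$ as an initial subpath (up to the boundary edge being possibly partial — here is where the ``$+4$'' slack enters, accounting for the two partial edges at the two ends of $\alpha$). So $|f^n(e)| \le |f^n(\alpha)| \le |\alpha| + C_f$ for all $n$. Taking $n = m = m(\Gamma)$ and applying Lemma~\ref{lem:2} to $g=f$ (with $s=m$): it is enough to observe that expansion plus Lemma~\ref{lem:2} forces $|f^m(e)|$ to be large, and in fact a Perron–Frobenius / counting estimate gives $|\alpha| \le |f^m(e_0)| \le \|f\|^m$ where $e_0$ is the initial edge, so combining, $|\alpha| \le \|f\|^m \cdot m \cdot(\ldots)$; the clean way is: $\alpha$ is a subpath of $f^m(\text{some edge})$ — no, rather, since $\eta=\alpha\beta^{-1}$ is an INP of period dividing $m!$, one shows $\alpha$ is an initial segment of $f^{m}(\alpha_0)$ for $\alpha_0$ the first edge, hence $|\alpha|\le |f^m(\alpha_0)| \le \|f\|^m$, and then the cancellation bound is not even needed for this part. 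Then adding the two partial boundary edges and being generous gives $|\alpha|\le m\|f\|^m+4$.

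The main obstacle will be keeping track of the ``partial edge'' bookkeeping at the two endpoints of $\alpha$ and $\beta$ — the endpoints $o(\alpha),o(\beta)$ need not be vertices, so $\alpha$ is not literally an edge-path and the recursive ``$\alpha$ is an initial segment of $f(\alpha)$'' relation must be stated in terms of the simplicial support $\hat\alpha$. This is precisely the source of the additive constant $4$ in the bound (two extra edges on each of the two legs, or some similar small count). I would set up the argument carefully using $|\alpha|=|\hat\alpha|$ from Section~\ref{sec:b}, verify that tightening $f^n(\alpha)f^n(\beta)^{-1}$ indeed recovers $\eta$ because $\eta$ is an INP (here Proposition~\ref{prop:key}(3) identifying INPs with period-$1$ pINPs, together with the legality of $\alpha,\beta$ and illegality of the turn at $v$, does the work), and then the two quantitative inputs — Bounded Cancellation and Lemma~\ref{lem:2} — combine to give the stated estimate.
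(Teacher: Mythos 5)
Your overall instinct — combine the Bounded Cancellation Lemma with Lemma~\ref{lem:2} — is the right one, and it is the paper's strategy, but the quantitative core of your write-up is wrong. The claim that the cancelled tail of $f^n(\alpha)$ is at most $C_f=m||f||$ for all $n$, and hence $|f^n(\alpha)|\le |\alpha|+C_f$, is false, and the proposed induction cannot repair it: Proposition-Definition~\ref{prop:bc} applies only to a \emph{tight} concatenation, and once you have applied $f$ once, the path $f^{n-1}(\alpha)\cdot f^{n-1}(\beta)^{-1}$ is no longer tight, so you may not re-apply the lemma with the same constant $C_f$. Concretely, for an INP one has $f(\alpha)=\alpha\gamma$ and $f(\beta)=\beta\gamma$ with $\gamma$ nontrivial (by expansion), so the cancelled tail of $f^n(\alpha)$ is $\gamma\, f(\gamma)\cdots f^{n-1}(\gamma)$, whose length tends to infinity with $n$. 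The correct bound is $|f^n(\alpha)|\le|\alpha|+C_{f^n}$ with $C_{f^n}\le m||f||^n$, and this is precisely why the exponent $m$ appears in the statement. Your fallback argument is also unjustified and in fact circular: there is no reason that $\alpha$ should be an initial segment of $f^m(e_0)$ for the fixed power $m=m(\Gamma)$ — the eigenray property only says $\alpha$ is covered by $f^k(\tau)$ for $k$ sufficiently large, and how large depends on $|\alpha|$, the very quantity you are trying to bound; so the inequality $|\alpha|\le ||f||^m$ ``without the cancellation bound'' does not follow (and would be stronger than the corollary).

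What the paper actually does, and what your proposal never assembles, is a short contradiction argument at the single power $n=m$: if $|\alpha|\ge m||f||^m+5$, then $\alpha$ contains an edge-path of length $|\alpha|-2$, and applying Lemma~\ref{lem:2} edge by edge gives the lower bound $|f^m(\alpha)|\ge 2(|\alpha|-2)$; on the other hand $f^m$ is a train track map with $||f^m||\le||f||^m$, so $C_{f^m}\le m||f||^m$, and since $f^m(\alpha),f^m(\beta)$ are legal (hence tight), at most $C_{f^m}$ of $f^m(\alpha)$ cancels when tightening $f^m(\alpha)f^m(\beta)^{-1}$, leaving a surviving edge-subpath of length at least $2(|\alpha|-2)-m||f||^m\ge|\alpha|+1$. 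But because $\eta$ is a Nielsen path for $f^m$ with its unique illegal turn at $v$, the surviving portion of $f^m(\alpha)$ is exactly $\alpha$, of length $|\alpha|$ — a contradiction, giving $|\alpha|\le m||f||^m+4$ (and symmetrically for $\beta$). So the missing ingredients in your plan are: (i) the use of the iterate constant $C_{f^m}$ rather than $C_f$, and (ii) the explicit comparison of the doubling estimate from Lemma~\ref{lem:2} against the cancellation bound and the fact that the uncancelled part of $f^m(\alpha)$ is exactly $\alpha$ (which you do observe, via Proposition~\ref{prop:key}, but never exploit quantitatively).
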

\begin{proof}

Suppose that $|\alpha|>m||f||^m+4$, that is $|\alpha|\ge m||f||^m+5$. Thus $\alpha$
contains as a subpath an edge-path of combinatorial length $|\alpha|-2\ge
m||f||^m+3$. Then by Lemma~\ref{lem:2}, $|f^m(\alpha)|\ge 2(|\alpha|-2)$.

 Note that
$f^m:\Gamma\to\Gamma$ is a train-track map with $||f^m||\le
||f||^m$. Hence, by Proposition-Definition~\ref{prop:bc}, $C_{f^m}\le
m||f||^m$. The paths $f^m(\alpha)$ and $f^m(\beta)$ are already tight
and legal.
Therefore $f^m(\alpha)$ contains an edge-subpath $\tau$ that survives
after tightening $f^m(\alpha)f^m(\beta^{-1})$ and such that 
\[
|\tau|\ge 2(|\alpha|-2)-C_{f^m}\ge 2|\alpha|-4-m||f||^m\ge |\alpha|+1
\]
where the last inequality holds because we assumed that $|\alpha|\ge
m||f||^m+5$.

However, $\eta=\alpha\beta^{-1}$ is a Nielsen path for
$f^m$, with $\alpha,\beta$ legal and the turn between $\alpha$ and
$\beta$ illegal. Therefore, after tightening $f^m(\alpha)f^m(\beta^{-1})$ the
portion of $f^m(\alpha)$ that does not cancel must be exactly its initial
segment $\alpha$ which has combinatorial length  $|\alpha|$,
yielding a contradiction. Thus $|\alpha|\le m||f||^m+4$. A symmetric
argument shows that $|\beta|\le m||f||^m+4$.
\end{proof}

\begin{conv}
Note that the expanding assumption on $f$ implies that if $e\in
E\Gamma$ and $k\ge 1$ then $f^k(e)\ne e^{\pm 1}$. Let $Fix(f):=\{x\in
\Gamma| f(x)=x\}$ be the fixed set of $f$.

If $f(e)=e_1\dots e_n$ and $e_i=e^{\pm 1}$ for some $1<i<n$ or if
$f(e_i)=e^{-1}$ for $i\in \{1,n\}$ then we get a fixed point of $f$ in
the interior of the subinterval of $e$ that gets mapped to $e_i$.
Moreover, every fixed point of $f$, contained in the interior of some
edge, arises in this fashion. (Here we are using our PL assumption on
$f$). Thus for every $e\in E\Gamma$ there are at most $|f(e)|$ fixed
points of $f$ in the interior of $e$ and at most $|f(e)|+1$ fixed
points in the closure of $e$. Therefore $\#(Fix(f))\le m (||f||+1)\le 2m||f||$.

Let $\Gamma'$ be the graph obtained from $\Gamma$ by subdividing along
all elements of $Fix(f)$ which are not already vertices of
$\Gamma$. We denote by $f':\Gamma'\to\Gamma'$ the map $f$ considered
as a graph map $\Gamma'\to\Gamma'$. 

Every path in $\Gamma$ is a path in $\Gamma'$. Sometimes, in order to
avoid confusion, for a path $\alpha$ in $\Gamma$, we will denote its
combinatorial length (in the sense defined in the beginning of
Section~\ref{sec:b}) in $\Gamma$ by $|\alpha|_\Gamma$ and we  will denote its
combinatorial length in $\Gamma'$ by $|\alpha|_{\Gamma'}$.

\end{conv}

The following is a direct consequence of the definition of $f'$ and
$\Gamma'$:
\begin{lem}\label{lem:Gamma'}
The following hold:
\begin{enumerate}
\item Each edge of $\Gamma$ got subdivided into $\le |f(e)|_\Gamma$
  edges in $\Gamma'$. Hence the graph $\Gamma'$ and the map
  $f':\Gamma'\to \Gamma'$ can be computed in time $O\left(||f|| m(\log
  m+\log ||f||)\right)$.  [The $\log m+\log ||f||$-factor arises since $\Gamma'$ has $\le
  m||f||$ topological edges and, when describing $f'$, we first need
  to index these edges by binary integers.]  
\item For every path $\alpha$ in $\Gamma$ we have
$|\alpha|_\Gamma\le |\alpha|_{\Gamma'}\le ||f||\cdot
|\alpha|_\Gamma$.
\item  We have $||f'||\le ||f||^2$.
\end{enumerate}
\end{lem}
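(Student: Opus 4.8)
The three claims follow straight from the constructions of $\Gamma'$ and $f'$, so the plan is simply to track the bookkeeping. For part (1), recall that $\Gamma'$ is obtained from $\Gamma$ by adjoining as new vertices all points of $Fix(f)$ not already in $V\Gamma$. By the analysis in the preceding convention, a point of $Fix(f)$ in the interior of an edge $e$ arises from an occurrence of $e^{\pm 1}$ in the edge-path $f(e)$ (or from $f(e_i)=e^{-1}$ at an endpoint), so the number of such points interior to $e$ is at most $|f(e)|_\Gamma$; hence $e$ is cut into at most $|f(e)|_\Gamma$ subedges. Summing over $e\in E\Gamma$ shows $\Gamma'$ has at most $\sum_{e}|f(e)|_\Gamma\le m\|f\|$ topological edges. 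To actually build $\Gamma'$ and record $f'$ algorithmically we first index these $\le m\|f\|$ edges by binary strings, costing a factor $\log(m\|f\|)=O(\log m+\log\|f\|)$ per edge symbol; writing down the images $f'(e')$ — whose total combinatorial length is of the same order as that of the $f(e)$, i.e.\ $O(m\|f\|)$ edge-symbols once we re-express each $f(e)$ in the finer edge-alphabet — then takes time $O\big(m\|f\|(\log m+\log\|f\|)\big)$, which is the asserted bound.

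For part (2), the inequality $|\alpha|_\Gamma\le|\alpha|_{\Gamma'}$ is immediate since $\Gamma'$ refines $\Gamma$: every $\Gamma$-edge met by $\hat\alpha$ contributes at least one $\Gamma'$-edge, and the simplicial support only grows under refinement. For the upper bound, each edge $e$ of $\Gamma$ is subdivided into at most $|f(e)|_\Gamma\le\|f\|$ pieces in $\Gamma'$, so a $\Gamma$-edge-path of combinatorial length $|\alpha|_\Gamma$ becomes, in $\Gamma'$, an edge-path of length at most $\|f\|\cdot|\alpha|_\Gamma$; passing to the simplicial support $\hat\alpha$ (which is itself a $\Gamma$-edge-path of length $|\alpha|_\Gamma$) gives $|\alpha|_{\Gamma'}\le\|f\|\cdot|\alpha|_\Gamma$.

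For part (3), observe that $f'$ is just $f$ viewed as a self-map of the finer graph. For an edge $e'$ of $\Gamma'$, the path $f'(e')$ is a subpath of $f(e)$ for the $\Gamma$-edge $e$ containing $e'$, so $|f'(e')|_\Gamma\le|f(e)|_\Gamma\le\|f\|$; re-expressing this $\Gamma$-path in the $\Gamma'$-alphabet and invoking part (2) gives $|f'(e')|_{\Gamma'}\le\|f\|\cdot|f'(e')|_\Gamma\le\|f\|^2$. Taking the maximum over $e'\in E\Gamma'$ yields $\|f'\|\le\|f\|^2$. There is no real obstacle here; the only point requiring minor care is that in part (1) one must distinguish the purely combinatorial count of edges of $\Gamma'$ (which gives the $m\|f\|$ bound) from the cost of the symbolic description (which introduces the logarithmic factor), and in parts (2)--(3) one must remember to pass through the simplicial support rather than arguing directly with $\alpha$, since the endpoints of $\alpha$ need not be vertices.
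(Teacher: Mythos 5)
Your proposal is correct and takes exactly the route the paper intends: the paper states this lemma without proof, introducing it as ``a direct consequence of the definition of $f'$ and $\Gamma'$'', and your edge-by-edge bookkeeping from the construction of the subdivision (count of fixed points per edge, refinement estimate via the simplicial support, and $\|f'\|\le \|f\|\cdot\|f\|$ via part (2)) is precisely that verification. The one step to tighten is the inference ``at most $|f(e)|_\Gamma$ interior fixed points, hence at most $|f(e)|_\Gamma$ subedges'' --- $k$ interior cut points produce $k+1$ pieces --- so to get the stated count one should note that an occurrence of $e$ (rather than $e^{-1}$) in the first or last position of $f(e)$ fixes an endpoint, not an interior point, and that $f(e)=e^{-|f(e)|}$ is impossible for an expanding homotopy equivalence, whence there are at most $|f(e)|_\Gamma-1$ interior fixed points; in any case this off-by-one is immaterial for the $O$-estimates the lemma is used for.
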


\begin{rem}\label{rem:comp}
If the number $m$ of topological edges of $\Gamma$ is considered a constant, and $\alpha$ is an edge-path in $\Gamma$ then for $t\ge 1$ we have $|f^t(\alpha)|_\Gamma\le ||f||^t |\alpha|_\Gamma$ and the path $f^t(\alpha)$ can be computed in $O( ||f||^t |\alpha|_\Gamma)$ steps. However, it is often more efficient to use a different computational bound in this situation, since $|f^t(\alpha)|_\Gamma$ may be much smaller than $||f||^t |\alpha|_\Gamma$. Note that  $f(\alpha)$ can be computed in $O(|f(\alpha)|_\Gamma)$ steps and hence $f^t(\alpha)$ can be computed in $O(t|f^t(\alpha)|_\Gamma)$ steps (if $m$ is viewed as a constant). If $m$ is a variable parameter, then in these computations we first need to index the edges of $\Gamma$ as $e_1,\dots, e_m$, with the subscript $i$ of $e_i$ written in binary form. Therefore computing $f(\alpha)$ can be done in $O(|f(\alpha)|_\Gamma\log m)$ steps and hence $f^t(\alpha)$ can be computed in $O(t|f^t(\alpha)|_\Gamma\log m)$ steps. The graph $\Gamma'$ has $m'\le m||f||$ topological edges. Therefore, by similar reasoning, given an edge-path $\alpha$ in $\Gamma'$, we can compute $(f')^t(\alpha)$ in $O\left(t|(f')^t(\alpha)|_{\Gamma'})(\log m+\log||f||)\right)$ steps.  We will use these bounds in the computations below.
\end{rem}

\begin{defn}[Eigenrays]
Let $d$ be a fixed direction in $\Gamma$ given by a nondegenerate
segment $\tau$ in $\Gamma$ starting at a fixed point $x\in \Gamma$ and
such that $f(\tau)$ has $\tau$ as an initial segment. Since $f$
is expanding, the PL assumption on $f$ implies that $\tau$ is a proper
initial segment of $f(\tau)$ and that $|f^k(\tau)|\to\infty$ as
$k\to\infty$.  Thus we can form a reduced legal
path $\rho_d$ in $\Gamma$ such that for every $k\ge 1$
$f^k(\tau)$ is an initial segment of $\rho_d$.

This path $\rho_d$ is called the \emph{eigenray} of $f$ determined by
$d$. (It is not hard to check that this definition of $\rho_d$ does
not depend on the choice of $\tau$ as above).
\end{defn}

Note that in the above situation $f(\rho_d)=\rho_d$. Since $f=f'$ as functions, and since the graphs
$\Gamma$ and $\Gamma'$ have the same set of fixed points and the same
set of fixed directions at fixed points, in the above definition it
does not matter whether we use $f:\Gamma\to\Gamma$ or $f':\Gamma'\to\Gamma'$.  Also, the maps $f$ and $f'$ have exactly the
same collection of INPs.

Now part (2) of Proposition~\ref{prop:key} implies:

\begin{lem}
Let $\eta=\alpha\beta^{-1}$ be an INP in $\Gamma$, where
$\alpha,\beta$ are as in part (2) of Proposition~\ref{prop:key}. Let $d$ be the direction at $o(\alpha)$ given by $\alpha$ and let
  $d'$ be the direction at $o(\beta)$ given by $\beta$.

Then:
\begin{enumerate}
\item There is a legal path $\gamma$ in $\Gamma$ such that
$f(\alpha)=\alpha\gamma$ and $f(\beta)=\beta\gamma$. 
\item The directions $d$ and
  $d'$ are fixed by $f$.
\item The path $\alpha$ is an initial segment of the eigenray
  $\rho_d$, and the path  $\beta$ is an initial segment of the eigenray
  $\rho_{d'}$.
\end{enumerate}
\end{lem}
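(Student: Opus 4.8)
The plan is to read off all three statements directly from the structure theory of INPs recalled in Proposition~\ref{prop:key}(2), together with the definition of an eigenray. Since $\eta = \alpha\beta^{-1}$ is an INP for $f$, it is in particular a Nielsen path, so $f(\eta)$ is homotopic rel endpoints to $\eta$; moreover the endpoints $o(\alpha)$, $t(\alpha)=t(\beta)=v$ and $o(\beta)$ are all fixed by $f$. The key observation for part (1) is to compute the tightened form of $f(\eta) = f(\alpha)f(\beta)^{-1}$. Both $f(\alpha)$ and $f(\beta)$ are legal (since $f$ is a train track map and $\alpha,\beta$ are legal), so each is already tight. When we tighten the concatenation $f(\alpha)f(\beta)^{-1}$, some maximal common terminal segment of $f(\alpha)$ and $f(\beta)$ cancels; write $\gamma$ for the part that cancels, so that $f(\alpha) = \alpha_1\gamma$ and $f(\beta) = \beta_1\gamma$ with $\alpha_1\beta_1^{-1}$ tight. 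But the tightened form of $f(\eta)$ must equal $\eta = \alpha\beta^{-1}$ (as the unique tight representative of its homotopy class rel endpoints), so $\alpha_1 = \alpha$ and $\beta_1 = \beta$, which gives $f(\alpha) = \alpha\gamma$ and $f(\beta) = \beta\gamma$; here $\gamma$ is legal because it is a terminal segment of the legal path $f(\alpha)$. This proves (1).

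For part (2), note that $f(\alpha) = \alpha\gamma$ has $\alpha$ as an initial segment, hence the direction $d$ at $o(\alpha)$ determined by $\alpha$ is also the direction determined by $f(\alpha)$ at the (fixed) point $o(\alpha)$, i.e.\ $Df$ fixes $d$. The same argument applied to $f(\beta)=\beta\gamma$ shows $d'$ is fixed. This also shows that $\alpha$ (respectively $\beta$) plays exactly the role of the segment $\tau$ in the definition of an eigenray based at the fixed point $o(\alpha)$ (respectively $o(\beta)$) in the fixed direction $d$ (respectively $d'$).

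For part (3), I would iterate the relation from (1): since $f(\alpha)=\alpha\gamma$ and $\gamma$ is a terminal segment of $f(\alpha)$, an easy induction (using that $f$ restricted to legal paths is just concatenation of $f$ applied edgewise, with no cancellation) shows $f^k(\alpha) = \alpha\, \gamma\, f(\gamma)\cdots f^{k-1}(\gamma)$, so $\alpha$ is an initial segment of $f^k(\alpha)$ for every $k\ge 1$, and the lengths $|f^k(\alpha)|$ tend to infinity since $f$ is expanding. By the definition of the eigenray $\rho_d$ — the unique reduced legal ray such that $f^k(\tau)$ is an initial segment of $\rho_d$ for all $k$, applied with $\tau = \alpha$ — this forces $\alpha$ to be an initial segment of $\rho_d$. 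Symmetrically $\beta$ is an initial segment of $\rho_{d'}$.

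I do not expect any serious obstacle here; the statement is essentially bookkeeping on top of Proposition~\ref{prop:key}(2) and the eigenray definition. The one point requiring a little care is the uniqueness-of-tight-representative argument in part~(1): one must note that $\alpha\beta^{-1}$ and the tightened form of $f(\alpha)f(\beta)^{-1}$ are both tight paths homotopic rel endpoints, hence literally equal as PL paths (this uses that the turn between $\alpha$ and $\beta$ at $v$ is illegal, so that no further cancellation occurs beyond $\gamma$, and the comparison of initial/terminal segments is unambiguous). Everything else is a routine induction.
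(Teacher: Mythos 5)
Your argument is correct, and it is exactly the standard justification the paper has in mind: the paper states this lemma without proof as an immediate consequence of Proposition~2.2(2) (tightening $f(\alpha)f(\beta)^{-1}$, using uniqueness of tight representatives rel endpoints and the fact that the only illegal turn of $\eta$ is at $v$, then reading off fixed directions and the eigenray property). The one delicate point, locating the break point of the tightened image at $v$ via the illegal turn, is the point you flagged and handled, so there is no gap.
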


\begin{rem}\label{rem:comput-INP}
Let $f:\Gamma\to\Gamma$ be a standard expanding irreducible train
track map. We should explain now what we mean by ``computing'' an INP
or a pINP for $f$. Recall that every pINP $\eta$ for $f$ has the form $\eta=\alpha\beta^{-1}$,  where
$\alpha,\beta$ are as in part (2) of Proposition~\ref{prop:key}. Let
$k\ge 1$ be some integer such that $\eta$ is an INP for $f^k$.
In particular $\alpha$ and $\beta$ begin at $f$-periodic points fixed
by $f^k$ and meet at nondegenerate illegal turn. 

Then either $\alpha$ is an edge-path in $\Gamma$ or $\alpha$ begins at
an interior point $x$ of an edge $e$ of $\Gamma$. In the first case
by ``computing'' $\alpha$ we just mean recording the corresponding
edge-path.

Suppose now the latter happens. Then $\alpha=e'\alpha'$ where $e'$ is
a proper terminal segment of $e$ starting with $x$, and where
$\alpha'$ is a (possibly empty) edge-path in $\Gamma$. Recall that $f^k$ is not necessarily a
standard graph map. Still, $f^k(e)$ is, combinatorially, an edge-path
in $\Gamma$ of the form $e_1,e_2,\dots,e_r$. The edge $e$ is
subdivided into $r$ consecutive closed intervals $J_1,J_2,\dots J_r$,
with $f^k$ taking the interior of $J_i$ to the open edge $e_i$ for
$i=1,\dots, r$. There is a unique index $q$ such that $x$ belongs to
the interior of $J_q$; moreover in this case $e_q=e^{\pm 1}$.  Recall
that in our graph $\Gamma$ the edge $e$ is equipped  with the
characteristic map $\xi_e:e\to (0,1)$. Applying $\xi_e$ to the
subdivision $J_1,J_2,\dots J_r$ of $e$, we get a subdivision
\[
0=s_0< s_1 <\dots < s_r=1  
\]
of $[0,1]$, with $\xi_e(x)\in (s_{q-1},s_q)$.

In this case, by ``computing'' $e'$ we mean finding $k$, the path
$e_1,e_2,\dots,e_r$, the subdivision $0=s_0< s_1 <\dots < s_r=1$ and
the (rational) number $\xi_e(x)\in (s_{q-1},s_q)$. 
Then ``computing'' $\alpha$ means computing $e'$ together with
computing the edge-path $\alpha'$ in $\Gamma$.

The meaning of  ``computing'' $\beta$ is defined similarly.

Finally, by computing an INP
or a pINP $\eta$ for $f$ we mean finding the decomposition of
$\eta$ in the form $\eta=\alpha\beta^{-1}$ as above and computing each
of $\alpha$ and $\beta$ in the above sense.

Note also that for practical computations with INPs and pINPs knowing
the combinatorial structure of $\alpha$ and $\beta$ is often
sufficient. Thus, when describing $e'$ above, it is often sufficient
to specify $k$, the combinatorial path $e_1,\dots, e_k$ and the index
$q$ such that $x$ occurs in the interior of the interval $J_q$ being
mapped by $f^k$ to $e_q=e^{\pm 1}$. This point of view is taken in the
train track computational software package developed by Thierry
Coulbois~\cite{Col}.
\end{rem}

We are now ready to prove Theorem~\ref{thm:alg-inp}.

\begin{proof}[Proof of Theorem~\ref{thm:alg-inp}]

First we compute the set $Fix(f)$ (using the PL definition of $f$),
and construct the graph $\Gamma'$ and compute the map
$f':\Gamma'\to\Gamma'$.  By Lemma~\ref{lem:Gamma'}, this task can be
done in $O(m||f||)$ steps. We then find all the fixed directions at
vertices of $\Gamma'$ by checking which edges $e'\in E\Gamma'$ have the
property that $f'(e')$ starts with $e'$. Note that the number of
vertices of $\Gamma'$ and of directions at vertices of $\Gamma'$ is $O(m||f||)$, and therefore this task can also
be done is  in $O(m||f||)$ steps. Put $m'$ to be the number of
non-oriented edges in $\Gamma'$, so that $m'=O(m||f||)$.

For each edge $e'$ of $\Gamma'$ defining a fixed direction of $f'$ we
start computing the eigenray $\rho_{e'}$ by iterating $f$ on $e'$. We
iterate until we have found $k\ge 1$ such that $|f^{km'}(e')|_{\Gamma'}\ge
||f||(m||f||^m+5)$, so that $|f^{km'}(e')|_{\Gamma}\ge
(m||f||^m+5)$. By Lemma~\ref{lem:2} applied to the map $f'$, we know
that $f^{m'}$ sends every edge of $\Gamma'$ to an edge-path of length
$\ge 2$ in $\Gamma'$. Thus $|f^{tm'}(e')|_{\Gamma}|\ge 2^t$ for $t\ge
1$. Therefore we can find $k\ge 1$ such that  $|f^{km'}(e')|_{\Gamma'}\ge
||f||(m||f||^m+5)$ with $k=O(m\log||f||)$. Since $m'=O(m||f||)$, we
have $km'=O(m^2||f||\log ||f||)$. Therefore, by Remark~\ref{rem:comp}, for a given $e'$ computing
the path $\tau(e'):=f^{km'}(e')$ in $\Gamma'$ can be done in 
$O(m^2||f||\log ||f|| ||f||(m||f||^m+5)(\log m+\log||f||))=O(m^3||f||^{m+2}\log^2||f||\log m)$ steps.

As there are at most $O(m||f||)$ edges in $\Gamma'$ giving fixed
directions for $f'$, we find all such edges $e_1',\dots, e_q'$ (with
$q=O(m||f||)$), and for each of them compute the path $\tau(e_i')$ as
above. This requires at most   $O(m^4||f||^{m+3}\log^2||f||\log m)$ steps.

Now enumerate pairs $(\alpha,\beta)$ where $\alpha$ is an initial
segment of some $\tau(e_i')$ and where $\beta$ is an initial
segment of some $\tau(e_j')$.  Since $q=O(m||f||)$ and  $|\tau(e')|_{\Gamma'}=O(m||f||^{m+1})$, the number of such pairs $(\alpha,\beta)$ is at most $O(m^4||f||^{2m+4})$ and they can be enumerated in time $O(m^4||f||^{2m+4}\log m\log||f||)$.

For each such pair $(\alpha,\beta)$ we check whether $\eta=\alpha\beta^{-1}$ is a path in $\Gamma'$ and if this path is tight. If yes, we compute $f(\eta)$ and then compute the tightened form of $f(\eta)$. For a given pair $(\alpha,\beta)$, this check can be done in $O(m||f||^{m+2}\log m\log ||f||)$ steps, and doing this for all pairs $(\alpha,\beta)$ requires at most $O(m^5||f||^{3m+6}\log m\log ||f||)$ steps. If the tightened form of $f(\eta)$ is $\eta$, then by Proposition~\ref{prop:key} $\eta$ is an INP for $f$ (note that in this case the turn between $\alpha$ and $\beta$ in $\eta$ is automatically illegal since $f$ is expanding), and, also by Proposition~\ref{prop:key}, every INP for $f$ arises in this way.
Thus running this check for every pair $(\alpha,\beta)$ as above computes all the INPs for $f$.

Summing up the above computations, the total running time of this
process can be bounded above by $O(m^5||f||^{3m+6}\log m\log ||f||)$ steps. 
\end{proof}

Theorem~\ref{thm:alg-inp} can be used to establish the following more
general result:

\begin{thm}\label{thm:turner}
Let $N\ge 2$ be fixed.

The exists a deterministic algorithm $\mathfrak A'$ with the following property.
Given an expanding standard train track map $f:\Gamma\to\Gamma$ (where $\Gamma$
is a finite connected graph with all vertices of degree $\ge 3$ and
with $\pi_1(\Gamma)$ free of rank $N$), the
algorithm $\mathfrak A'$ determines whether or not $f$ has any
periodic INPs and if yes, computes all the periodic INPs for $f$. 

The algorithm then constructs the graph $S(f)$.

The algorithm terminates in polynomial time in $||f||$. 
\end{thm}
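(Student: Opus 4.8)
The plan is to reduce the problem of finding all periodic INPs for $f$ to finding all INPs for a single fixed power $f^n$, and then apply Theorem~\ref{thm:alg-inp} to that power. By Remark~\ref{rem:FH} (Feighn--Handel, \cite[Corollary 3.14]{FH}), for $r=b_1(\Gamma)=N$ there is an explicitly computable integer $n=n(N)=3^{N^2-1}(g(15N-15))!$ such that a tight path $\eta$ in $\Gamma$ is a periodic INP for $f$ if and only if $\eta$ is an INP for $f^n$. Since $N$ is fixed, $n=n(N)$ is a fixed constant (not a variable parameter), so this reduction costs only a constant factor.

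The first step is to replace $f$ by a \emph{standard} representative of $f^n$. The map $f^n:\Gamma\to\Gamma$ is in general not standard, so we apply the standardization procedure (cf.\ the Convention in Section~\ref{sec:b}) to obtain a standard graph map $g:\Gamma\to\Gamma$ isotopic to $f^n$ relative the vertices of $\Gamma$. Here $\|f^n\|\le \|f\|^n$, and since $n$ is a fixed constant this is polynomial in $\|f\|$; likewise $\|g\|$ is polynomial in $\|f\|$. One must note that $g$ is again an expanding irreducible train track map with the same collection of INPs as $f^n$ (standardization is an isotopy rel vertices, hence does not change Nielsen paths up to reparametrization). We then run the algorithm $\mathfrak A$ of Theorem~\ref{thm:alg-inp} on input $g$. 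Since $m=m(\Gamma)=O(N)$ is fixed, $\mathfrak A$ terminates in time $O(m^5\|g\|^{3m+6}\log m\log\|g\|)$, which, substituting $\|g\|\le \|f\|^{O(n)}$ and absorbing all $N$-dependence into the implied constant, is polynomial in $\|f\|$. The output is the complete list of INPs for $g$, i.e.\ exactly the list of periodic INPs $\eta_1,\dots,\eta_k$ for $f$ (up to inversion), each computed in the sense of Remark~\ref{rem:comput-INP}.

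Finally, to construct $S(f)$: if the list is empty, we output the empty graph. Otherwise, from the computed combinatorial data for each $\eta_i=\alpha_i\beta_i^{-1}$ we read off the two endpoints $o(\eta_i),t(\eta_i)\in\Gamma$ (recorded as either vertices of $\Gamma$ or as interior points of edges, encoded as in Remark~\ref{rem:comput-INP}); two such endpoints of possibly different $\eta_i$'s are declared equal if and only if their encodings coincide after clearing denominators, a comparison doable in polynomial time. Taking $Y_f$ to be the set of these distinct endpoints gives $V(S(f))$, and for each $i$ we add an edge from $o(\eta_i)$ to $t(\eta_i)$ labelled $\eta_i$. Since $k=O(\#Fix(f^n))$ is polynomial in $\|f\|$, assembling $S(f)$ together with its labelling map $\mu$ takes polynomial time. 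The main obstacle is bookkeeping: carefully tracking what it means to ``compute'' a periodic INP through the passage to a standard representative of $f^n$ (so that endpoints lying in edge interiors are recorded with consistent rational coordinates and can be tested for equality), and verifying that the blow-up $\|f\|\mapsto\|f^n\|\le\|f\|^n$ genuinely stays polynomial because $N$, hence $n(N)$, is fixed.
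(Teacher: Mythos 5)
Your overall route is exactly the paper's: invoke the Feighn--Handel bound (Remark~\ref{rem:FH}) to get a fixed power $n=n(N)$ with $\{\text{pINPs of } f\}=\{\text{INPs of } f^n\}$, note $\|f^n\|\le\|f\|^n$ is polynomial in $\|f\|$ since $N$ is fixed, pass to a standard representative, and run the algorithm of Theorem~\ref{thm:alg-inp}. The one place where your write-up is not quite right is the assertion that the standard map $g$ isotopic to $f^n$ rel $V\Gamma$ has ``the same collection of INPs as $f^n$,'' so that the output of $\mathfrak A$ on $g$ is ``exactly'' the list of pINPs for $f$. That is false as stated: the isotopy moves points in the interiors of edges, so the periodic points of $g$, and hence the endpoints and precise parametrization of its INPs, generally sit at different coordinates than those of $f^n$, even though the combinatorial (edge-path) data agrees. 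Since the theorem asks you to \emph{compute} the pINPs of $f$ in the sense of Remark~\ref{rem:comput-INP} (which includes the rational coordinates of endpoints lying in edge interiors), returning the INPs of $g$ unmodified does not literally fulfill the statement. The paper closes this gap by producing a polynomial-time computable reparameterization homeomorphism $h:\Gamma\to\Gamma$, identity on $V\Gamma$ and mapping each edge to itself by an orientation-preserving PL homeomorphism, with $f^n=h\circ g\circ h^{-1}$; then $\eta'\mapsto h\eta'$ is a bijection from the INPs of $g$ to the INPs of $f^n$, i.e.\ to the pINPs of $f$, and one applies $h$ to the output of $\mathfrak A$. You flag this as a ``bookkeeping obstacle'' but your stated resolution (identifying the two INP collections outright) is the wrong one; the conjugating map $h$ is the missing ingredient.

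That said, the defect does not propagate to your construction of $S(f)$: since $h$ maps each edge to itself preserving orientation, it preserves the combinatorial labels $\mu(\eta_i)$ and the incidence pattern of endpoints, so the labelled graph you assemble from the $g$-data is the correct $S(f)$. So the fix needed is local: either insert the computation of $h$ and push the INPs of $g$ forward by $h$ before declaring them the pINPs of $f$, or explicitly weaken what you output to the combinatorial data of the pINPs (which, as Remark~\ref{rem:comput-INP} notes, suffices for the later applications) --- but as written the claim of equality of INP collections is a genuine misstep.
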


\begin{proof}
As noted in Remark~\ref{rem:FH},  Feighn and Handel showed in
\cite[Corollary 3.14]{FH} that there exists a computable power
$t=t(N)$ such that for $f^t:\Gamma\to\Gamma$ a path $\eta$ is a periodic indivisible
Nielsen path for $f$ if and only if $\eta$ is an INP for $f^t$. Note
that $||f^t||\le ||f||^t$. 

Thus we want to first replace $f$ by $f^t$ and then apply
Theorem~\ref{thm:alg-inp} to $f^t$. However, a technical complication
arises in that $f^t:\Gamma\to\Gamma$ is not necessarily a standard
graph map and thus Theorem~\ref{thm:alg-inp} technically does not
directly apply to $f^t$. 

However, there is a unique standard graph map $g:\Gamma\to \Gamma$ which is
isotopic to $f^t$ relative $VG$.

There is a also polynomial time computable ``reparameterization''
map $h:G\to G$, such that
$h|_{VG}=ID_{VG}$, such that the restriction of $h$ to each edge $e$
of $G$ is a piecewise-linear orientation-preserving homeomorphism from
$e$ to $e$, and such that  $h$ conjugates $f^t$ to $g$. More
precisely, we have $f^t=h\circ g\circ h^{-1}$. Then whenever $\eta'$
is an INP for $g$ then $h\eta'$ is an INP for $f^t$, and, moreover,
all INPs for $f^t$ arise in this way.

Thus to find pINPs for $f$, we use  Theorem~\ref{thm:alg-inp} to
compute all INPs (if any) $\eta_1',\dots,  \eta_k'$ for $g$. Then
$h\eta_1',\dots,  h\eta_k'$ are all the pINPs for $f$. 
\end{proof}

\begin{rem}\label{rem:vol}
Let $N\ge 3$ and $f:\Gamma\to\Gamma$ be as in
Theorem~\ref{thm:turner}. Then $\Gamma$ has $\le 3N$ topological
edges, $\le 6N$ oriented edges and $\le 36N^2$ illegal turns at
vertices. Therefore there are at most $36N^2$ pINPs distinct in
$\Gamma$ (up to inversion). Let $\eta_1,\eta_2,\dots, \eta_s$ be these
pINPs, so that $0\le s\le 36N^2$.

Recall that, as noted in Remark~\ref{rem:FH}, there exists a uniform
power $b=b(N)$ such that every pINP of $f$ is an INP of $f^b$. 
Corollary~\ref{cor:INP-len} implies that if $\eta$ is an INP for $f^b$
then $|\eta|\le 6N||f^b||^{3N}+8$. Thus for
each $0\le i\le s$ we have $|\eta_i|\le 6N||f||^{3bN}+8$. 
\end{rem}

\begin{prop}\label{prop:pa}
Let an integer $N\ge 2$ be fixed.
There exists an algorithm that, given an expanding irreducible
standard train track map $f:\Gamma\to\Gamma$ with $\pi_1(\Gamma)\cong F_N$, decides whether or not $\phi=f_\#$ is primitively atoroidal. This algorithm terminates in at most polynomial time in $||f||$. 
\end{prop}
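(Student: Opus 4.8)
The plan is to reduce the question of primitive atoroidality of $\phi = f_\#$ to a finite, polynomial-time check on the graph $S(f)$ of periodic Nielsen paths, which we can construct in polynomial time by Theorem~\ref{thm:turner}. By definition, $\phi$ fails to be primitively atoroidal precisely when there exist $n \ne 0$ and a primitive $g \in F_N$ with $\phi^n[g] = [g]$. By Proposition~\ref{prop:mu}, a conjugacy class $[g]$ is $f_\#$-periodic if and only if $[g]$ is represented by an element of $\mu_{*,x}(\pi_1(S(f),x))$ for some vertex $x \in Y_f$. So first I would compute $S(f)$ using Theorem~\ref{thm:turner}, and then examine each connected component $Q$ of $S(f)$ and the subgroup $U_Q \le F_N$ that is the image of $\pi_1(Q,x)$ under $\mu_{*,x}$ (for a chosen base vertex $x$ of $Q$).

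The key observation is that by Proposition~\ref{prop:comp}, for each component $Q$ there is $n \ge 1$ with $\phi^n[U_Q] = [U_Q]$, and in fact a representative of $\phi^n$ in $\Aut(F_N)$ restricting to the identity on $U_Q$. Hence \emph{every} element of $U_Q$ (and of any conjugate of $U_Q$) represents an $f_\#$-periodic conjugacy class. Conversely, by Proposition~\ref{prop:mu}, every $f_\#$-periodic conjugacy class arises inside one of these subgroups $U_Q$. Therefore $\phi$ is not primitively atoroidal if and only if some $U_Q$ contains a nontrivial primitive element of $F_N$ (equivalently, $U_Q$ contains a nontrivial element of $F_N$ lying in some proper free factor of rank one, i.e.\ $U_Q$ itself has an element that is primitive in $F_N$). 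Since a subgroup $U \le F_N$ contains a primitive element of $F_N$ iff $U$ is contained in no conjugate of any proper free factor \emph{or} — more usefully here — iff the Stallings core graph of $U$ admits a folded surjection detecting a primitive generator; in any case, the cleanest route is: $U_Q$ contains a primitive element of $F_N$ if and only if either $U_Q$ has rank $\le 1$ and is generated by a primitive element, or $U_Q$ is a free factor of $F_N$ of rank $< N$ that in turn contains a primitive of $F_N$. Rather than route through free-factor detection, I would instead directly test, for each $Q$: compute a free basis of $U_Q$ from the Stallings core of $\mu_{*,x}(\pi_1(Q,x))$ (this is polynomial-time since $S(f)$ and the pINP labels have polynomially bounded size by Remark~\ref{rem:vol} and Corollary~\ref{cor:INP-len}), and then apply Whitehead's algorithm / the primitivity-detection algorithm to decide whether $U_Q$ contains a primitive element of $F_N$. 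For fixed $N$, Whitehead-type algorithms run in time polynomial in the word-lengths of the input, which are polynomial in $||f||$ here.

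The execution order is then: (i) run the algorithm of Theorem~\ref{thm:turner} to produce $S(f)$ together with the labelling map $\mu$, recording for each component $Q$ the loop labels, whose total length is polynomial in $||f||$ by Remark~\ref{rem:vol}; (ii) for each component $Q$, fold the loop labels to obtain the Stallings core graph of $U_Q \le F_N$ and read off a free basis of $U_Q$ as words over a basis of $F_N$, of polynomially bounded length; (iii) for each $Q$, decide whether $U_Q$ contains a nontrivial primitive element of $F_N$ using the (fixed-$N$ polynomial-time) primitivity-detection algorithm; (iv) output ``not primitively atoroidal'' if some $Q$ passes this test, and ``primitively atoroidal'' otherwise, including the degenerate case $S(f) = \emptyset$ (where $\phi$ is atoroidal, hence primitively atoroidal). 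Correctness follows from Propositions~\ref{prop:mu} and \ref{prop:comp}: $\phi$ is non-primitively-atoroidal iff some $f_\#$-periodic conjugacy class is primitive, iff some $U_Q$ meets the set of primitives of $F_N$.

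The main obstacle I anticipate is step (iii): making precise and efficient the detection of whether a finitely generated subgroup $U_Q$ of $F_N$ contains a primitive element, with a clean polynomial-time bound for fixed $N$. One has to be careful that ``$U_Q$ contains a primitive element of $F_N$'' is the correct translation of ``some primitive $g$ has $\phi^n[g]=[g]$'' — this is where Proposition~\ref{prop:comp}'s strong conclusion (a representative of $\phi^n$ fixing $U_Q$ pointwise) is essential, since it guarantees \emph{every} element of $U_Q$ is $f_\#$-periodic, not merely that $[U_Q]$ is $\phi$-periodic. Given fixed $N$, Whitehead's algorithm (and its refinements for primitivity in subgroups) runs in time polynomial in the input size, so the overall bound is polynomial in $||f||$; the bookkeeping lies in tracking that each quantity fed into it — the pINP lengths, the number of components, the loop labels — is polynomially bounded, which is exactly what Remark~\ref{rem:vol}, Corollary~\ref{cor:INP-len} and Theorem~\ref{thm:turner} provide.
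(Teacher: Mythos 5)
Your reduction is correct: by Proposition~\ref{prop:mu} every $f_\#$-periodic conjugacy class is carried by some $U_Q=\mu_{*,x}(\pi_1(Q,x))$, and by Proposition~\ref{prop:comp} every element of every $U_Q$ is $f_\#$-periodic, so $\phi$ fails to be primitively atoroidal exactly when some $U_Q$ contains a nontrivial primitive element of $F_N$. Steps (i) and (ii) — building $S(f)$ via Theorem~\ref{thm:turner} and folding the loop labels to get generators of the $U_Q$ of polynomially bounded total length — also match the paper. In fact this is precisely the author's original argument, which (as the remark following the proposition explains) routed the final step through the Clifford--Goldstein or Dicks algorithms for detecting a primitive element in a finitely generated subgroup of $F_N$.

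The genuine gap is your step (iii): you assert that, for fixed $N$, deciding whether a finitely generated subgroup $U\le F_N$ (given by generators as words) contains a primitive element of $F_N$ can be done in time polynomial in the input length, citing ``Whitehead-type algorithms.'' No such polynomial bound is established in the paper, in its references, or (to my knowledge) in the literature: the polynomial-time results of Roig--Ventura--Weil concern Whitehead minimization of single elements, not the subgroup problem, and the Clifford--Goldstein/Dicks algorithms come with no complexity estimate — this is exactly why the author's original proof lacked a complexity bound. The published proof sidesteps primitivity detection entirely: by Corollary~13.3 of \cite{FH}, either every element of every $U_i$ maps to $0$ in $H_1(F_N,\mathbb Z/2\mathbb Z)$, in which case $\phi$ is primitively atoroidal, or $\phi$ is not primitively atoroidal, and these alternatives are exclusive. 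Thus one only needs to compute the mod-$2$ homology images $\overline{U_i}$ of the generators and check that they vanish, which is transparently polynomial in $||f||$. To repair your argument you would either need to invoke (and justify) a polynomial bound for primitive-element detection in subgroups, or replace step (iii) by this homological criterion.
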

\begin{proof}
We first algorithmically construct the graph $S(f)$ using Theorem~\ref{thm:turner}; this can be done in polynomial time in $||f||$. If $b_1(S(f)) =0$, then $\phi$ is atoroidal and, in particular, primitively atoroidal. Thus assume that $b_1(S(f)) \ge 1$.

We list all the non-contractible connected components $Q_1,\dots, Q_k$ of $S(f)$.  In view of Remark~\ref{rem:vol}, the
number $k$ is bounded above by  $36N^2$, and the total sum of the
lengths of the $\mu$-labels of edges of $S(f)$ is bounded above by a
polynomial function in terms of $||f||$. Then, using the Stallings
folding algorithm, for each $Q_i$ we find a finite generating set of
the subgroup $U_i=\mu(\pi_1 Q_i)\le \pi_1(\Gamma)$.  Note that the sum of the lengths of the generators of $U_1,\dots, U_k$, expressed as loops in $\Gamma$, is bounded by a polynomial in $||f||$. 
Again, this step requires at most polynomial time in $||f||$.

Proposition~\ref{prop:mu} implies that a nontrivial conjugacy class $[g]$ in $\pi_1(\Gamma)$ is $\phi$-periodic if and only if $g$ is conjugate in $\pi_1(\Gamma)$ to an element of $U_i$ for some $1\le i\le k$.

Now Corollary~13.3 in \cite{FH} implies that either for $i=1,\dots, k$ every element of $U_i$ is trivial in $H_1(F_N,\mathbb Z/2\mathbb Z)$, or $\phi=f_\#$ is not primitively atoroidal (and these two cases are mutually exclusive).  We then compute the images $\overline{U_1}, \dots \overline{U_k}$ of $U_1,\dots, U_k$ in $H_1(F_N,\mathbb Z/2\mathbb Z)$ and check if we have $\overline{U_i}=\{0\}$ for $i=1,\dots, k$. [It is clear that this check can also be done in polynomial time.] If yes, then $\phi=f_\#$ is primitively atoroidal, and if not then $\phi=f_\#$ is not primitively atoroidal.
\end{proof}

\begin{rem}
 Our original proof of Proposition~\ref{prop:pa} was different and used the result of Clifford-Goldstein~\cite{CG}, or alternatively, of Dicks~\cite{D}, to decide whether any of the subgroups $U_1,\dots, U_k$ of $F_N$ contains a primitive element of $F_N$. The referee then pointed to us a simpler argument, presented above, using Corollary~13.3 of \cite{FH}.

Feighn and Handel~\cite[Corollary 3.14]{FH} obtain a general algorithm for deciding if an element $\phi\in \Out(F_N)$ is primitively
atoroidal. However, their approach relies on heavy duty machinery of CT train tracks and needs, as an essential ingredient, the main result of \cite{FH} about algorithmically constructing CTs.
That result does not come with any complexity estimate and it remains to be seen if reasonable complexity estimates can be obtained there. Therefore the general algorithm given in \cite[Corollary 3.14]{FH} also does not, for the time being, have a complexity estimate.

\end{rem}

\section{Refined full irreducibility testing}

In this section we provide a refinement of the algorithm from \cite{Ka14} for
detecting full irreducibility. 

\begin{prop}\label{prop-wh}
Let $N\ge 2$ be fixed.

There exists an algorithm that, given a finite connected
graph where every vertex has degree $\ge 3$ and $\pi_1(\Gamma)\cong
F_N$, and given a standard expanding
irreducible train track map $f:\Gamma\to\Gamma$, computes, in
polynomial time, in terms of $||f||$, the Whitehead graph $Wh_\Gamma(v,f)$
for every vertex $v\in V\Gamma$. 
\end{prop}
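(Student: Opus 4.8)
The plan is to compute each Whitehead graph $Wh_\Gamma(v,f)$ directly from its definition, exploiting the fact that two directions $e,e'$ at $v$ are adjacent in $Wh_\Gamma(v,f)$ precisely when the turn $\{e,e'\}$ (equivalently, the two-edge path $e^{-1}e'$) occurs inside $f^n(e'')$ for some edge $e''$ and some $n\ge 1$. The key observation is that it suffices to test a single, explicitly bounded power $n$: since a turn $\{e,e'\}$ is ``taken'' by $f$ in the standard sense (i.e.\ appears in the iterated images of edges) if and only if it appears already in $f^{n_0}(e'')$ for $n_0$ not exceeding the number of turns of $\Gamma$, which is $O(m^2)$ and hence $O(1)$ for fixed $N$. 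Concretely, $f$ induces a map $Df$ on the finite set of turns of $\Gamma$ (sending $\{e_1,e_2\}$ to the turn determined by the last edge of $f(e_1)$ at $v$ relative to the first edge of $f(e_2)$, after inserting the turns internal to each $f(e_i)$), and the ``taken'' turns are exactly those in the eventual image of the turns appearing in $\{f(e''): e''\in E\Gamma\}$ under forward iteration of $Df$; stability is reached after at most $\#(\text{turns})=O(m^2)$ steps.

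First I would compute, for each edge $e''\in E\Gamma$, the edge-path $f(e'')$ (which has length $\le \|f\|$), and read off all turns occurring inside it, i.e.\ all consecutive pairs $e_i^{-1}e_{i+1}$; this is the ``level-$1$'' set of taken turns at each vertex, computable in time $O(m\|f\|\log m)$. Next I would iterate: having computed $f^{n}(e'')$ for all $e''$, I read off all turns occurring in these paths. By Remark~\ref{rem:comp}, the path $f^{n}(e'')$ can be computed in $O(n\,|f^n(e'')|_\Gamma\log m)$ steps, and $|f^n(e'')|_\Gamma\le \|f\|^n$; since $N$ (hence $m$) is fixed and $n=O(1)$, every such computation runs in time polynomial in $\|f\|$. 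Because the set of all taken turns at a vertex stabilizes after at most $O(m^2)=O(1)$ iterations, I only need to carry this out for $n=1,2,\dots,n_0$ with $n_0=O(1)$, and the union of turns seen over all these levels gives exactly the edge set of $Wh_\Gamma(v,f)$ for every $v$.

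Finally I would assemble, for each vertex $v$, the simple graph whose vertex set is $\{e\in E\Gamma: o(e)=v\}$ and whose edges are the taken turns at $v$ found above; recording these graphs for all $m$ vertices takes time $O(m\cdot m^2\log m)=O(1)$ beyond the path computations. The dominant cost is computing the bounded-power iterates $f^{n}(e'')$ for $n\le n_0$ and all $e''$, which is $O\!\left(n_0\,m\,\|f\|^{n_0}\log m\right)$ — polynomial in $\|f\|$ for fixed $N$ — giving the claimed bound.

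The main obstacle, and the only non-routine point, is justifying that a uniformly bounded power $n_0$ of $f$ already witnesses every edge of every Whitehead graph, i.e.\ that iterating the induced turn map $Df$ stabilizes the set of taken turns in $O(m^2)$ steps rather than requiring arbitrarily large $n$. This follows from a standard eventual-image argument for a self-map of a finite set: the forward orbit of the level-$1$ turn set under $Df$ is a decreasing-then-periodic sequence of subsets of the turn set, so the union of all images equals the union of the first $\#(\text{turns})+1$ images. Care is needed only in bookkeeping the internal turns of each $f(e_i)$ when describing $Df$ on compositions, but this is exactly the content of the definition of ``turns taken by $f$'' and introduces no new difficulty.
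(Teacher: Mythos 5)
Your proposal is correct and rests on the same key observation as the paper: the set of turns ``taken'' by $f$ is the closure of the set of turns occurring in the paths $f(e)$, $e\in E\Gamma$, under the induced map on the finite set of turns of $\Gamma$, and since $\Gamma$ has at most $6N$ oriented edges, hence at most $36N^2$ turns, this closure is reached after a number of steps bounded in terms of $N$ alone. Where you differ is in the implementation. The paper never iterates $f$ itself: it reads off the turns contained in the paths $f(e)$ once (cost $O(||f||)$) and then iterates the derivative map $Df$ directly on subsets of the turn set, so everything after the first pass costs a bounded amount for fixed $N$, giving an essentially linear bound. You instead compute the actual edge-paths $f^n(e'')$ for all $n\le n_0=O(N^2)$ and read off their turns, which by Remark~\ref{rem:comp} costs on the order of $||f||^{n_0}$; this is still polynomial in $||f||$ for fixed $N$, so it does prove the proposition, but with an exponent of order $N^2$ rather than the paper's much smaller one. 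Two small points to tighten: (i) your description of $Df$ on a turn $\{e_1,e_2\}$ should use the \emph{initial} directions of $f(e_1)$ and $f(e_2)$, not the last edge of $f(e_1)$ --- viewed as directions at the common vertex, the junction turn of the tight path $f(e_1)^{-1}f(e_2)$ is $\{Df(e_1),Df(e_2)\}$; and (ii) your claim that the single power $f^{n_0}$ already witnesses every taken turn uses that every edge occurs in $f(e'')$ for some $e''$ (true here because $M(f)$ is irreducible); without that remark one should argue, as your algorithm in effect does, with the union of the turns over all levels $n\le n_0$. For the bound $n_0\le 36N^2+1$ the clean justification is not the ``decreasing-then-periodic'' orbit phrasing but the one the paper uses: the cumulative set of turns found so far is a nondecreasing sequence of subsets of a set of size $\le 36N^2$, and once it is constant for a single step it is constant forever, since applying $Df$ to the stabilized set adds nothing new.
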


\begin{proof}

Under the assumptions made on $\Gamma$ there are at most $3N$
topological edges and at most $6N$ oriented edges. Thus there are at
most $36N^2$ turns in $\Gamma$. 

Let us enumerate all the edges $e_1,\dots, e_k$ of $\Gamma$ (so that
$k\le 6N$).

We put $\mathcal T_0=\varnothing$, and $\mathcal T_1$ to be the set of
turns contained in $f(e_1),\dots, f(e_k)$. Computing  $\mathcal T_1$  requires at most
$O(||f||)$ time. 

Then, inductively, given a set of turns $\mathcal T_i$, we define
$\mathcal T_{i+1}:=\mathcal T_i\cup Df(\mathcal T_i)$ where $Df$ is the
derivative map of $f$. Thus 
\[
\varnothing=\mathcal T_0\subseteq \mathcal T_1 \subseteq \mathcal T_2
\subseteq \dots \subseteq \mathcal T
\]
where $\mathcal T$ is the set of all turns in $\Gamma$.

Note that, by construction, once there is some $i$ such that $\mathcal
T_i=\mathcal T_{i+1}$ then $\mathcal T_j=\mathcal T_i$ for all $j\ge i$.

Since $\#\mathcal T\le 36N^2$, this chain terminates in at most
$36N^2$ steps with some $q\le 36N^2$ such that $\mathcal
T_q=\mathcal T_{q+1}$. Summing up the times, we see that the set $T_q$ can be
computed in polynomial time in $||f||$. The set $T_q$ is the set of
all turns ``taken'' by the  train track map $f$. Using this set we can
now construct all the Whitehead graphs $Wh_\Gamma(v,f)$ of the
vertices of $\Gamma$, using Definition~\ref{defn:wh-lam}. Note that the number of vertices of $\Gamma$ is
$\le 6N$, and that for every vertex of $\Gamma$ its degree is $\le
2N$. This gives us constant (in terms of $||f||)$ bounds on the number
of Whitehead graphs we need to construct, and on the sizes of each of
these graphs. Therefore the total computational bound remains
polynomial in terms of $||f||$.

\end{proof}

\begin{thm}\label{thm:alg}
Let $N\ge 2$ be fixed.

\begin{enumerate}

\item There exists an algorithm that, given a standard topological
  representative $f':\Gamma'\to \Gamma'$ of some $\phi\in \Out(F_N)$,
  such that every vertex in $\Gamma$ has degree $\ge 3$, decides
  whether or not $\phi$ is fully irreducible.  The algorithm terminates in polynomial time in terms of $||f'||$.

\item Let $A=\{a_1,\dots,a_N\}$ be a fixed free basis of $F_N$. There exists an algorithm, that given $\Phi\in \Aut(F_N)$ (where $\Phi$ is given as an $N$-tuple of freely reduced words over $A^{\pm 1}$, $(\Phi(a_1),\dots, \Phi(a_N))$), decides, in polynomial time in $|\Phi|_A$, whether or not $\Phi$ is fully irreducible.

\item Let $S$ be a finite generating set for $\Out(F_N)$. There exists
  an algorithm that, given a word $w$ of length
  $n$ over $S^{\pm 1}$, decides whether or not the element $\phi$ of
  $\Out(F_N)$ represented by $w$ is fully irreducible. This algorithm terminates in at most exponential time in terms of the length
of the word $w$.

\end{enumerate}

\end{thm}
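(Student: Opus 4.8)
The plan is to reduce each part of Theorem~\ref{thm:alg} to the train track machinery assembled in the earlier sections, the characterization in Theorem~\ref{thm:clean}, and the complexity estimate for the Bestvina-Handel algorithm (Theorem~\ref{thm:BH92}).

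First I would handle part (1), which is the core. Given a standard topological representative $f':\Gamma'\to\Gamma'$ of $\phi\in\Out(F_N)$, run the Bestvina-Handel algorithm. By the appendix (Theorem~\ref{thm:BH92}), this terminates in polynomial time in $||f'||$ and either outputs an expanding irreducible train track representative $f:\Gamma\to\Gamma$ of $\phi$, or exhibits a $\phi$-periodic proper free factor (a topological reduction), in which case $\phi$ is not fully irreducible and we stop. One subtlety: Bestvina-Handel may also detect that $\phi$ has finite order or is not expanding (the Perron-Frobenius eigenvalue equals $1$); such $\phi$ fix a conjugacy class of a proper free factor and are not fully irreducible for $N\ge 3$, so we stop. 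Also we should first replace $\Gamma$ by a subdivision if needed so that $f$ is standard (isotope rel vertices), which only polynomially affects the size. Assuming we now have an expanding irreducible standard train track map $f:\Gamma\to\Gamma$ with $\pi_1(\Gamma)\cong F_N$: use Proposition~\ref{prop:pa} to decide in polynomial time in $||f||$ whether $\phi$ is primitively atoroidal. If not, $\phi$ is not fully irreducible (Remark~\ref{rem:BH92}) and we stop. If yes, then by Theorem~\ref{thm:clean} the automorphism $\phi$ is fully irreducible if and only if $f$ is weakly clean, i.e.\ $f$ is expanding irreducible (already known) and every Whitehead graph $Wh_\Gamma(v,f)$ is connected. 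By Proposition~\ref{prop-wh} we compute all these Whitehead graphs in polynomial time in $||f||$, and check connectivity of each (a constant-size check since the graphs have bounded size). Output ``fully irreducible'' iff all are connected. The main obstacle here is controlling how the size $||f||$ of the train track output of Bestvina-Handel compares to $||f'||$; this is exactly what Theorem~\ref{thm:BH92} is there to supply, so the argument is really just a careful bookkeeping of polynomial bounds through each stage.

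For part (2), given $\Phi\in\Aut(F_N)$ as an $N$-tuple of freely reduced words over $A^{\pm1}$, build the standard topological representative on the rose $R_N$ with edges $a_1,\dots,a_N$ sending $a_i\mapsto\Phi(a_i)$; this has $||f'||=|\Phi|_A$, all vertices of degree $2N\ge 3$ (or we can start from the rose directly, whose single vertex has degree $2N\ge 6$), and then invoke part (1). So part (2) is immediate modulo the observation that the rose's vertex has degree $\ge 3$ when $N\ge 2$, i.e.\ degree $2N\ge 4$, which is fine.

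For part (3), given a word $w$ of length $n$ over $S^{\pm1}$, multiply the corresponding generators in $\Aut(F_N)$ to get $\Phi$ with $|\Phi|_A\le C^n$ for a constant $C=C(S)$ (each generator multiplies word lengths by at most a fixed constant, so composing $n$ of them yields an at most exponential bound), then apply part (2). Since part (2) runs in polynomial time in $|\Phi|_A$, the overall running time is at most exponential in $n$. The only thing to note is that $S$ generates $\Out(F_N)$, so we may lift each $s\in S^{\pm1}$ to a fixed representative in $\Aut(F_N)$ and $w$ to the corresponding product; two different lifts give the same outer class, which is all we need. I expect no real obstacle in parts (2) and (3); the entire difficulty is concentrated in part (1), and within it in the interplay between the Bestvina-Handel complexity bound and the subsequent polynomial-time subroutines.
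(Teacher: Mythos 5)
Your proposal is correct and follows essentially the same route as the paper: Bestvina--Handel with the complexity bound of Theorem~\ref{thm:BH92} (plus Remark~\ref{rem:poly}) to get an expanding irreducible standard train track map or a reduction, a finite-order/non-expanding check, Proposition~\ref{prop:pa} for primitive atoroidality, and then Theorem~\ref{thm:clean} via the Whitehead graphs of Proposition~\ref{prop-wh}, with parts (2) and (3) reduced to part (1) through the rose exactly as in the paper. One small remark: your hedge ``for $N\ge 3$'' in the finite-order case is unnecessary, since a finite-order $\phi$ has a power equal to the identity, which preserves the conjugacy class of every proper free factor, so such $\phi$ is not fully irreducible for every $N\ge 2$ (the paper handles this by testing whether $M(f)$ is a permutation matrix).
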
 
\begin{proof}

(1) We first run the algorithm of Bestvina-Handel for trying to find an
irreducible standard train track representative of  $\phi$. By
Theorem~\ref{thm:BH92}, this
algorithm always terminates in polynomial time in $||f'||$, and either finds a reducible topological
representative of $\phi$ (in which case we conclude that $\phi$ is not
fully irreducible), or it finds an irreducible standard train track representative $f:\Gamma\to\Gamma$ of  $\phi$. 
Assume that the latter has occurred. In this case, by Remark~\ref{rem:poly}, we also have that
$||f||$ is bounded by a polynomial function in terms of $||f'||$. We then check whether $M(f)$ is a permutation matrix. If yes, then $\phi$ has finite order in $\Out(F_N)$ and thus is not fully irreducible. If not, then $f$ is an expanding irreducible train track representative of $\phi$.

We then use the algorithm obtained in Proposition~\ref{prop:pa} to decide, in polynomial time in $||f'||$, whether or not $\phi$ is primitively atoroidal.  If $\phi$ is not primitively atoroidal, then some positive power of $\phi$ preserves the conjugacy class of a rank-1 free factor of $F_N$, and thus $\phi$ is not fully irreducible.

Suppose now that $\phi$ is primitively atoroidal. We then construct the Whitehead graphs $Wh_\Gamma(v,f)$
for every vertex $v\in V\Gamma$.  Note that by
Proposition~\ref{prop-wh} this step can be done in polynomial time in
terms of $||f||$. Moreover, since there are at most $6N$ vertices in
$\Gamma$, and for every $v\in V\Gamma$ the Whitehead graphs
$Wh_\Gamma(v,f)$ has $\le 2N$ vertices, we can also verify in
polynomial time in $||f||$ whether or not the Whitehead graphs
$Wh_\Gamma(v,f)$ are connected for all $v\in V\Gamma$

Theorem~\ref{thm:clean} now implies
that $\phi$ is fully irreducible if and only if all these Whitehead
graphs are connected.

Thus we have produced an algorithm which decides whether or not $\phi$
is fully irreducible.  Summing up the total time expended,  we see that the algorithm does terminate in polynomial time in $||f'||$.

(2) The statement of part (2) follows from part (1).
We take $\Gamma'$ to be an $N$-rose with petals marked by $a_1,\dots a_N$. Take $u_i=\Phi(a_i)$ for $i=1,\dots, N$.
Now construct a standard graph map $f':\Gamma'\to\Gamma'$ where the
$i$-th petal of $\Gamma'$ gets mapped according to the word $u_i$. Then $||f'||=|\Phi|_A$ and $f'$ is a standard topological representative of (the outer automorphism class of) $\phi$.
Applying the result of part (1) to $f'$ we obtain the desired conclusion.

(3) The statement of part (3) follows from part (1).

Again, we take $\Gamma'$ to be an $N$-rose. If $S=\{\sigma_1,\dots,\sigma_k\}$
and $F_N=F(a_1,\dots,a_N)$, we put $C=\max_{i=1}^N
|\sigma_i(a_i)|_A$. We first compute the freely reduced words
$u_1,\dots, u_N$ over $A=\{a_1,\dots,a_N\}$ where
$u_i=\phi(a_i)$. Since $\phi$ is represented by a word $w$ over
$S^{\pm 1}$, we have $|u_i|_A\le C^{|w|}$ for $i=1,\dots N$.

Then construct a standard graph map $f':\Gamma'\to\Gamma'$ where the
$i$-th petal of $\Gamma'$ gets mapped according to the word $u_i$.  Note that $||f'||\le C^{|w|}$.
Now applying the result of part (1) to $f'$ we obtain the desired
conclusion for (3).
\end{proof}

\appendix

\begin{absolutelynopagebreak}
\section{The complexity of the Bestvina-Handel algorithm}
\medskip
\centerline{\em by Mark C. Bell}
\medskip
\centerline{University of Illinois}
\centerline{\url{mcbell@illinois.edu}}
\bigskip

Recall that a \emph{topological representative} of an outer automorphism $\phi \in \Out(F_N)$ is a homotopy equivalence $f : \Gamma \to \Gamma$ taking vertices to vertices, where $\Gamma$ is a finite connected graph whose fundamental group has been identified with $F_N$ and where each vertex has degree at least three, such that $f$ induces $\phi$ on $\pi_1(\Gamma)$.
For ease of notation, we will say that a topological representative $f$ of $\phi$ is \emph{good} if it is either a train track map or is reducible.
\end{absolutelynopagebreak}

In a seminal 1992 paper~\cite{BH92} Bestvina and Handel gave an algorithm to improve any topological representative $f' : \Gamma' \to \Gamma'$ of $\phi \in \Out(F_N)$ to a good one $f : \Gamma \to \Gamma$.
Hence showing that every irreducible outer automorphism has a train track representative \cite[Theorem~1.7]{BH92}.
Here we provide an explicit polynomial bound on the running time of their algorithm:

\begin{thm}[Complexity of the Bestvina-Handel algorithm]
\label{thm:BH92}
Fix an integer $N \ge 2$.
Given a topological representative $f'$ of $\phi \in \Out(F_N)$, the Bestvina--Handel algorithm terminates in polynomial time in $||f'||$.
\end{thm}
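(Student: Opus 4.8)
The plan is to track the Bestvina--Handel algorithm step by step, bound how much the complexity (a suitable nonnegative integer invariant, measuring essentially the number of edges together with the total edge-length $||f||$ of the current representative, lexicographically ordered) can change at each move, and bound the total number of moves. First I would recall the structure of the algorithm from \cite{BH92}: starting from $f':\Gamma'\to\Gamma'$, one repeatedly applies a finite list of elementary operations --- subdivision, valence-one and valence-two homotopies (tightening/removing inessential vertices), collapsing an invariant forest, folding, and, when the transition matrix fails to be Perron--Frobenius in the desired way, either extracting a core/invariant subgraph witnessing reducibility or performing a ``core subdivision and fold'' along an illegal turn --- until the representative is good. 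The key quantity they control is the Perron--Frobenius eigenvalue $\lambda$ of the transition matrix: each ``bad'' move (the folding moves that address illegal turns) does not increase $\lambda$, and the algorithm is organized into finitely many phases within which $\lambda$ is constant and a secondary combinatorial complexity (number of edges, or number of ``illegal turns taken'', etc.) strictly decreases. I would make each of these monotonicity statements quantitative.

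The core steps, in order, are: (i) Give an explicit starting bound: the initial complexity is polynomial in $||f'||$, since $\Gamma'$ has at most $3N-3$ edges and $|f'(e)|\le ||f'||$. (ii) For each elementary move, bound the new value of $||f||$ and of the edge count in terms of the old ones --- subdivisions can be controlled so that the number of new edges introduced is at most the length of the image edge-path being subdivided along, foldings and tightenings do not increase $||f||$, and collapsing an invariant forest strictly decreases the edge count without increasing $||f||$. The point is that throughout the run the number of edges stays $O(N)$ (this is where the hypothesis that $N$ is fixed and all vertices have valence $\ge 3$ is used, bounding the graph size uniformly) and $||f||$ stays polynomially bounded in $||f'||$. (iii) Bound the number of moves: use that $\lambda\ge 1$ is an algebraic integer of bounded degree (bounded in terms of $N$) whose minimal polynomial has coefficients controlled by the matrix entries, so $\lambda$ can take only ``few'' values in the relevant range, and within a phase of constant $\lambda$ a strictly decreasing integer complexity bounded by a polynomial in $||f'||$ forces at most polynomially many moves; across phases, $\lambda$ is strictly decreasing from a value $\le \lambda(f')$ which itself is $O(||f'||)$, and a lower gap estimate (from Mahler-measure / minimal-polynomial considerations for PF eigenvalues of nonnegative integer matrices of bounded size) bounds the number of phases by a polynomial. (iv) Multiply: (number of moves) $\times$ (cost per move, which is polynomial since every object manipulated has size polynomial in $||f'||$) gives the desired polynomial running-time bound.

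The main obstacle I expect is step (iii), controlling the total number of iterations --- in particular ruling out a long sequence of moves in which neither $\lambda$ decreases nor the obvious combinatorial complexity decreases, and quantifying the ``gap'' between consecutive possible values of $\lambda$. The right tool is that $\lambda$ is the PF eigenvalue of an integer matrix of size $O(N)$ with entries bounded polynomially in $||f'||$, so its minimal polynomial has bounded degree and polynomially bounded height; two distinct such algebraic integers lying in $[1,\lambda(f')]$ cannot be exponentially close --- their difference is bounded below by a reciprocal polynomial in the heights, hence by $1/\poly(||f'||)$ --- and since at each $\lambda$-decreasing move $\lambda$ is replaced by a strictly smaller PF eigenvalue of the same type, there can be only polynomially many such moves. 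A secondary subtlety is making precise that the non-$\lambda$-decreasing moves are finite in number within a phase: here one invokes the Bestvina--Handel bookkeeping (e.g. that ``illegal turns taken'' or the number of ``gates'' strictly drops, or that an invariant forest appears), and one must check that the relevant monotone integer quantity is bounded by a polynomial in $||f'||$, which follows from the edge-count and $||f||$ bounds already established in steps (i)--(ii). Once these quantitative refinements of the classical convergence proof are in place, the polynomial time bound follows by straightforward accounting.
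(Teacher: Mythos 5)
There is a genuine gap, and it sits exactly where the paper flags the difficulty. Your step (ii) asserts that a move-by-move accounting keeps $||f_i||$ polynomially bounded throughout the run (``foldings and tightenings do not increase $||f||$'', subdivisions are ``controlled''). But the honest per-move estimate is only $||f_{i+1}|| \leq \poly(||f_i||)$ (subdivision and valence-two homotopies genuinely increase image lengths), and compounding such bounds over the polynomially many moves you allow in step (iii) yields an exponential, not polynomial, bound on the intermediate $||f_i||$. The appendix states this explicitly: the per-move bounds ``alone are not sufficient.'' The missing idea is a uniform a priori bound on the intermediate representatives that does not depend on how many moves have been made. The paper gets this from Proposition~\ref{prop:lower_bound}: for an irreducible matrix $M \in \NN_0^{n\times n}$ one has $||M|| \leq \sum_{i,j} m_{ij} \leq n\,\lamPF(M)^{n+1}$. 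Since the stretch factor only decreases along the run and $\lamPF(M_0)\leq N||f'||$ by Gershgorin, every transition matrix $M_i$ satisfies $||M_i|| \leq R\,(N||f'||)^{R+1}$ with $R=3N-3$, hence $||f_i||$ is polynomially bounded uniformly in $i$. Without this (or an equivalent) inequality your argument does not close.

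The gap propagates into your step (iii): the algebraic-number separation estimate you invoke needs the minimal polynomial of $\lamPF(M_i)$ to have polynomially bounded height, which requires precisely the uniform entry bound you have not established; and the bound on the secondary complexity within a constant-$\lambda$ phase likewise rests on the unproved $||f_i||$ bound. Note also that once the entry bound is in hand, the paper counts the number of steps far more simply: the matrices $M_0,\dots,M_{K-1}$ are pairwise distinct (strictly decreasing $\lamPF$) and all lie in $\PF(R, N||f'||)$, whose cardinality is at most $R\,(R\lambda^{R+1}+1)^{R^2}$ with $\lambda = N||f'||$ (Corollary~\ref{cor:PF_bound}), so $K$ is polynomial in $||f'||$ with no Mahler-measure or root-separation input at all. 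So your outline is salvageable only after importing the key inequality of Proposition~\ref{prop:lower_bound}, at which point the heavier number-theoretic machinery becomes unnecessary.
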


\begin{cor}
Fix an integer $N \geq 2$ and let $S$ be a finite generating set of $\Out(F_N)$.
There is an algorithm that, given a word $w$ over $S^{\pm 1}$ corresponding to $\phi \in \Out(F_N)$, finds a good topological representative of $\phi$.
Moreover, this algorithm terminates in exponential time in $|w|$.
\end{cor}

\begin{proof}
Fix a free basis $A = \{a_1, \dots, a_N\}$ of $F_N$.
We first compute the freely reduced words $u_1, \dots, u_N$ over $A$ where $u_i = \phi(a_i)$.
Since $\phi$ is given by a word $w$ over $S^{\pm 1}$, we have that $|u_i|_A \leq C^{|w|}$ where
\[ C \defeq \max_{\sigma \in S} \max_{a_i \in A} |\sigma(a_i)|_A \]
is a constant that depends only on $A$ and $S$.

Now let $\Gamma'$ to be the $N$--rose with the petals marked by $a_1, \dots, a_N$.
Let $f' : \Gamma' \to \Gamma'$ be the \emph{standard} topological representative of $\phi$ where the $i$-th petal of $\Gamma'$ gets mapped according to the word $u_i$.
Note that the bounds on $|u_i|_A$ imply that $||f'|| \leq C^{|w|}$ and so the result now follows by applying Theorem~\ref{thm:BH92} to $f'$.
\end{proof}

The Bestvina--Handel algorithm works by producing a sequence of topological representatives $f_0, \ldots f_K$, starting with $f_0 = f'$.
The representative $f_{i+1}$ is computed from $f_i$ by applying one of four basic operations \cite[Section~1]{BH92}.
These operations ensure each topological representative is better than the last: the stretch factor of $f_{i+1}$ is strictly less than the stretch factor of $f_i$.
Bestvina and Handel showed that when this procedure terminates --- when no such operation can improve the topological representative --- the map $f_K$ is good \cite[Theorem~1.7]{BH92}.

Examination of the different possible transformations shows that $f_{i+1}$ can be computed from $f_i$ in $\poly(||f_i||)$ time and that $||f_{i+1}|| \leq \poly(||f_i||)$.
However these bounds alone are not sufficient to conclude that the Bestvina--Handel algorithm runs in polynomial time.
Thus we prove Theorem~\ref{thm:BH92} by showing that $K$ and $||f_i||$ are bounded by polynomial functions of $||f'||$.
We achieve this by considering the transition matrices $M_i$ associated to $f_i$.

Recall that a matrix $M \in \NN_0^{n \times n}$ is \emph{irreducible} if for each $i$ and $j$ there exists $k$ such that $(M^k)_{ij} \neq 0$ \cite[Page~671]{Meyer}.
We may think of a matrix as the adjacency matrix of a directed graph.
Doing so we obtain that a matrix is irreducible if and only if its corresponding digraph is strongly connected \cite[Page~671]{Meyer}.
From this we deduce that we may strengthen the definition of a matrix being irreducible to also include that $1 \leq k \leq n$.

An irreducible matrix $M$ has an associated \emph{Perron--Frobenious eigenvalue} $\lamPF(M)$.
This is equal to its \emph{spectral radius} $\rho(M) \geq 1$ and is the largest eigenvalue of $M$ in absolute value \cite[Page~673]{Meyer}.

For a matrix $M = (m_{ij})$, let $||M|| \defeq \max(m_{ij})$ denote the maximum of the entries of $M$.
By the Gershgorin circle theorem \cite{Gerschgorin}, we immediately obtain an upper bound on $\lamPF(M)$:

\begin{lem}
\label{lem:upper_bound}
If $M \in \NN_0^{n \times n}$ is irreducible then
\[ \lamPF(M) \leq n \cdot ||M||. \inlineQED \]
\end{lem}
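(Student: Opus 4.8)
The plan is to bound $\lamPF(M)$ via the Gershgorin circle theorem. First I would recall the statement of Gershgorin's theorem: every eigenvalue $\lambda$ of an $n\times n$ matrix $M=(m_{ij})$ lies in at least one of the Gershgorin discs, i.e. there is an index $i$ with $|\lambda - m_{ii}| \le \sum_{j\ne i} |m_{ij}|$. Applying this to the Perron--Frobenius eigenvalue $\lamPF(M)$, which is real and equals the spectral radius $\rho(M)$, we get an index $i$ with $\lamPF(M) \le m_{ii} + \sum_{j\ne i} |m_{ij}| = \sum_{j=1}^n m_{ij}$, the $i$-th row sum. Since $M \in \NN_0^{n\times n}$, each entry satisfies $0 \le m_{ij} \le \|M\|$, so each row sum is at most $n\cdot\|M\|$, giving $\lamPF(M) \le n\cdot\|M\|$ as desired. (Here I am using that $m_{ii}\ge 0$, so the absolute value in the Gershgorin bound just contributes $m_{ii}$ rather than splitting into cases.)

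Concretely, the argument proceeds in the following steps. Step one: invoke the fact, already recalled in the excerpt, that for an irreducible $M$ the Perron--Frobenius eigenvalue $\lamPF(M)$ coincides with the spectral radius $\rho(M)$ and is in particular an honest eigenvalue of $M$, so Gershgorin applies to it. Step two: apply Gershgorin to produce a row index $i$ witnessing $\lamPF(M) - m_{ii} \le \sum_{j\ne i} m_{ij}$, where I can drop absolute values since all entries are nonnegative. Step three: rearrange to bound $\lamPF(M)$ by the $i$-th row sum $\sum_j m_{ij}$. Step four: bound this sum crudely by $n\cdot\|M\|$ using the entrywise bound $m_{ij}\le \|M\|$ together with the fact that the sum has $n$ terms.

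I do not expect any real obstacle here: this is a textbook consequence of Gershgorin, and the nonnegativity of the entries makes the bookkeeping with absolute values trivial. The only point worth a word of care is making sure the irreducibility hypothesis is actually used in the ``right'' way — it is used only to guarantee that $\lamPF(M)$ is well-defined and equals $\rho(M)$ (so that it is an eigenvalue, hence subject to Gershgorin); the bound $\lamPF(M)\le n\|M\|$ would in fact hold for the spectral radius of any nonnegative matrix, but stating it for irreducible $M$ is all that is needed downstream. Thus the proof is essentially two lines: quote Gershgorin, then estimate the row sum by $n\cdot\|M\|$.

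\begin{proof}
Since $M$ is irreducible, its Perron--Frobenius eigenvalue $\lamPF(M)$ equals the spectral radius $\rho(M)$ and is in particular an eigenvalue of $M$. By the Gershgorin circle theorem~\cite{Gerschgorin}, there is an index $i$ with
\[
\lamPF(M) \le m_{ii} + \sum_{j \ne i} m_{ij} = \sum_{j=1}^n m_{ij},
\]
where we have used that all entries of $M$ are nonnegative. Since $m_{ij} \le \|M\|$ for all $i,j$, the $i$-th row sum is at most $n \cdot \|M\|$, and therefore $\lamPF(M) \le n \cdot \|M\|$.
\end{proof}
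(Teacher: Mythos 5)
Your proof is correct and takes essentially the same approach as the paper, which presents the lemma as an immediate consequence of the Gershgorin circle theorem (with an inline QED and no further detail). Your write-up simply spells out that one-line argument: Gershgorin bounds $\lamPF(M)$ by a row sum, and nonnegativity of the entries bounds each row sum by $n\cdot\|M\|$.
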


The main tool powering this section is a corresponding lower bound.

\begin{prop}
\label{prop:lower_bound}
If $(m_{ij}) = M \in \NN_0^{n \times n}$ is an irreducible matrix then
\[ ||M|| \leq \sum_{i,j=1}^n m_{ij} \leq n \lambda^{n+1} \]
where $\lambda \defeq \lamPF(M)$.
\end{prop}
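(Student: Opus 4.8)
The first inequality, $\|M\| \le \sum_{i,j} m_{ij}$, is immediate: every entry of $M$ is a nonnegative integer, so the largest entry is at most the sum of all of them. For the upper bound I would prove the slightly stronger inequality $\sum_{i,j} m_{ij} \le n\lambda^{n}$, which suffices since $\lambda = \lamPF(M) \ge 1$, working with the Perron--Frobenius eigenvector of $M$. Because $M$ is irreducible, Perron--Frobenius theory \cite{Meyer} supplies a right eigenvector $v$ with all entries strictly positive and $Mv = \lambda v$; rescale $v$ so that $\min_i v_i = 1$ and set $V \defeq \max_i v_i$.

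The key step is the estimate $V \le \lambda^{n-1}$. To prove it, view $M$ as the adjacency matrix of a digraph on $\{1, \dots, n\}$, which is strongly connected since $M$ is irreducible. Fix a vertex $i_1$ with $v_{i_1} = 1$ and a vertex $i_0$ with $v_{i_0} = V$, and choose a directed path $i_1 = u_0 \to u_1 \to \cdots \to u_\ell = i_0$ with $\ell \le n-1$ (if $i_0 = i_1$ the estimate is trivial). At each step there is an edge $u_t \to u_{t+1}$, so $m_{u_t, u_{t+1}} \ge 1$ by integrality, whence
\[
\lambda v_{u_t} = (Mv)_{u_t} = \sum_{k=1}^n m_{u_t, k} v_k \ge m_{u_t, u_{t+1}} v_{u_{t+1}} \ge v_{u_{t+1}}.
\]
Iterating along the path yields $V = v_{u_\ell} \le \lambda^\ell v_{u_0} = \lambda^\ell \le \lambda^{n-1}$.

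To finish, using $v_j \ge 1$ for all $j$,
\[
\sum_{i,j=1}^n m_{ij} \le \sum_{i,j=1}^n m_{ij} v_j = \sum_{i=1}^n (Mv)_i = \lambda \sum_{i=1}^n v_i \le \lambda\, n V \le n\lambda^n \le n\lambda^{n+1},
\]
as claimed. I expect the one genuinely nontrivial point to be the eigenvector-spread bound $V \le \lambda^{n-1}$: this is where both irreducibility (through strong connectivity, which produces a return path of length $< n$) and integrality of the entries (through $m_{u_t,u_{t+1}} \ge 1$) enter, and it is what forces the exponent to scale with $n$. A purely combinatorial alternative --- dominating $\sum m_{ij}$ by $\sum_{L=1}^{n} \mathrm{tr}(M^L)$ via return-path arguments and then using $\mathrm{tr}(M^L) \le n\lambda^L$ --- is available but gives a strictly weaker bound (it degrades to $n^2$ when $\lambda = 1$), so the eigenvector argument is the one I would use.
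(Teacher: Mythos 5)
Your proof is correct, and its key estimate differs from the one in the paper. Both arguments run through the Perron--Frobenius eigenvector and both finish with essentially the same summation of $Mv=\lambda v$, but they control the spread of the eigenvector entries differently. The paper sums the eigen-equations for the powers $M,M^2,\dots,M^n$ and uses irreducibility in the form ``for all $i,j$ there is $k\le n$ with $(M^k)_{ij}\ge 1$'' to get $n\lambda^n x_i\ \ge\ x_1+\cdots+x_n$ for every $i$, i.e.\ an implicit spread bound of order $n\lambda^n$, which after substitution yields $\sum_{i,j}m_{ij}\le n\lambda^{n+1}$. You instead propagate the single-step inequality $\lambda v_{u_t}\ge m_{u_t,u_{t+1}}v_{u_{t+1}}\ge v_{u_{t+1}}$ along an explicit shortest directed path (length $\le n-1$) from a minimal to a maximal entry, obtaining the sharper spread bound $\max_i v_i/\min_i v_i\le\lambda^{n-1}$ and hence the slightly stronger conclusion $\sum_{i,j}m_{ij}\le n\lambda^{n}$, which dominates the stated bound because $\lambda\ge 1$. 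The two proofs use exactly the same two ingredients --- strong connectivity of the digraph of $M$ and integrality (a nonzero entry is $\ge 1$) --- the paper packaging the path argument algebraically via powers of $M$, and you making the path explicit; your version buys a cleaner mechanism and a marginally better exponent, while the paper's version avoids invoking strict positivity of the eigenvector (it only needs a nonnegative one) and any graph-theoretic language. Either bound is more than sufficient for the application to Corollary~\ref{cor:PF_bound} and Theorem~\ref{thm:BH92}, where only polynomial dependence on $\lambda$ for fixed $n$ matters.
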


\begin{proof}
Let $(x_1 \; \cdots \; x_n)^T \neq 0$ be a real eigenvector of $M$ corresponding to $\lambda$ such that $x_i \geq 0$ \cite[Equation~8.3.2]{Meyer}.
For ease of notation let $m_{ij}^k \defeq (M^k)_{ij}$.
Now note that as $M$ is irreducible for each $i$ and $j$ there is a $1 \leq k \leq n$ such that $m_{ij}^k \geq 1$.
From this we observe that for each $i$ we have that
\begin{eqnarray*}
n \lambda^{n} x_i &\geq& (\lambda^n + \cdots + \lambda) x_i \\  
 &=& \sum_{k=1}^n \sum_{j=1}^n m_{ij}^k x_j \\
 &\geq& x_1 + \cdots + x_n.
\end{eqnarray*}
Thus
\begin{eqnarray*}
\lambda (x_1 + \cdots + x_n) &=& \sum_{i=1}^n \lambda x_i \\
 &=& \sum_{i,j=1}^n m_{ij} x_j \\
 &\geq& \frac{x_1 + \cdots + x_n}{n \lambda^{n}} \sum_{i,j=1}^n m_{ij}.
\end{eqnarray*}
By cancelling the $x_1 + \cdots + x_n \neq 0$ term on each side and rearranging this inequality we obtain
\[ \sum_{i,j=1}^n m_{ij} \leq n \lambda^{n+1} \]
as required.
The other inequality follows trivially.
\end{proof}

Now let $\PF(n, \lambda)$ denote the set of $m \times m$ irreducible matrices, where $m \leq n$, with entries in $\NN_0$ and Perron--Frobenious eigenvalue at most $\lambda$.
As an immediate consequence of Proposition~\ref{prop:lower_bound} we obtain the following bound on the cardinality of $\PF(n, \lambda)$:

\begin{cor}
\label{cor:PF_bound}
There are at most $n (n \lambda^{n + 1} + 1)^{n^2}$ matrices in $\PF(n, \lambda)$.
In particular, when $n$ is fixed this bound is polynomial in $\lambda$. \qed
\end{cor}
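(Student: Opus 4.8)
The plan is to bound the number of matrices in $\PF(n,\lambda)$ by simply counting, using the entrywise bound supplied by Proposition~\ref{prop:lower_bound}. First I would fix the size $m$ of the matrix, where $1 \le m \le n$ by definition of $\PF(n,\lambda)$. For a fixed $m\times m$ irreducible matrix $M=(m_{ij})\in \PF(n,\lambda)$ with Perron--Frobenius eigenvalue $\lambda(M)\le \lambda$, Proposition~\ref{prop:lower_bound} gives
\[
m_{ij}\le \sum_{k,l=1}^m m_{kl}\le m\,\lambda(M)^{m+1}\le m\,\lambda^{m+1}\le n\,\lambda^{n+1}
\]
for every entry, where in the last inequality I use $m\le n$ and $\lambda\ge 1$ (recall $\lambda=\rho(M)\ge 1$ for an irreducible matrix with nonnegative integer entries). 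Hence each of the $m^2$ entries of $M$ is a nonnegative integer in the range $\{0,1,\dots, n\lambda^{n+1}\}$, which has exactly $n\lambda^{n+1}+1$ elements.

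Next I would assemble the count. For a fixed $m$ there are therefore at most $(n\lambda^{n+1}+1)^{m^2}\le (n\lambda^{n+1}+1)^{n^2}$ matrices in $\PF(n,\lambda)$ of that size (this is a crude overcount, since it ignores the irreducibility constraint, but an upper bound is all we need). Summing over the $n$ possible values $m=1,\dots,n$ gives
\[
|\PF(n,\lambda)|\le n\,(n\lambda^{n+1}+1)^{n^2},
\]
which is the claimed bound. Finally, for the ``in particular'' clause: when $n$ is fixed, $n\lambda^{n+1}+1$ is a polynomial in $\lambda$ of degree $n+1$, and raising it to the fixed power $n^2$ and multiplying by the constant $n$ keeps it polynomial in $\lambda$ (of degree $n^2(n+1)$).

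This argument is entirely routine; there is essentially no obstacle, since all the work is already done in Proposition~\ref{prop:lower_bound}. The only point requiring a moment's care is making sure the per-entry bound $n\lambda^{n+1}$ is uniform over all sizes $m\le n$ and over all matrices with eigenvalue $\le\lambda$ — this is where the inequalities $m\le n$ and $\lambda\ge 1$ are used to pass from $m\lambda(M)^{m+1}$ up to $n\lambda^{n+1}$. Everything else is elementary counting.
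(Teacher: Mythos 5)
Your argument is correct and is exactly the one the paper intends: Corollary~\ref{cor:PF_bound} is stated there as an immediate consequence of Proposition~\ref{prop:lower_bound}, namely the entrywise bound $m_{ij}\le n\lambda^{n+1}$ (using $m\le n$ and $\lambda\ge 1$) followed by the routine count over sizes $m=1,\dots,n$. The only cosmetic point is that $\lambda$ need not be an integer, so the entries lie in $\{0,1,\dots,\lfloor n\lambda^{n+1}\rfloor\}$, a set of at most (not exactly) $n\lambda^{n+1}+1$ elements, which does not affect the bound.
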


We now have the tools needed to prove the main theorem:

\begin{proof}[Proof of Theorem~\ref{thm:BH92}]
Let $f_0, \ldots, f_K$ be the sequence of topological representatives produced by the Bestvina--Handel algorithm.
Let $M_i$ be the transition matrix of $f_i$.
Note that $M_0$ is the transition matrix of $f_0 = f'$.
Hence if $M_0$ is reducible then the algorithm terminates immediately and there is nothing to check as $f = f'$.

Otherwise, $M_0$ is irreducible and so
\[ \lamPF(M_0) \leq N ||M_0|| \leq N ||f'|| \]
by Lemma~\ref{lem:upper_bound}.

These matrices have size at most $R \defeq 3N - 3$ and, while it is possible that $M_K$ is reducible, the matrices $M_0, \ldots, M_{K-1}$ are irreducible.
Now $\lamPF(M_{i}) < \lamPF(M_{i-1})$ for each $0 < i < K$.
Thus $M_0, \ldots, M_{K-1} \in \PF(R, N ||f'||)$ and these matrices are all pairwise distinct.
Hence \[ K \leq |\PF(R, N ||f'||)| + 1 \] which is at most a polynomial function of $||f'||$ by Corollary~\ref{cor:PF_bound} as $N$ is fixed.

Furthermore, for each $0 \leq i < K$ the fact that $M_i \in \PF(R, N ||f'||)$ implies that $||M_i|| \leq R (N ||f'||)^{R + 1}$ by Proposition~\ref{prop:lower_bound}.
This shows that for the corresponding topological representatives \[ ||f_i|| \leq R^2 (N ||f'||)^{R + 1} \] which again is a polynomial function of $||f'||$ as $N$ is fixed.

Finally, as noted above \[ ||f_{K}||\leq \poly(||f_{K-1}||) \leq \poly(R^2 (N ||f'||)^{R + 1}). \]
Hence the Bestvina--Handel algorithm terminates in polynomial time in terms of $||f'||$.
\end{proof}

\begin{rem}
\label{rem:poly}
Suppose that $f'$ is a topological representative of $\phi \in \Out(F_N)$ and that $f$ is the good topological representative of $\phi$ produced by the Bestvina--Handel algorithm starting from $f'$.
We highlight that the proof of Theorem~\ref{thm:BH92} also shows that, when $N$ is fixed, $||f||$ is bounded above by a polynomial function of $||f'||$.
\end{rem}

\end{document}